\numberwithin{equation}{section}
\let\oldmarginpar\marginpar
\renewcommand\marginpar[1]{\-\oldmarginpar[\raggedleft\footnotesize #1]%
	{\raggedright\footnotesize\color{red} #1}} 
\newtheorem{theorem}{Theorem}[section]
\newtheorem{lemma}[theorem]{Lemma}
\newtheorem{remark}[theorem]{Remark}
\newtheorem{thm}{Theorem}[section]
\newenvironment{proof}[1][Proof]{\textbf{#1.} }{\hfill\rule{0.5em}{0.5em}}
{\catcode`\@=11\global\let\AddToReset=\@addtoreset
	\AddToReset{equation}{section}
	
	\AddToReset{theorem}{section}

	\title{Local well-posedness of the 1d compressible Navier–Stokes system with rough data}
	\author{
		{\bf Ke Chen,\thanks{E-mail address: kchen18@fudan.edu.cn, Fudan University, 220 Handan Road, Yangpu, Shanghai, 200433, China.}
			~~Ruilin Hu,\thanks {E-mail address: huruilin16@mails.ucas.ac.cn,  Academy of Mathematics and Systems Science, Chinese Academy of Sciences, Beijing, 100190, China.}
			~~Quoc-Hung Nguyen\thanks{E-mail address: qhnguyen@amss.ac.cn, Academy of Mathematics and Systems Science, Chinese Academy of Sciences, Beijing, 100190, China.}}}}
\begin{document}
	\maketitle
	\begin{abstract}
		This paper presents a new approach to the local well-posedness of the 1d compressible Navier–Stokes systems with rough initial data. Our approach is based on establishing some smoothing and Lipschitz-type estimates for the 1d parabolic equation with piecewise continuous coefficients.
	\end{abstract}
	
	\section{Introduction}
	In this paper, we study the compressible Navier--Stokes equations in Lagrangian coordinates, which can be written as (see \cite{Smoller}) 
	\begin{equation}\label{eqcompre}
		\left\{
		\begin{aligned}
			&	v_t-u_x=0,\\
			&	u_t+p_x=\left(\frac{\mu u_x}{v}\right)_x,\\
			&	(e+\frac{1}{2}u^2)_t+({p}u)_x=\left(\frac{\kappa}{v}\theta_x+\frac{\mu}{v}uu_x\right)_x.
		\end{aligned}
		\right.
	\end{equation}
	Here we denote $v$ the specific volume, $u$ the velocity, $p$ the pressure, $e$ the specific internal energy,  $\theta $ the absolute temperature. And $\mu, \kappa>0$ are   viscosity and heat conductivity coefficients. The above equations describe the conservation of mass, momentum and energy, respectively. 
	
	We will consider two systems. The first system is the so-called $p-$system (see \cite{Smoller}), which is a general model of isentropic gas dynamics(when $p(v)=Av^{-\gamma}$). In the isentropic case, the temperature is held constant hence energy must be added to the system, whence the conservation 
	of energy is absent. The $p-$system can be written as  
	\begin{align}\label{inns}
		\left\{
		\begin{aligned}
			&	v_t-u_x=0,\\
			&	u_t+(p(v))_x=(\frac{\mu u_x}{v})_x.
		\end{aligned}
		\right.
	\end{align}
	In this paper we consider a more general system which only requires $p\in W^{2,\infty}$.
	
	Another system studied in this paper is \eqref{eqcompre} for the polytropic ideal gas which has the constitutive relations
	$$
	p(v,\theta)=\frac{K\theta}{v},\ \ \ e=\mathbf{c}\theta,
	$$
	where $K$ and heat capacity $\mathbf{c}$ are both positive constants. The system can be written as 
	\begin{equation}\label{cpns}
		\left\{
		\begin{aligned}
			&	v_t-u_x=0,\\
			&	u_t+(p(v,\theta))_x=\left(\frac{\mu u_x}{v}\right)_x,\\
			&	\theta_t+\frac{p(v,\theta)}{\mathbf{c}}u_x-\frac{\mu}{\mathbf{c}v}(u_x)^2=\left(\frac{\kappa}{\mathbf{c}v}\theta_x\right)_x.
		\end{aligned}
		\right.
	\end{equation}

	Let us give a review of classical results. The study about compressible Navier-Stokes equations in gas dynamics started with Nash in \cite{Nash1958}, where he considered general elliptic and parabolic equations. Then Itaya \cite{Itaya71,Itaya75} solved the compressible Navier-Stokes equations for initial data in H\"{o}lder space. Kazhikhov \cite{Ka77} established a priori estimates and proved the existence of weak and classical solutions. For the equation of 3-d ideal gas, Nishida and Matsumura in \cite{MN1980} applied the energy method to give the existence of a global solution with data in H\"{o}lder space. The result of large-time behavior of global solution was provided by Kawashima \cite{Kawa87}. Using the energy method, Hoff \cite{Hoff92} proved the existence of a global solution for discontinuous data. The results in the book of  Lions \cite{Lions} and the book of Feireisl \cite{Fe} are applicable to a larger space of initial data. 
	
	Recently, Liu and Yu \cite{LiuCAPM} studied the system \eqref{inns} with $BV\cap L^1$ data, under the assumption $p'(v)<0$. Based on exact analysis of the associated  linear equations and their Green’s functions, they initiated a new method, which converts the differential equations into integral equations, and proved that for initial data 
	$v_0,u_0$ satisfying
	\begin{align*}
		\|v_0-1\|_{L^1},\|v_0\|_{BV},\|u_0\|_{L^1},\|u_0\|_{BV}<\delta\ll 1,
	\end{align*}
	there exist $t_\sharp>0$ and $C_\sharp>0$ such that 
	the system \eqref{inns} admits a unique weak solution $(v,u)$ for $t\in(0,t_\sharp)$, and
	\begin{align*}
		\|v(t,\cdot)-1\|_{L^1}+\|v(t,\cdot)\|_{BV}+\|\sqrt{t}u_x(t,\cdot)\|_{L^\infty}\leq 2C_\sharp\delta,\ \ \ \forall t\in(0,t_\sharp).
	\end{align*}
	Moreover, they also established global result for polytropic gases $p(v)=Av^{-\gamma}$ with $1\leq \gamma<e$. 
	Their result was extended to the full compressible Navier--Stokes system \eqref{cpns} by Wang, Yu, and Zhang \cite{Wang}. 
	
	In this paper, we are interested in the local well-posedness of  \eqref{inns} and \eqref{cpns} with rough data, which allows $v_0$ to have a finite jump. The idea to study the elliptic problem with jump coefficients comes from \cite{N1,N2}. More precisely, without loss of generality, let $x=0$ be the jump point. Suppose there exists $\varepsilon>0$ such that 
	\begin{align}\label{jump}
		|v_0(x)-v_0(y)|\leq \delta,\ \ \ \text{if} \ |x-y|\leq \varepsilon \ \text{and}\ xy>0,
	\end{align} 
	for some $\delta>0$ that will be fixed later.
	
	Throughout the paper, we fix two constants $0<\alpha<\gamma\ll1$. Define the norms
	\begin{align*}
		&	\|f\|_{X_T^{\sigma,p}(E)}=\sup_{0<s<T}s^{\sigma}\|f(s)\|_{L^p(E)}+\sup_{0<s<t<T}s^{\sigma+\alpha}\frac{\|f(t)-f(s)\|_{L^p(E)}}{(t-s)^\alpha},\\
		&	\|f\|_{Y_T(E)}=\sup_{0<s<T}\|f(s)\|_{L^\infty(E)}+\sup_{0<s<t<T}s^{\alpha}\frac{\|f(t)-f(s)\|_{L^\infty(E)}}{(t-s)^\alpha},\\
		&\|f\|_{L^p_{T}(E)}=\|f\|_{L^p_{T}(E)}=\left(\int_0^T \int_E |f(t,x)|^p dxdt \right)^\frac{1}{p},\ \ \ \ \ \ \ \text{for any measurable set}\ E\subseteq\mathbb{R},
	\end{align*}
	and we omit the domain if $E=\mathbb{R}$. We write several norms together to denote the sum of them. Moreover, we denote $A\lesssim B$ if there exists a universal constant $C$ such that $A\leq CB$. 
	We will solve the system (\ref{inns}) and \eqref{cpns} by the fixed point theorem in the Banach space equipped with the norms above. We note that the norms $\|\cdot\|_{X_T^{\sigma,p}}$ and $\|\cdot\|_{Y_T}$ contain  H\"{o}lder derivatives in time.
	
	For the system \eqref{inns}, we impose the following conditions for initial data
	\begin{align}\label{lowbd}
		v_0\in L^\infty,\  \inf_xv_0\geq\lambda_0>0
		,\ u_0=\partial_x\bar{u}_0\ \operatorname{with}\ \bar{u}_0\in \dot{C}^{2\gamma}.
	\end{align}
	Simplified versions of our main results are stated in the following theorems.
	\begin{theorem}\label{maininns} Assume $p\in W^{2,\infty}(\mathbb{R})$.
		There exists $\delta_0>0$ such that if initial data $(v_0,u_0)$ satisfies the conditions (\ref{jump}) and (\ref{lowbd}) with $\delta=\delta_0$, then  the system \eqref{inns} admits a unique local solution $(v,u)$ in $[0,T]$ satisfying 
		\begin{align*}
			\inf_{t\in[0,T]}\inf_xv(t,x)\geq \frac{\lambda_0}{2},\ \ \ \|v\|_{Y_T}\leq 2\|v_0\|_{L^\infty},\ \ \ \ \|\partial_xu\|_{X_T^{1-\gamma,\infty}}\leq M,	
		\end{align*}
		for some $T,M>0$.
	\end{theorem}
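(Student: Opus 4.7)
The plan is to solve \eqref{inns} by a fixed-point scheme in an appropriate closed ball of $Y_T$. Given a candidate $v$, freeze the coefficient $\mu/v$ and solve the linear parabolic equation
\begin{equation*}
u_t-\partial_x\!\left(\frac{\mu}{v}\partial_x u\right)=-\partial_x p(v),\qquad u|_{t=0}=\partial_x\bar u_0,
\end{equation*}
then update the specific volume by
\begin{equation*}
\Phi(v)(t,x):=v_0(x)+\int_0^t \partial_x u(s,x)\,ds,
\end{equation*}
which is consistent with $v_t=u_x$. A fixed point of $\Phi$ yields a solution, so the task reduces to verifying contraction on the closed ball
\begin{equation*}
\mathcal B:=\bigl\{v\in Y_T:\ \inf v\ge\tfrac{\lambda_0}{2},\ \|v\|_{Y_T}\le 2\|v_0\|_{L^\infty},\ |v(t,x)-v(t,y)|\le 2\delta_0 \text{ for } |x-y|\le\varepsilon,\, xy>0\bigr\}
\end{equation*}
metrized by $\|\cdot\|_{L^\infty_{t,x}}$.

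The essential input is the smoothing and Lipschitz-type estimate announced in the abstract: for any $v\in\mathcal B$ the linear parabolic equation with piecewise-continuous coefficient $\mu/v$ and distributional source $\partial_x p(v)$ admits a unique solution satisfying
\begin{equation*}
\|\partial_x u\|_{X_T^{1-\gamma,\infty}}\le C\bigl(\|\bar u_0\|_{\dot C^{2\gamma}}+\|p(v)\|_{Y_T}\bigr)\le M,
\end{equation*}
with $M$ chosen only from the data. The weight $s^{1-\gamma}$ is precisely what the heat-type propagator on $\dot C^{2\gamma}$ data dictates when one differentiates in $x$, and the $\alpha$-H\"older part of $X_T^{1-\gamma,\infty}$ encodes parabolic H\"older regularity in time. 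From this bound I derive invariance of $\mathcal B$: since $\partial_t\Phi(v)=\partial_x u$, the estimate $\|\partial_x u(s)\|_{L^\infty}\lesssim M s^{\gamma-1}$ integrates to $\|\Phi(v)(t)-v_0\|_{L^\infty}\lesssim M t^\gamma/\gamma$, and the H\"older-in-$t$ quotient of $\Phi(v)$ reduces to the corresponding integral of the time-H\"older part of $\partial_x u$. For $T$ small these simultaneously preserve the lower bound $\lambda_0/2$, the $L^\infty$ ceiling $2\|v_0\|_{L^\infty}$, and the $\varepsilon$-scale oscillation condition \eqref{jump}, so $\Phi(\mathcal B)\subset\mathcal B$.

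For the contraction, set $\bar u=u^1-u^2$ associated with $v^1,v^2\in\mathcal B$; then $\bar u$ solves
\begin{equation*}
\bar u_t-\partial_x\!\left(\frac{\mu}{v^1}\partial_x\bar u\right)=\partial_x\!\left[\mu\!\left(\tfrac{1}{v^1}-\tfrac{1}{v^2}\right)\partial_x u^2\right]-\partial_x\bigl(p(v^1)-p(v^2)\bigr),\qquad \bar u|_{t=0}=0,
\end{equation*}
and the Lipschitz version of the same linear estimate gives $\|\partial_x\bar u\|_{X_T^{1-\gamma,\infty}}\le C(T)\|v^1-v^2\|_{L^\infty_{t,x}}$ with $C(T)\to 0$ as $T\to 0$, using $p\in W^{2,\infty}$ to bound $p(v^1)-p(v^2)$ and $v^j\ge\lambda_0/2$ to bound $1/v^1-1/v^2$. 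Integrating in time yields $\|\Phi(v^1)-\Phi(v^2)\|_{L^\infty_{t,x}}\le\tfrac12\|v^1-v^2\|_{L^\infty_{t,x}}$ for $T$ small, giving existence; uniqueness in the ball follows from the same difference estimate applied to any two solutions.

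The main obstacle is not the fixed-point loop, which is standard once the linear theory is available, but the smoothing estimate for $u_t=\partial_x(a\,\partial_x u)+\partial_x f$ when $a$ and $f$ are merely piecewise continuous with a jump at $x=0$. Classical parabolic theory does not apply at this regularity, so the strategy must exploit the smallness assumption $\delta_0\ll1$ to run a perturbative argument around the piecewise-constant coefficient, whose fundamental solution can be written explicitly via reflection. The norms $X_T^{\sigma,p}$ and $Y_T$ are tailored to this perturbation theory: they carry H\"older regularity only in time while accepting an $L^\infty$ jump in space, which is exactly what the explicit Green's function of the piecewise-constant problem propagates. Once this linear machinery is in place, Theorem \ref{maininns} follows from the $L^\infty$-perturbation argument sketched above.
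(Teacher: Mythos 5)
There is a genuine gap at the step you yourself call the essential input. You freeze the coefficient at $\mu/v$ for an arbitrary iterate $v$ in your ball $\mathcal B\subset Y_T$ and then invoke a smoothing estimate $\|\partial_x u\|_{X_T^{1-\gamma,\infty}}\lesssim \|\bar u_0\|_{\dot C^{2\gamma}}+\|p(v)\|_{Y_T}$ for that linear problem. No such estimate is available at this level of coefficient regularity, and it is not what the paper proves: the linear input (Lemma \ref{lemma}) is established only for a \emph{time-independent} coefficient $\phi(x)$ with the rigid structure \eqref{conphi} — exactly constant on the $\varepsilon$-neighborhoods of finitely many jump points and Lipschitz ($\|\phi'\|_{L^\infty}\le C_\phi$) elsewhere — because the whole construction (reflection formula \eqref{soforjum} near a jump, kernel-freezing formula \eqref{sofor1} in the interior) hinges on that structure. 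For your $v\in\mathcal B$ the coefficient $\mu/v(t,x)$ is time-dependent, is nowhere Lipschitz in $x$ (elements of $Y_T$ are merely $L^\infty$ in space with time-H\"older control), and is not piecewise constant near the jump, so Lemma \ref{lemma} simply does not apply; the smallness $\delta_0$ alone does not repair this inside your scheme, because your linear solve never isolates a structured reference coefficient to perturb around.

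The paper's actual route restructures the fixed point precisely to avoid the estimate you assume: it builds the fixed reference coefficient $b_{\varepsilon,\eta}$ from $v_0$ (mollified away from the jumps, frozen constant on $[a_n-\varepsilon,a_n+\varepsilon]$, see \eqref{defb}), which does satisfy \eqref{conphi}, iterates in the velocity $w$ on $\mathcal{E}_{T,M}=\{\|\partial_x w\|_{X_T^{1-\gamma,\infty}}\le M\}$, and solves $\partial_t u-\mu(\partial_x u/b_{\varepsilon,\eta})_x=\partial_x\tilde F(v,w)$ with $\tilde F=-p(v)+\mu(\tfrac1v-\tfrac1{b_{\varepsilon,\eta}})\partial_x w$, where $v=v_0+\int_0^t\partial_x w$; the commutator term is controlled by $\|v-b_{\varepsilon,\eta}\|_{Y_T}\,\|\partial_x w\|_{X_T^{1-\gamma,\infty}}\lesssim(\varepsilon_0+T^\gamma M)M$, which is where the smallness in \eqref{jump} enters (Lemma \ref{lemrejum}), and contraction is measured in $\|\partial_x(w_1-w_2)\|_{X_T^{1-\gamma,\infty}}$ rather than in $\|v_1-v_2\|_{L^\infty_{t,x}}$. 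Your closing paragraph gestures at exactly this perturbation-around-piecewise-constant idea, but your scheme as written does not implement it: to make your argument rigorous you must either prove the (much harder, and here unproved) estimate for rough time-dependent coefficients, or reorganize the iteration as above so that the parabolic solve always has the fixed coefficient $b_{\varepsilon,\eta}$. A minor additional point: your claim $C(T)\to0$ in the difference estimate is not right for the term $\partial_x[\mu(\tfrac1{v^1}-\tfrac1{v^2})\partial_x u^2]$, which only yields $CM\|v^1-v^2\|_{L^\infty_{t,x}}$; the smallness in the paper's contraction comes instead from $\|v_1-v_2\|_{Y_T}\lesssim T^\gamma\|\partial_x(w_1-w_2)\|_{X_T^{1-\gamma,\infty}}$, i.e., from integrating in time, so this part of your loop would still close once the linear estimate issue is fixed.
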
 

	For the full N--S system \eqref{cpns}, we  use $L^p$ setting, and set the following norm and corresponding Banach space
	\begin{align*}
		\|(w,\vartheta)\|_{Z_T}:=&\sum_{\star\in\{L^2_{T},X_T^{\frac{1}{2},2},X_T^{\frac{3}{4},\infty}\}}\|\partial_x w\|_{\star} +\sum_{\star\in\{L^2_{T},X_T^{\frac{1}{2},2}\}}\|\vartheta\|_{\star}+\sum_{\star\in\{L^\frac{6}{5}_{T},X_T^{\frac{5}{6},\frac{6}{5}}\}}\|\partial_x \vartheta\|_{\star}.
	\end{align*}
	The above norms have the same scaling in the parabolic setting. The norms $\|\partial_x w\|_{L^2_{T}}$, $\|\vartheta\|_{L^2_{T}}$ and $\|\partial_x \vartheta\|_{L^\frac{6}{5}_{T}}$ are imposed in view of the parabolic structure of the equation of $(u,\theta)$. Other norms $\|\partial_x w\|_{X_T^{\frac{1}{2},2}}, \|\partial_x w\|_{X_T^{\frac{3}{4},\infty}}, \|\vartheta\|_{X_T^{\frac{1}{2},2}}, \|\partial_x \vartheta\|_{X_T^{\frac{5}{6},\frac{6}{5}}}$ are necessary to obtain local estimates of $v$.
	
	We assume the initial data satisfies that
	\begin{align}\label{idcns}
		\inf_xv_0(x)\geq \lambda_0>0,\ \ \	\|v_0\|_{L^\infty}<\infty,\ \ \|u_0\|_{L^2}<\infty,\ \ \|\theta_0\|_{\dot W^{-\frac{2}{3},\frac{6}{5}}}<\infty.
	\end{align}
	The following result is the local well-posedness of system \eqref{cpns}.
	\begin{theorem}\label{maincp}
		There exists $\delta_0>0$ such that if initial data $(v_0,u_0,\theta_0)$ satisfies \eqref{jump} and \eqref{idcns} with $\delta=\delta_0$, then  the system \eqref{cpns} has a unique local solution $(v,u)$ in $[0,T]$ satisfying 
		\begin{align*}
			\inf_{t\in[0,T]}\inf_xv(t,x)\geq \frac{\lambda_0}{2},\ \ \ \|v\|_{Y_T}\leq 2\|v_0\|_{L^\infty},\ \ \ \ \|(u,\theta)\|_{ Z_T}\leq B.	
		\end{align*}
		for some $T, B>0$. 
	\end{theorem}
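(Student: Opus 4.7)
The plan is to prove Theorem \ref{maincp} by a contraction argument in the closed ball
$$
\mathcal{B}_{B,T} = \bigl\{(w,\vartheta)\colon \|(w,\vartheta)\|_{Z_T} \le B\bigr\}
$$
for a suitable $B$ and $T$ small. Given such an iterate, I first reconstruct the specific volume through the mass equation by
$$
v(t,x) = v_0(x) + \int_0^t w_x(s,x)\,ds.
$$
Using $\|w_x\|_{X_T^{3/4,\infty}} \le B$ together with the definition of the norm, one has $\|v(t)-v_0\|_{L^\infty} \lesssim B\, t^{1/4}$ and a H\"older-in-time bound of the same strength. Choosing $T$ small (depending on $\lambda_0$, $\|v_0\|_{L^\infty}$, $B$) then ensures $\inf_x v(t,x)\ge \lambda_0/2$, $\|v\|_{Y_T}\le 2\|v_0\|_{L^\infty}$, and that the piecewise modulus \eqref{jump} is preserved up to doubling of the constant.

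The map $\Phi(w,\vartheta) = (\tilde u,\tilde \vartheta)$ is then defined as the solution of the decoupled linear parabolic system
\begin{align*}
\tilde u_t - \Bigl(\tfrac{\mu}{v}\tilde u_x\Bigr)_x &= -\Bigl(\tfrac{K\vartheta}{v}\Bigr)_x,\\
\tilde \vartheta_t - \Bigl(\tfrac{\kappa}{\mathbf{c} v}\tilde \vartheta_x\Bigr)_x &= -\tfrac{K\vartheta}{\mathbf{c} v}\,w_x + \tfrac{\mu}{\mathbf{c} v}(w_x)^2,
\end{align*}
with initial data $(u_0,\theta_0)$. Because the coefficients $\mu/v$ and $\kappa/(\mathbf{c} v)$ are piecewise H\"older with a single jump at $x=0$ of amplitude $\lesssim \delta$, the smoothing and Lipschitz-type estimates for 1d parabolic equations with piecewise continuous coefficients (the main technical tool of the paper) apply. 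Each summand in the $Z_T$ norm corresponds to a separate such estimate: the homogeneous part of $\tilde u$ is estimated from $u_0\in L^2$ in the norms $\|\partial_x\tilde u\|_{L^2_T \cap X_T^{1/2,2}\cap X_T^{3/4,\infty}}$, while the homogeneous part of $\tilde\vartheta$ is estimated from $\theta_0\in\dot W^{-2/3,6/5}$ in $\|\tilde\vartheta\|_{L^2_T \cap X_T^{1/2,2}}$ and $\|\partial_x\tilde\vartheta\|_{L^{6/5}_T \cap X_T^{5/6,6/5}}$, matching the prescribed scaling.

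For the source terms I expand $(K\vartheta/v)_x = v^{-1}\partial_x\vartheta - v^{-2}(\partial_x v)\vartheta$ and control it using $\vartheta,\partial_x\vartheta$ in the $Z_T$ norms together with $v^{-1}\in Y_T$ (using $v\ge \lambda_0/2$); the quadratic $(w_x)^2$ and the bilinear $\vartheta w_x$ in the $\vartheta$-equation are estimated by H\"older product inequalities exploiting the H\"older-in-time structure built into $X_T^{3/4,\infty}$ and $X_T^{1/2,2}$. The parabolic scaling of $Z_T$ is arranged precisely so that every source estimate gains a strictly positive power of $T$, so for $T$ sufficiently small $\Phi$ maps $\mathcal{B}_{B,T}$ into itself. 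Subtracting two iterates and rerunning the same estimates on the resulting linear problem yields a strict contraction, which gives existence and uniqueness of the fixed point. Finally, inserting the fixed point back into the reconstruction formula returns the full solution $(v,u,\theta)$ with the asserted bounds.

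The main obstacle is the linear theory itself, that is, producing parabolic smoothing and H\"older-in-time estimates for $\partial_t - \partial_x(a\partial_x)$ when $a$ is piecewise H\"older across the fixed interface $x=0$, at exactly the scaling dictated by the borderline initial regularity $\theta_0\in \dot W^{-2/3,6/5}$ and $u_0\in L^2$; this is where the jump amplitude $\delta$ must be taken small, so that the perturbation from a constant-coefficient heat kernel can be absorbed. Once these estimates are available, the nonlinear assembly above is routine H\"older-product bookkeeping.
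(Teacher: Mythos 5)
There is a genuine gap, and it sits exactly at the point you declare to be the main obstacle. First, your iteration freezes the full time-dependent coefficients $\mu/v(t,x)$ and $\kappa/(\mathbf{c}v(t,x))$ in the principal part of the linearized system. The linear theory of this paper (Lemma \ref{lemma}, built from the explicit two-phase kernel of Lemma \ref{lemformula} near the jump and frozen-coefficient approximations away from it) is proved only for a \emph{time-independent} coefficient $\phi(x)$ satisfying \eqref{conphi}, i.e.\ exactly piecewise constant on $[a_n-\varepsilon,a_n+\varepsilon]$ and Lipschitz outside $\mathbb{I}_\varepsilon$. The coefficient $\mu/v(t,x)$ produced by your iterate is neither time-independent nor piecewise constant near $x=0$ (it is only H\"older in time with a singular rate), so the assertion that ``the smoothing and Lipschitz-type estimates \dots apply'' is unsupported; you would need a substantially different linear theory. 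The paper circumvents this by linearizing around the fixed profile $b_{\varepsilon,\eta}$ built from $v_0$ (piecewise constant on the $\varepsilon$-neighborhoods of the jumps), writing $\mu\bigl((\tfrac1v-\tfrac1{b_{\varepsilon,\eta}})\partial_x w\bigr)_x$ and the analogous $\theta$-term as divergence-form sources, and absorbing them using the smallness $\|v-b_{\varepsilon,\eta}\|_{Y_T}\le 2\varepsilon_0$.

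Second, your proposed way to handle the jump — ``the jump at $x=0$ has amplitude $\lesssim\delta$, so the perturbation from a constant-coefficient heat kernel can be absorbed'' — misreads hypothesis \eqref{jump}: the condition $|v_0(x)-v_0(y)|\le\delta$ is imposed only for $xy>0$, i.e.\ on each side of the jump separately, while the jump of $v_0$ across $x=0$ may be of arbitrary finite size. Allowing a large jump is precisely the content of the theorem; if the jump itself were small, the result would reduce to a routine perturbation of the heat equation. Consequently a contraction scheme whose linear estimates rely on smallness of the jump amplitude fails for the admissible data. The correct mechanism, as in the paper, is the exact solution formula for the transmission problem with arbitrary constants $c_\pm$ on the two sides (formulas \eqref{soforjum}--\eqref{formubd3}), whose boundary terms are estimated without any smallness of $c_+-c_-$; the smallness parameter $\delta_0$ (equivalently $\varepsilon_0$) only controls the side-wise oscillation of $v_0$ so that the error terms $(\phi-\bar\phi)f_x$ near the jumps and $(\phi(x)-\phi(y))f_x$ in the interior can be absorbed for small $T$. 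Your reconstruction of $v$, the lower bound $v\ge\lambda_0/2$, the choice of source terms, and the general fixed-point bookkeeping in $Z_T$ are consistent with the paper's scheme, but the argument as proposed does not prove the theorem for the class of initial data it claims.
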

	Note that we do not need $u_0,\theta_0$ decay at infinity. Hence the condition \eqref{idcns} can be relaxed to local $L^p$ norms(see Theorem \ref{thmloc}):
	\begin{align}\label{loccon}
		\inf_xv_0(x)\geq \lambda_0>0,\ \ \	\|v_0\|_{L^\infty}<\infty,\ \ \sup_{z\in\mathbb{R}}\|u_0\chi_z\|_{L^2}<\infty,\ \ \sup_{z\in\mathbb{R}}\|\theta_0\chi_z\|_{\dot W^{-\frac{2}{3},\frac{6}{5}}}<\infty.
	\end{align}
	Here $\chi_z$ is a smooth cutoff function satisfying $\mathbf{1}_{[z-1,z+1]}\leq \chi_z\leq \mathbf{1}_{[z-2,z+2]}$, where $\mathbf{1}$ is the indicator function. 
	
	Our method is based on analysis of heat equation with jump coefficient $\phi(x)$ satisfying \eqref{conphi},
	\begin{equation*}
		\begin{aligned}
			&\partial_t f(t,x) -\partial_x(\phi(x)\partial_xf(t,x))=\partial_x F(t,x)+R(t,x),\\
			&f(0,x)=f_0(x).
		\end{aligned}	
	\end{equation*}
	We convert the differential equations into integral equations and give two different formulas of solution. One formula \eqref{soforjum}  is suitable for estimates near jump points, and another one \eqref{sofor1} behaves well for estimates far from jump points. Our main results rely on Lemma \ref{lemma}.
	
	We make some remarks about the choice of initial data. For the isentropic system \eqref{inns}, we requires $u_0\in \dot B^{-1+2\gamma}_{\infty,\infty}$. Generally in Schauder theory of parabolic/elliptic equations, to control the Lipschitz norm of solution, it requires the coefficient to be at least Dini (see \cite{Jin}). Due to the roughness of coefficient, the space $\dot B^{-1+2\gamma}_{\infty,\infty}$ is optimal in the sense that we can take $\gamma>0$ arbitrarily close to 0. For the full N--S system \eqref{inns}, we require $u_0\in L^2$ and $\theta_0\in \dot W^{-\frac{2}{3},\frac{6}{5}}$. Taking a glance at the equation of $\theta$, we need $(u_x)^2, \theta u_x\in L^1_{T}$ to make $\theta$ well-defined. By classical theory of heat equation, $u_x\in L^2_{T}$ requires $u_0\in L^2$, and $\theta\in L^2_{T}$ requires $\theta_0\in \dot H^{-1}$. Taking force terms into consideration, we finally choose $\theta_0\in\dot W^{-\frac{2}{3},\frac{6}{5}}\subset \dot H^{-1}$.

	The rest of this paper is organized as follows. We introduce some primary properties for heat equations and heat kernel in Section \ref{secpre}. We establish the main lemma \ref{lemma} in Section \ref{secmain}. In Section 4 we apply Lemma \ref{lemma} to obtain local well-posedness for system (\ref{inns}) and prove Theorem \ref{maininns}. Section 5 is devoted to prove local wellposedness for  system (\ref{cpns}) and prove Theorem \ref{maincp}. 
	
	\section{Preliminaries}\label{secpre}
	We introduce the following estimates for the standard heat kernel $\mathbf{K}(t,x)=(4\pi t)^{-\frac{1}{2}}e^{-\frac{x^2}{4t}}$ in $\mathbb{R}$ which satisfies 
	\begin{align*}
		&\partial_t \mathbf{K}(t,x)-\partial_{xx}\mathbf{K}(t,x)=0,\\
		&\lim_{t\to 0^+}\mathbf{K}(t,x)=\mathbf{Dirac}(x).
	\end{align*}
	The following estimates for  heat kernel will be used frequently in our proof.
	\begin{lemma}\label{lemheat}
		There holds 
		\begin{align*}
			&|\partial_t^j\partial_x^m \mathbf{K}(t,x)|\lesssim \frac{1}{(t^\frac{1}{2}+|x|)^{1+2j+m}},\\
			&\left(\int_{0}^{\infty}|\partial_x^m\mathbf{K}(t,x)|^pdt\right)^{\frac{1}{p}}\lesssim |x|^{\frac{2}{p}-1-m},\ \ \ \  \left(\int_{\mathbb{R}}|\partial_x^m\mathbf{K}(t,x)|^pdx\right)^{\frac{1}{p}}\lesssim t^{\frac{1}{2}(\frac{1}{p}-1-m)},\\
			&\left(\int_{\mathbb{R}}|\partial_t^j\partial_x^m \mathbf{K}(t,x)|^p|x|^{\sigma p} dx\right)^\frac{1}{p}\lesssim   \frac{1}{t^{j+{\frac{1}{2}(m-\frac{1}{p}+1-\sigma)}}},\\
			&\left(\int_{\mathbb{R}}|\partial_t^j \partial_x^m\mathbf{K}(t+a,x)-\partial_t^j\partial_x^m \mathbf{K}(t,x)|^p|x|^{\sigma p} dx\right)^\frac{1}{p}\lesssim \frac{1}{t^{j+{\frac{1}{2}(m-\frac{1}{p}+1-\sigma)}}}\min\{1,\frac{a}{t}\},
		\end{align*}
		for any $m,j=0,1,2$, any $\sigma,a,t\geq 0$ and $p\in[1,+\infty]$.
	\end{lemma}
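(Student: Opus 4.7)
The plan is to prove the four estimates in order, with the pointwise bound being the workhorse that implies the next two; the time-difference estimate is then obtained by splitting cases.

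\textbf{Step 1: the pointwise bound.} Direct differentiation of $\mathbf{K}(t,x)=(4\pi t)^{-1/2}e^{-x^2/(4t)}$ shows that $\partial_t^j\partial_x^m\mathbf{K}(t,x)$ has the self-similar form
\begin{equation*}
\partial_t^j\partial_x^m\mathbf{K}(t,x)=t^{-\frac12-j-\frac{m}{2}}\,P_{j,m}\!\left(\tfrac{x}{\sqrt{t}}\right)e^{-x^2/(4t)},
\end{equation*}
where $P_{j,m}$ is a polynomial of degree at most $2j+m$. Using the elementary fact $|P_{j,m}(y)|e^{-y^2/8}\lesssim (1+|y|)^{-N}$ for arbitrary $N$, we may in particular take $N=1+2j+m$, and then a short algebraic manipulation with $y=x/\sqrt t$ shows
\begin{equation*}
t^{-\frac12-j-\frac{m}{2}}(1+|x|/\sqrt{t})^{-(1+2j+m)}=(\sqrt{t}+|x|)^{-(1+2j+m)},
\end{equation*}
which is the first inequality.

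\textbf{Step 2: the three remaining (non-difference) bounds.} Each follows from Step 1 by a one-variable change of scale. For $\int_0^\infty|\partial_x^m\mathbf{K}|^p\,dt$, substitute $t=|x|^2 s^2$; for $\int|\partial_x^m\mathbf{K}|^p\,dx$ and the weighted version, substitute $x=\sqrt{t}\,y$. Each substitution decouples the answer into a pure power of $|x|$ or $t$ times a dimensionless integral $\int(1+|y|)^{-(1+2j+m)p}|y|^{\sigma p}\,dy$ (or its $t$-variable analogue) which is finite in the ranges for which the claimed estimate is itself meaningful. Matching the exponents produces $|x|^{2/p-1-m}$, $t^{\frac{1}{2}(1/p-1-m)}$, and $t^{-j-\frac12(m-1/p+1-\sigma)}$ respectively.

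\textbf{Step 3: the time-difference estimate.} Split into $a\geq t$ and $a<t$. In the regime $a\geq t$, the $\min$ equals $1$, so apply the previous bound separately to $\partial_t^j\partial_x^m\mathbf{K}(t+a,\cdot)$ and $\partial_t^j\partial_x^m\mathbf{K}(t,\cdot)$ and use $(t+a)^{-j-\frac12(m-1/p+1-\sigma)}\le t^{-j-\frac12(m-1/p+1-\sigma)}$ (the exponent is nonpositive in the cases of interest; otherwise the same argument goes through after trivial adjustments). In the regime $a<t$, write
\begin{equation*}
\partial_t^j\partial_x^m\mathbf{K}(t+a,x)-\partial_t^j\partial_x^m\mathbf{K}(t,x)=\int_0^a\partial_t^{j+1}\partial_x^m\mathbf{K}(t+\tau,x)\,d\tau,
\end{equation*}
take the $L^p(|x|^{\sigma p}dx)$ norm, apply Minkowski's inequality, and use Step 2 with $j$ replaced by $j+1$ together with $t+\tau\sim t$ to pick up the extra factor $a\cdot t^{-1}$, which is precisely $\min\{1,a/t\}$ in this regime.

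The main obstacle is purely bookkeeping of exponents: the pointwise bound and the change-of-variable argument are completely standard, but one must verify that the dimensional count in Step 2 gives exactly $2/p-1-m$, $\frac12(1/p-1-m)$, and $-j-\frac12(m-1/p+1-\sigma)$, and that the case $a<t$ in Step 3 really produces the factor $a/t$ rather than some other power of $a/t$.
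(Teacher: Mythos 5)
Your proposal is correct and follows exactly the route the paper intends: the paper omits the proof entirely, remarking only that the estimates "are easy to check by the definition of the heat kernel and the fact that $b^me^{-b}\lesssim_m 1$," and your Steps 1--3 supply precisely those standard details (self-similar form plus Gaussian decay, scaling changes of variable, and the fundamental-theorem-of-calculus-in-time argument that produces the factor $\min\{1,a/t\}$). Your caveat that the dimensionless integrals converge only "in the ranges for which the claimed estimate is meaningful" is the right reading, since the lemma's stated generality (e.g.\ $m=0$, small $p$ in the $dt$-integral) is broader than what converges, but all instances actually used in the paper fall within your convergent range.
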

	It is easy to check the above estimates by the definition of heat kernel and the fact that $b^me^{-b}\lesssim_m 1$, $\forall b>0,m\in\mathbb{N}^+$. We omit details here.
	
	\begin{lemma}\label{lemsob}
		Let $h\in \dot W^{\sigma,p}(\mathbb{R})$ for some $\sigma\in(0,1), p\in[1,\infty)$. Define $\mathbf{h}(x)=h(x)\mathbf{1}_{x\geq 0}+h(-x)\mathbf{1}_{x<0}$ and $\bar{ \mathbf{h}}(x)=(h(x)-h(-x))\mathbf{1}_{x\leq 0}$. Then we have 
		\begin{align*}
			\int_{\mathbb{R}} \int_0^\infty |\mathbf{h}(x)-\mathbf{h}(x-z)|^p\frac{dxdz}{|z|^{1+\sigma p}}+	\int_{\mathbb{R}} \int_0^\infty |\bar{ \mathbf{h}}(x)-\bar{ \mathbf{h}}(x-z)|^p\frac{dxdz}{|z|^{1+\sigma p}}\lesssim \|h\|_{\dot W^{\sigma,p}}^p.
		\end{align*}
	\end{lemma}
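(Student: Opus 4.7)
The plan is to reduce the lemma to the two companion bounds
$\|\mathbf{h}\|_{\dot W^{\sigma,p}}+\|\bar{\mathbf{h}}\|_{\dot W^{\sigma,p}}\lesssim \|h\|_{\dot W^{\sigma,p}}$. The substitution $(x,z)\mapsto(x-z,-z)$ shows that each half-space integral appearing in the statement equals $\tfrac12$ of the Gagliardo double integral over $\mathbb{R}\times\mathbb{R}$ of the corresponding function, so these two bounds suffice.

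For $\mathbf{h}(x)=h(|x|)$ I would split $\|\mathbf{h}\|_{\dot W^{\sigma,p}}^p$ into the four sign-quadrants of $(x,y)$. The two same-sign quadrants reduce directly to subsets of $\|h\|_{\dot W^{\sigma,p}}^p$ via the substitution $(x,y)\mapsto(|x|,|y|)$. In a mixed-sign quadrant, for instance $x\ge 0>y$, one has $|\mathbf{h}(x)-\mathbf{h}(y)|=|h(x)-h(-y)|$ and $|x-y|=x+(-y)$, so the change $y\mapsto -y$ produces the integrand $|h(x)-h(y')|^p/(x+y')^{1+\sigma p}$ on $[0,\infty)^2$. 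Since $x+y'\ge |x-y'|$ there, this is dominated by a subset of $\|h\|_{\dot W^{\sigma,p}}^p$.

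The bound on $\bar{\mathbf{h}}$ is more delicate because $\bar{\mathbf{h}}$ is supported in $(-\infty,0]$ and equals the odd increment $h(x)-h(-x)$ there. Splitting by signs again, the quadrant $x,y\le 0$ is handled by writing $\bar{\mathbf{h}}(x)-\bar{\mathbf{h}}(y)=(h(x)-h(y))-(h(-x)-h(-y))$, applying the triangle inequality, and substituting $(x,y)\mapsto(-x,-y)$ in the second piece. The main obstacle is the mixed-sign quadrant $x>0, y\le 0$, where $\bar{\mathbf{h}}(x)=0$; integrating $x$ out yields
$$\int_{-\infty}^0\!\int_0^\infty\frac{|h(y)-h(-y)|^p}{|x-y|^{1+\sigma p}}\,dx\,dy=\frac{1}{\sigma p}\int_0^\infty\frac{|h(u)-h(-u)|^p}{u^{\sigma p}}\,du,$$
and the other mixed quadrant is symmetric.

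To control this Hardy-type quantity I would use an averaging trick: for every $s\in(-u/2,u/2)$ the triangle inequality gives $|h(u)-h(-u)|^p\lesssim |h(u)-h(s)|^p+|h(s)-h(-u)|^p$, and on this range both $|u-s|$ and $|s+u|$ are comparable to $u$. Averaging in $s$ therefore produces
$$\frac{|h(u)-h(-u)|^p}{u^{\sigma p}}\lesssim \int_{-u/2}^{u/2}\left(\frac{|h(u)-h(s)|^p}{|u-s|^{1+\sigma p}}+\frac{|h(s)-h(-u)|^p}{|s+u|^{1+\sigma p}}\right)ds,$$
and integrating in $u\in(0,\infty)$, with an additional substitution $u\mapsto -u$ for the second summand, bounds each piece by a subset of $\|h\|_{\dot W^{\sigma,p}}^p$. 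This averaging-Hardy step is the only nontrivial part of the argument; everything else is bookkeeping with sign cases and obvious changes of variable.
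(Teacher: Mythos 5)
Your argument is correct, but it is organized differently from the paper's. The paper works directly with the one-sided difference-quotient form of the statement: for $z>0$ it expands $\mathbf{h}(x)-\mathbf{h}(x-z)$ and $\bar{\mathbf{h}}(x)-\bar{\mathbf{h}}(x-z)$ according to the signs of $x$ and $x-z$, inserts $h(x)$ as an intermediate point, and notes that the resulting pairs are separated by $|z|$ or by $|2x-z|\le |z|$ on the relevant range, so the integrand is pointwise dominated by two Gagliardo integrands and the lemma follows after one change of variables — about four lines. You instead symmetrize in $z$ to reduce to the stronger operator statement $\|\mathbf{h}\|_{\dot W^{\sigma,p}}+\|\bar{\mathbf{h}}\|_{\dot W^{\sigma,p}}\lesssim\|h\|_{\dot W^{\sigma,p}}$, decompose into sign quadrants, and isolate the genuinely singular contribution as the weighted Hardy-type quantity $\int_0^\infty |h(u)-h(-u)|^p\,u^{-\sigma p}\,du$, which you control by averaging over intermediate points $s\in(-u/2,u/2)$. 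The analytic core is the same in both proofs — a triangle inequality through an intermediate point whose distances to the two endpoints are comparable to the weight — but your explicit averaging in $s$ replaces the paper's use of the already-present variable $x$ as that intermediate point. What your route buys is modularity: it establishes the reusable fact that the even reflection and the cut-off odd part are bounded maps on $\dot W^{\sigma,p}$, of which the lemma is a corollary (and your quadrant $x,y\le 0$ treats explicitly a case the paper's displayed identity for $\bar{\mathbf{h}}$ only covers for $x\ge 0$); the price is a longer argument than the paper's direct pointwise domination.
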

	\begin{proof}
		By definition, one can check that 
		\begin{align*}
			\frac{|\mathbf{h}(x)-\mathbf{h}(x-z)|^p}{|z|^{1+\sigma p}}&=\mathbf{1}_{x\geq z}	\frac{|{h}(x)-{h}(x-z)|^p}{|z|^{1+\sigma p}}+\mathbf{1}_{x< z}	\frac{|{h}(x)-{h}(z-x)|^p}{|z|^{1+\sigma p}}\\
			&\leq	\frac{|{h}(x)-{h}(x-z)|^p}{|z|^{1+\sigma p}}+	\frac{|{h}(x)-{h}(z-x)|^p}{|2x-z|^{1+\sigma p}},\\
			\frac{|\bar {\mathbf{h}}(x)-\bar {\mathbf{h}}(x-z)|^p}{|z|^{1+\sigma p}}&=\mathbf{1}_{x< z}	\frac{|{h}(x-z)-{h}(z-x)|^p}{|z|^{1+\sigma p}}\\
			&\leq \frac{|{h}(x)-{h}(x-z)|^p}{|z|^{1+\sigma p}}+	\frac{|{h}(x)-{h}(z-x)|^p}{|2x-z|^{1+\sigma p}}.
		\end{align*}
		The result is standard in view of the fact that $$\|h\|_{\dot W^{\sigma,p}}^p= \iint_{\mathbb{R}^2} \frac{|h(x)-h(y)|^p}{|x-y|^{1+\sigma p}}dxdy.
		$$
		This completes the proof.
	\end{proof}

	We introduce the following Schauder type lemma, the idea can be found in  \cite{Peskin}.
	\begin{lemma}\label{mainlem}
		Let $\mathbf{K}$ be the heat kernel, and let
		\begin{align*}
			g(t,x)=\int_0^t\int_{\mathbb{R}}\partial_t \mathbf{K}(t-\tau,x-y) f(\tau,y)dyd\tau.
		\end{align*}
		Then we have
		\begin{align}\label{re}
			\|g\|_{X_T^{\sigma,p}}\lesssim \|f\|_{X_T^{\sigma,p}},\ \ \ \forall T>0,\  \sigma\in(0,1-\alpha),\  p\in[1,\infty].
		\end{align}
	\end{lemma}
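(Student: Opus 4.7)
The plan is to exploit two cancellations of the heat-kernel time derivative. First, $\int_{\mathbb{R}} \partial_t \mathbf{K}(\tau, z) dz = 0$ for every $\tau > 0$ (mass conservation), and second, the fundamental-theorem identity $\int_0^t \partial_t \mathbf{K}(t - \tau, z) d\tau = \mathbf{K}(t, z) - \mathbf{Dirac}(z)$. Splitting $f(\tau, y) = [f(\tau, y) - f(t, y)] + f(t, y)$ inside the defining convolution and integrating the second piece in $\tau$ first produces the decomposition
\begin{align*}
g(t, x) = -f(t, x) + \int_{\mathbb{R}} \mathbf{K}(t, x - y) f(t, y) dy + I(t, x),
\end{align*}
where
\begin{align*}
I(t, x) := \int_0^t \int_{\mathbb{R}} \partial_t \mathbf{K}(t - \tau, x - y) [f(\tau, y) - f(t, y)] dy d\tau.
\end{align*}
The first two summands are immediately controlled in $L^p$ by contractivity of the heat semigroup, so the analysis concentrates on the Schauder-type integral $I$.

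For the weighted $L^p$ norm at a fixed $s \in (0, T)$, the plan is to combine Young's convolution inequality, the bound $\|\partial_t \mathbf{K}(\tau, \cdot)\|_{L^1} \lesssim \tau^{-1}$ from Lemma \ref{lemheat}, and the H\"older-in-time hypothesis $\|f(\tau) - f(s)\|_{L^p} \lesssim \tau^{-\sigma - \alpha}(s - \tau)^\alpha \|f\|_{X_T^{\sigma, p}}$ to obtain
\begin{align*}
\|I(s, \cdot)\|_{L^p} \lesssim \|f\|_{X_T^{\sigma, p}} \int_0^s (s - \tau)^{\alpha - 1} \tau^{-\sigma - \alpha} d\tau = B(\alpha, 1 - \sigma - \alpha) s^{-\sigma} \|f\|_{X_T^{\sigma, p}}.
\end{align*}
The Beta function is finite precisely when $\alpha > 0$ and $\sigma + \alpha < 1$, matching the hypothesis $\sigma \in (0, 1 - \alpha)$. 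Combined with the trivial bounds on the other two summands of the decomposition, this yields control of $\sup_s s^\sigma \|g(s, \cdot)\|_{L^p}$.

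For the H\"older seminorm in time, separate the regimes $t > 2s$ and $t \leq 2s$. The regime $t > 2s$ is essentially free because $(t - s)/s \geq 1$, so the individual $L^p$ bounds on $g(t)$ and $g(s)$ recombine into the target $\|g(t) - g(s)\|_{L^p} \lesssim s^{-\sigma - \alpha}(t - s)^\alpha \|f\|_{X_T^{\sigma, p}}$. In the regime $t - s \leq s$, apply the decomposition at both times. The term $f(t) - f(s)$ is absorbed directly; the semigroup difference is split as $\mathbf{K}(s) \ast [f(t) - f(s)] + [\mathbf{K}(t) - \mathbf{K}(s)] \ast f(t)$, with the second piece bounded by $\int_s^t \|\partial_{xx} \mathbf{K}(\tau, \cdot)\|_{L^1} d\tau \cdot \|f(t)\|_{L^p} \lesssim \log(t/s) t^{-\sigma} \|f\|_{X_T^{\sigma, p}} \lesssim s^{-\sigma - \alpha}(t - s)^\alpha \|f\|_{X_T^{\sigma, p}}$. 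The difference $I(t) - I(s)$ decomposes further into a new-time piece on $[s, t]$ (handled as above with interval length $t - s$), a kernel-difference piece $\int_0^s [\partial_t \mathbf{K}(t - \tau) - \partial_t \mathbf{K}(s - \tau)] \ast [f(\tau) - f(t)] d\tau$ controlled via $\|\partial_t \mathbf{K}(t - \tau, \cdot) - \partial_t \mathbf{K}(s - \tau, \cdot)\|_{L^1} \lesssim (s - \tau)^{-1} \min\{1, (t - s)/(s - \tau)\}$ from Lemma \ref{lemheat}, and a function-difference residual $\int_0^s \partial_t \mathbf{K}(s - \tau) \ast [f(s) - f(t)] d\tau$ whose $\tau$-integration collapses via the same fundamental-theorem identity as in the setup.

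The hardest part will be the bookkeeping for the kernel-difference piece: one must split the $\tau$-integration at the threshold $\tau_\star = 2s - t$ to exploit the two alternatives of the $\min$ separately, and both resulting Beta integrals must combine to produce exactly the weight $s^{-\sigma - \alpha}(t - s)^\alpha$. This is the step in which the sharp restriction $\sigma + \alpha < 1$ becomes essential, as convergence of the inner integrals near $\tau = s$ and $\tau = 0$ degrades precisely at that borderline.
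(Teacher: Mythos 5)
Your setup and your sup-in-time bound are exactly the paper's: the same splitting $f(\tau)=[f(\tau)-f(t)]+f(t)$, the same identity $\int_0^t\partial_t\mathbf{K}(t-\tau)\,d\tau=\mathbf{K}(t,\cdot)-\mathbf{Dirac}$, and the same Beta-function computation; your treatment of the semigroup difference and of the new-time piece on $[s,t]$ is also fine. The genuine gap is in the kernel-difference piece of the H\"older-in-time estimate. You pair $\partial_t\mathbf{K}(t-\tau)-\partial_t\mathbf{K}(s-\tau)$ with $f(\tau)-f(t)$ and propose to close by Young's inequality together with $\|\partial_t\mathbf{K}(t-\tau)-\partial_t\mathbf{K}(s-\tau)\|_{L^1}\lesssim (s-\tau)^{-1}\min\{1,\tfrac{t-s}{s-\tau}\}$. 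But the only information the norm gives on the other factor is $\|f(\tau)-f(t)\|_{L^p}\lesssim (t-\tau)^{\alpha}\tau^{-\sigma-\alpha}\|f\|_{X_T^{\sigma,p}}$, and on the range $\tau\in[2s-t,s]$ (precisely the region above your threshold $\tau_\star$, where the $\min$ equals $1$) this factor is comparable to $(t-s)^{\alpha}s^{-\sigma-\alpha}$ and does \emph{not} vanish as $\tau\to s$, while the kernel-difference factor is genuinely of size $(s-\tau)^{-1}$ there, since $\|\partial_t\mathbf{K}(s-\tau)\|_{L^1}\sim(s-\tau)^{-1}$ dominates $\|\partial_t\mathbf{K}(t-\tau)\|_{L^1}\sim(t-s)^{-1}$. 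Hence your majorant $\int_{2s-t}^{s}(s-\tau)^{-1}(t-\tau)^{\alpha}\tau^{-\sigma-\alpha}\,d\tau$ diverges logarithmically at the endpoint $\tau=s$, and no placement of the threshold can repair an endpoint divergence. (The term itself is finite, but only through the $\tau$-integrated cancellation $\int\partial_t\mathbf{K}\,d\tau$, which a termwise absolute estimate discards.)

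The repair is exactly the paper's grouping: in the kernel-difference integral subtract the \emph{old-time} value, i.e. work with $f(\tau)-f(s)$, so the H\"older factor $(s-\tau)^{\alpha}$ cancels the $(s-\tau)^{-1}$ singularity and the time integral becomes \eqref{intt}; the leftover constant $f(s)-f(t)$ must then be paired with $\int_0^s\partial_t\mathbf{K}(t-\tau)\,d\tau=\mathbf{K}(t)-\mathbf{K}(t-s)$, a bounded convolution, rather than with $\int_0^s\partial_t\mathbf{K}(s-\tau)\,d\tau$ alone as in your residual term. With that single regrouping your argument coincides with the paper's proof (its terms $I_1$, $I_2$, $I_3$); everything else in your proposal is sound, and the case split $t>2s$ versus $t\le 2s$ is harmless but not needed once the terms are grouped this way.
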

	\begin{proof}
		One has	\begin{align*}
			g(t,x)&=\int_0^t\int_{\mathbb{R}}\partial_t \mathbf{K}(t-\tau,x-y) (f(\tau,y)-f(t,y))dyd\tau+\int_0^t\int_{\mathbb{R}}\partial_t \mathbf{K}(t-\tau,x-y) f(t,y)dyd\tau\\
			&:=g_1(t,x)+g_2(t,x).
		\end{align*}
		By Lemma \ref{lemheat} we get 
		\begin{align*}
			\|g_1(t)\|_{L^p}\lesssim \int_0^t \|\partial_t \mathbf{K}(t-\tau)\|_{L^1}\|f(\tau)-f(t)\|_{L^p}d\tau\lesssim \int_0^t  (t-\tau)^{\alpha-1}\tau^{-\sigma-\alpha}d\tau\|f\|_{X_T^{\sigma,p}}\lesssim t^{-\sigma}\|f\|_{X_T^{\sigma,p}}.
		\end{align*}
		Moreover, observe that $\int_0^t \partial_t \mathbf{K}(t-\tau,x-y) d\tau=K(t,x-y)-\mathbf{Dirac}(x-y)$, hence $$g_2(t,x)=\int_{\mathbb{R}} \mathbf{K}(t,x-y)f(t,y)dy-f(t,x).$$
		Then 
		\begin{align*}
			\|g_2(t)\|_{L^p}\lesssim (1+\|\mathbf{K}(t)\|_{L^1})\|f(t)\|_{L^p}\lesssim \|f(t)\|_{L^p}\lesssim t^{-\sigma}\|f\|_{X_T^{\sigma,p}}.
		\end{align*} 
		Thus, we obtain that 
		\begin{align}\label{par1}
			\sup_{s\in[0,T]}s^{\sigma}\|g(s)\|_{L^p}\lesssim \|f\|_{X_T^{\sigma,p}}.
		\end{align}
		In the following we will denote $a=t-s>0$ for convenience, and we denote $\delta_\beta f(s,x)=f(s+\beta,x)-f(s,x)$. We write
		\begin{align*}
			g(t)-	g(s)=&\int_0^s\delta_a\partial_t \mathbf{K}(s-\tau)\ast f(\tau)d\tau+\int_s^t\partial_t \mathbf{K}(t-\tau)\ast f(\tau)d\tau\\
			=&\int_0^s\delta_a\partial_t \mathbf{K}(s-\tau)\ast (f(\tau)-f(s))d\tau+\int_s^t\partial_t \mathbf{K}(t-\tau)\ast (f(\tau)-f(t))d\tau\\
			&+\big(\int_0^s\delta_a\partial_t \mathbf{K}(s-\tau)\ast f(s)d\tau+\int_s^t\partial_t \mathbf{K}(t-\tau)\ast f(t)d\tau\big)\\
			:=&I_1+I_2+I_3.
		\end{align*}
		Applying Lemma \ref{lemheat}, we have 
		\begin{align*}
			\|I_1\|_{L^p_x}&\lesssim \int_0^s \min\{1,\frac{a}{s-\tau}\}\frac{1}{(s-\tau)^{1-\alpha}\tau^{\sigma+\alpha}} d\tau \|f\|_{X_T^{\sigma,p}}\\
			&\lesssim a^\alpha s^{-\sigma-\alpha}\|f\|_{X_T^{\sigma,p}},
		\end{align*}
		where we use the fact that 
		\begin{align}
			&\int_0^s \min\{1,\frac{a}{s-\tau}\}\frac{1}{(s-\tau)^{1-\alpha}\tau^{\sigma+\alpha}} d\tau\nonumber\\
			&\lesssim a^\alpha\int_0^\frac{s}{2}\frac{1}{(s-\tau)\tau^{\sigma+\alpha}}d\tau+s^{-\sigma-\alpha}\int_\frac{s}{2}^s\min\{1,\frac{a}{s-\tau}\}\frac{1}{(s-\tau)^{1-\alpha}}d\tau\lesssim a^\alpha s^{-\sigma-\alpha}.\label{intt}
		\end{align}
		For $I_2$, we have 
		\begin{align*}
			\|I_2\|_{L^p}\lesssim \int_s^t \frac{1}{(t-\tau)^{1-\alpha}}\frac{1}{\tau^{\sigma+\alpha}}d\tau \|f\|_{X_T^{\sigma,p}}\lesssim a^\alpha s^{-\sigma-\alpha}\|f\|_{X_T^{\sigma,p}}.
		\end{align*}
		For $I_3$, we have
		\begin{align*}
			\|I_3\|_{L^p}\lesssim &\|f(s)\ast \mathbf{K}(t-s)-f(s)\ast \mathbf{K}(t)-f(s)+f(s)\ast \mathbf{K}(s)+f(t)-f(t)\ast \mathbf{K}(t-s)\|_{L^p}\\
			\lesssim &\|f(t)-f(s)\|_{L^p}+\|f(s)\ast(\mathbf{K}(t)- \mathbf{K}(s))\|_{L^p}
			\lesssim s^{-\sigma-\alpha}(t-s)^\alpha\|f\|_{X_T^{\sigma,p}},
		\end{align*}
		where we applied Lemma \ref{lemheat} to get $\|\mathbf{K}(t)- \mathbf{K}(s)\|_{L^1}\lesssim \frac{(t-s)^\alpha}{s^\alpha}$ in the last inequality.
		Then we obtain that 
		\begin{align}
			\label{par2}
			\sup_{0<s<t<T}s^{1-\gamma+\alpha}\frac{\|g(t)-g(s)\|_{L^p}}{(t-s)^\alpha}\lesssim \|f\|_{X_T^{\sigma,p}}.
		\end{align}
		By \eqref{par1} and \eqref{par2}, we get \eqref{re}. The proof is complete.
	\end{proof} \\ 
	\begin{remark}\label{rem2.2}
		The result in Lemma \ref{mainlem} is still true for $g(t,x)=\int_0^t\int_{\mathbb{R}}\partial_t\mathbf{G}(t-\tau,x-y,x)f(\tau,y)dy$, where the kernel $\mathbf{G}(t,z,x)$ satisfies
		\begin{align*}
			&\sup_z|\partial_t\mathbf{G}(t,z,x)|\lesssim |\partial_t\mathbf{K}(\zeta t,x)|,\\
			&\sup_z|\partial_t\mathbf{G}(t,z,x)-\partial_t\mathbf{G}(s,z,x)|\lesssim |\partial_t\mathbf{K}(\zeta s,x)-\partial_t\mathbf{K}(\zeta t,x)|,
		\end{align*}
		for a universal constant $\zeta>0$.
	\end{remark}
	\begin{lemma}\label{lemlow}
		Let  $0\leq\beta< 1-\alpha$; and  $\tilde{\sigma}<1$, $1\leq q\leq p\leq\infty$, $1+\frac{1}{p}=\frac{1}{r}+\frac{1}{q}$, $1+\sigma-\tilde{\sigma}-\beta\geq 0$. Suppose that  $G(t,y,x)$ satisfies \begin{align}\label{conG}
			\sup_x\|G(t,\cdot,x)\|_{L^r}\lesssim t^{-\beta},\ \ \  	\sup_x\|G(t,\cdot,x)-G(s,\cdot,x)\|_{L^r}\lesssim s^{-\beta}\min\{1,\frac{t-s}{s}\},
		\end{align}
		for any $0<s<t<T$.
		Then for $g(t,x)=\int_0^t\int G(t-\tau,x-y,x) f(\tau,y)dyd\tau$, we have 
		$$
		\|g\|_{X_T^{\sigma,p}}\lesssim T^{1+\sigma-\tilde{\sigma}-\beta }\|f\|_{X_T^{\tilde{\sigma},q}}.
		$$
	\end{lemma}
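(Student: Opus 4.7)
The strategy mirrors the proof of Lemma \ref{mainlem} but is lighter, because $\sup_x\|G(t,\cdot,x)\|_{L^r}\lesssim t^{-\beta}$ with $\beta<1$ already makes the $y$-convolution integrable in time and no subtraction trick is needed. I would control the two constituents of $\|g\|_{X_T^{\sigma,p}}$ separately.

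First I would bound $\|g(s)\|_{L^p}$ by a Young-type inequality in $y$ driven by $1+\tfrac{1}{p}=\tfrac{1}{r}+\tfrac{1}{q}$ together with the first part of \eqref{conG}, yielding $\|g(s)\|_{L^p}\lesssim\int_0^s(s-\tau)^{-\beta}\|f(\tau)\|_{L^q}\,d\tau$. Substituting $\|f(\tau)\|_{L^q}\le\tau^{-\tilde\sigma}\|f\|_{X_T^{\tilde\sigma,q}}$ and computing the beta integral (convergent thanks to $\beta,\tilde\sigma<1$) gives $\|g(s)\|_{L^p}\lesssim s^{1-\beta-\tilde\sigma}\|f\|_{X_T^{\tilde\sigma,q}}$. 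Multiplying by $s^\sigma$ and invoking $s\le T$ with $1+\sigma-\tilde\sigma-\beta\ge 0$ produces the claimed factor $T^{1+\sigma-\tilde\sigma-\beta}$.

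For the H\"older-in-time piece I set $a=t-s$ and split
\begin{align*}
g(t)-g(s)=\underbrace{\int_0^s\!\!\int[G(t-\tau,x-y,x)-G(s-\tau,x-y,x)]f(\tau,y)\,dyd\tau}_{I_1}+\underbrace{\int_s^t\!\!\int G(t-\tau,x-y,x)f(\tau,y)\,dyd\tau}_{I_2}.
\end{align*}
For $I_1$, the second part of \eqref{conG} combined with $\min\{1,x\}\le x^\alpha$ pulls out $a^\alpha$ and leaves $\int_0^s(s-\tau)^{-\beta-\alpha}\tau^{-\tilde\sigma}d\tau\lesssim s^{1-\beta-\alpha-\tilde\sigma}$, which converges because $\beta+\alpha<1$ and $\tilde\sigma<1$, so $\|I_1\|_{L^p}\lesssim a^\alpha s^{1-\beta-\alpha-\tilde\sigma}\|f\|_{X_T^{\tilde\sigma,q}}$. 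For $I_2$ I distinguish cases: if $a\le s$ then $\tau\sim s$ on $[s,t]$ and $\int_s^t(t-\tau)^{-\beta}d\tau\lesssim a^{1-\beta}\le a^\alpha s^{1-\beta-\alpha}$ produces the same bound; if $a>s$ I bypass the decomposition and use the pointwise $L^p$ estimate from the first paragraph directly, $\|g(t)-g(s)\|_{L^p}\lesssim s^{-\sigma}T^{1+\sigma-\tilde\sigma-\beta}\|f\|_{X_T^{\tilde\sigma,q}}$, so that $s^{\sigma+\alpha}\|g(t)-g(s)\|_{L^p}/a^\alpha\le(s/a)^\alpha T^{1+\sigma-\tilde\sigma-\beta}\|f\|_{X_T^{\tilde\sigma,q}}$, and $s<a$ closes it.

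The one technical point I would flag is the justification of the Young-type inequality for the $x$-dependent kernel $K(x,y):=G(t-\tau,x-y,x)$: the hypothesis supplies $\sup_x\|K(x,\cdot)\|_{L^r}\lesssim(t-\tau)^{-\beta}$ but no symmetric bound in $y$. The endpoint $p=\infty$ is a plain H\"older in $y$, and the endpoint $p=q$ follows from Schur's test once one observes that the $x$-parameter in $G$ only shifts the center of the $y$-convolution; the intermediate $p$ is then obtained by interpolation, so in each application one verifies a matching $\sup_y$-type bound for $G$ and invokes the Young estimate transparently. The remainder is routine beta-function bookkeeping under $\beta<1-\alpha$ and $\tilde\sigma<1$.
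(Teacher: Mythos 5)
Your proposal is correct and follows essentially the same route as the paper: a Young-type bound in $y$ plus the beta-integral in time for $\sup_t t^{\sigma}\|g(t)\|_{L^p}$, and the splitting $g(t)-g(s)=\int_0^s\delta_a G\ast f+\int_s^t G\ast f$ handled with the kernel-difference hypothesis (the paper simply reuses its estimate \eqref{intt} where you argue via $\min\{1,\tfrac{a}{s-\tau}\}\le(\tfrac{a}{s-\tau})^{\alpha}$ and a case split $a\le s$, $a>s$ — an immaterial difference). Your flag about Young's inequality for the $x$-dependent kernel is a fair point of extra care; the paper invokes Young directly, which is justified in its applications because the kernels there are dominated pointwise, uniformly in the parameter slot, by translation-invariant heat-kernel-type functions.
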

	\begin{proof}
		By Young's inequality, 
		\begin{align*}
			\|g(t)\|_{L^p}&\lesssim \int_0^t 	\sup_x\|G(t-\tau,\cdot,x)\|_{L^r}\|f(\tau)\|_{L^q}d\tau\\
			&\lesssim \int_0^t \frac{1}{(t-\tau)^{\beta}}\frac{1}{\tau^{\tilde{\sigma}}}d\tau \|f\|_{X_T^{\tilde{\sigma},q}}\lesssim t^{1-\beta-\tilde{\sigma}} \|f\|_{X_T^{\tilde{\sigma},q}}.
		\end{align*}
		Hence 
		\begin{align}\label{ggg}
			\sup_{t\in[0,T]}t^{\sigma}\|g(t)\|_{L^p}\lesssim T^{1-\beta+\sigma-\tilde{\sigma} }\|f\|_{X_T^{\tilde{\sigma},q}}.
		\end{align}
		Moreover, for any $0<s<t<T$, denote $a=t-s$,
		\begin{align*}
			g(t)-	g(s)=&\int_0^s\int_{\mathbb{R}}\delta_aG(s-\tau,x-y,x) f(\tau,y)dyd\tau+\int_s^t\int_{\mathbb{R}} G(t-\tau,x-y,x) f(\tau,y)dyd\tau.
		\end{align*}
		By \eqref{conG} and Young's inequality, one has 
		\begin{align*}
			\|	g(t)-	g(s)\|_{L^p}&\lesssim \|f\|_{X_T^{\tilde{\sigma},q}}\int_0^s\frac{\min\{1,\frac{a}{s-\tau}\}}{(s-\tau)^{\beta}}\frac{1}{\tau^{\tilde{\sigma}}}d\tau+\|f\|_{X_T^{\tilde{\sigma},q}}\int_s^t\frac{1}{(t-\tau)^{\beta}}\frac{1}{\tau^{\tilde{\sigma}}}d\tau\\
			&\overset{\eqref{intt}}\lesssim a^\alpha s^{-\sigma-\alpha} T^{1-\beta+\sigma-\tilde{\sigma} } \|f\|_{X_T^{\tilde{\sigma},q}}.
		\end{align*}
		Combining this with \eqref{ggg}, we obtain the result.
	\end{proof} 
	\begin{lemma}\label{rem}
		Suppose 
		$$
		g(t,x)=\int_0^t\int_{\mathbb{R}}\mathbf{K}(t-\tau,x-y)R(\tau,y)dyd\tau.
		$$
		Let $l=0,1$. Then for any  $\frac{1}{1+2\alpha}<p<\frac{1}{2\alpha}$ and $\sigma\geq \frac{1+l}{2}-\frac{1}{2p}$,  there holds 
		$$
		\|\partial_x^lg\|_{X_T^{\sigma,p}}\lesssim T^{\sigma-\frac{1+l}{2}+\frac{1}{2p}}(\|R\|_{L_{T}^1}+\|R\|_{X_T^{1,1}}).
		$$
	\end{lemma}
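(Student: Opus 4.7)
The plan is to reduce both pieces of $\|\cdot\|_{X_T^{\sigma,p}}$ applied to $\partial_x^l g$ to the scalar integral $\int (t-\tau)^{-\beta}\|R(\tau)\|_{L^1}\,d\tau$, where $\beta:=\frac{1+l}{2}-\frac{1}{2p}$; the reduction follows by differentiating under the integral, applying Young's inequality to each convolution, and invoking the kernel bound $\|\partial_x^l\mathbf{K}(r,\cdot)\|_{L^p}\lesssim r^{-\beta}$ from Lemma~\ref{lemheat}. The two hypotheses on $R$ play complementary roles in controlling this scalar integral: $\|R\|_{L^1_T}$ handles the contribution where $\tau$ is close to $0$---a regime in which the $X_T^{1,1}$ weight $\tau^{-1}$ is not integrable---while $\|R\|_{X_T^{1,1}}$ gives the pointwise bound $\|R(\tau)\|_{L^1}\lesssim\tau^{-1}$ that is needed where the kernel factor $(t-\tau)^{-\beta}$ becomes singular.

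For the pointwise-in-time $L^p$ estimate, I would split the time integral at $t/2$: on $[0,t/2]$ the kernel factor is $\lesssim t^{-\beta}$ and the remaining integral of $\|R(\tau)\|_{L^1}\,d\tau$ is bounded by $\|R\|_{L^1_T}$; on $[t/2,t]$ one uses $\|R(\tau)\|_{L^1}\lesssim t^{-1}\|R\|_{X_T^{1,1}}$ and $\int_{t/2}^t(t-\tau)^{-\beta}\,d\tau\lesssim t^{1-\beta}$. Adding the two contributions yields $\|\partial_x^l g(t)\|_{L^p}\lesssim t^{-\beta}(\|R\|_{L^1_T}+\|R\|_{X_T^{1,1}})$, and multiplying by $t^\sigma$ together with $\sigma\ge\beta$ produces the target factor $T^{\sigma-\beta}$.

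For the Hölder-in-time semi-norm I would split into the regimes $a:=t-s\le s$ and $a>s$. In the former, use the standard decomposition
\[
\partial_x^l g(t)-\partial_x^l g(s)=\int_0^s\big[\partial_x^l\mathbf{K}(t-\tau,\cdot)-\partial_x^l\mathbf{K}(s-\tau,\cdot)\big]\ast R(\tau)\,d\tau+\int_s^t\partial_x^l\mathbf{K}(t-\tau,\cdot)\ast R(\tau)\,d\tau.
\]
The kernel-difference estimate from Lemma~\ref{lemheat}, $\|\partial_x^l\mathbf{K}(r+a,\cdot)-\partial_x^l\mathbf{K}(r,\cdot)\|_{L^p}\lesssim r^{-\beta}\min\{1,a/r\}\le a^\alpha r^{-\beta-\alpha}$, combined with the same $[0,s/2]\cup[s/2,s]$ splitting as in the previous paragraph, bounds the first integral by $a^\alpha s^{-\beta-\alpha}(\|R\|_{L^1_T}+\|R\|_{X_T^{1,1}})$. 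For the second integral, $\|R(\tau)\|_{L^1}\le s^{-1}\|R\|_{X_T^{1,1}}$ on $[s,t]$ yields $s^{-1}a^{1-\beta}\|R\|_{X_T^{1,1}}$; the crucial manipulation is to write $a^{1-\beta}=a^\alpha\,a^{1-\beta-\alpha}\le a^\alpha s^{1-\beta-\alpha}$, which uses $a\le s$ and requires $1-\beta-\alpha\ge 0$. It is here that the upper hypothesis $p<\frac{1}{2\alpha}$ is used, and it binds only in the $l=1$ case. In the regime $a>s$, I would bypass the difference formula and simply apply the triangle inequality with the $L^p$ bound already obtained: since $a>s$ forces $(t-s)^{-\alpha}<s^{-\alpha}$, the weight $s^{\sigma+\alpha}(t-s)^{-\alpha}$ is dominated by $s^{\sigma}$, and one again recovers $T^{\sigma-\beta}$.

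The main obstacle is the careful bookkeeping that interleaves $\|R\|_{L^1_T}$ and $\|R\|_{X_T^{1,1}}$ in every time splitting, together with the case analysis $a\le s$ versus $a>s$ in the Hölder estimate: this is what prevents a logarithmic loss near $a\sim s$. The role of the constraint $p<\frac{1}{2\alpha}$ is precisely to keep the factor $a^{1-\beta-\alpha}$ non-singular after dividing by $(t-s)^\alpha$.
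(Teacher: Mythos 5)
Your proof is correct and follows essentially the same route as the paper: Young's inequality combined with the heat-kernel bounds of Lemma \ref{lemheat}, splitting the time integral at $t/2$ (resp. $s/2$), and interleaving $\|R\|_{L^1_{T}}$ with $\|R\|_{X_T^{1,1}}$ exactly as the paper does. The only harmless deviations are that near the singularity you use the crude bound $\min\{1,a/r\}\le (a/r)^\alpha$, which requires the strict inequality $p<\frac{1}{2\alpha}$ (the paper's sharper computation of $\int \tau^{\frac{1}{2p}-\frac{1+l}{2}}\min\{1,a/\tau\}d\tau$ only needs the non-strict version), and that you dispatch the regime $t-s>s$ by the triangle inequality applied to the pointwise-in-time bound, whereas the paper estimates $\mathcal{R}_2$ there directly using $\frac{1}{2p}-\frac{1+l}{2}-\alpha<0$.
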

	\begin{proof}
		Note that 
		\begin{align*}
			\partial_x^lg(t,x)=\int_0^t\int_{\mathbb{R}}\partial_x^l\mathbf{K}(t-\tau,x-y)R(\tau,y)dyd\tau.
		\end{align*}
		By Young's inequality and Lemma \ref{lemheat},
		\begin{align*}
			\|\partial_x^lg(t)\|_{L^p}\lesssim \int_0^t \|\partial_x^l \mathbf{K}(t-\tau)\|_{L^p}\|R(\tau)\|_{L^1}d\tau\lesssim \int_0^t (t-\tau)^{\frac{1}{2p}-\frac{1+l}{2}}\|R(\tau)\|_{L^1}d\tau.
		\end{align*}
		We divide the integral into two parts. When $\tau\in[0,{t/2}]$, we have $(t-\tau)^{\frac{1}{2p}-\frac{1+l}{2}}\sim t^{\frac{1}{2p}-\frac{1+l}{2}}$. And when $\tau\in[{t/2},t]$, one has $\|R(\tau)\|_{L^1}\lesssim t^{-1}\|R\|_{X_T^{1,1}}$. This yields that 
		\begin{align}\label{in}
			\|\partial_x^lg(t)\|_{L^p}\lesssim t^{\frac{1}{2p}-\frac{1+l}{2}}(\|R\|_{L^1_{T}}+\|R\|_{X_T^{1,1}}).
		\end{align}
		It remains to check the H\"{o}lder norms. Let $0<s<t<T$ and $a=t-s$, one has
		\begin{align*}
			&\partial_x^l g(t,x)-\partial_x^lg(s,x)\\
			&=\int_0^s\int_{\mathbb{R}}\delta_a\partial_x^l\mathbf{K}(s-\tau,x-y)R(\tau,y)dyd\tau+\int_s^t\int_{\mathbb{R}} \partial_x^l\mathbf{K}(t-\tau,x-y)R(\tau,y)dyd\tau:=\mathcal{R}_1+\mathcal{R}_2.
		\end{align*} 
		By Young's inequality and Lemma \ref{lemheat},
		\begin{align*}
			\|\mathcal{R}_1\|_{L^p}\lesssim \int_0^s\|\delta_a \partial_x^l\mathbf{K}(s-\tau)\|_{L^p}\|R(\tau)\|_{L^1}d\tau\lesssim \int_0^s(s-\tau)^{\frac{1}{2p}-\frac{1+l}{2}}\min\{1,\frac{a}{s-\tau}\}\|R(\tau)\|_{L^1}d\tau.
		\end{align*}
		We discuss $\tau\in[0,s/2]$ and $\tau\in[s/2,s]$ seperately and get that 
		\begin{align*}
			\|\mathcal{R}_1\|_{L^p}\lesssim a^\alpha s^{\frac{1}{2p}-\frac{1+l}{2}-\alpha}\int_0^{s/2}\|R(\tau)\|_{L^1}d\tau+s^{-1}\|R\|_{L^1_{T}}\int_{s/2}^s (s-\tau)^{\frac{1}{2p}-\frac{1+l}{2}} \min\{1,\frac{a}{s-\tau}\}d\tau.
		\end{align*}
		By a change of variable, we can write 
		\begin{align*}
			&\int_{s/2}^s (s-\tau)^{\frac{1}{2p}-\frac{1+l}{2}} \min\{1,\frac{a}{s-\tau}\}d\tau=\int_0^{s/2}\tau^{\frac{1}{2p}-\frac{1+l}{2}} \min\{1,\frac{a}{\tau}\}d\tau\\
			&\lesssim \mathbf{1}_{a\leq  s/2}\left(\int_0^a\tau^{\frac{1}{2p}-\frac{1+l}{2}} d\tau+a\int_a^{s/2}\tau^{\frac{1}{2p}-\frac{1+l}{2}-1} d\tau\right)+\mathbf{1}_{a\geq s/2}\int_0^s\tau^{\frac{1}{2p}-\frac{1+l}{2}} d\tau\\
			&\lesssim \mathbf{1}_{a\leq  s/2} a^{\frac{1}{2p}+\frac{1-l}{2}}+\mathbf{1}_{a\geq s/2}s^{\frac{1}{2p}+\frac{1-l}{2}}\lesssim a^\alpha s^{\frac{1}{2p}+\frac{1-l}{2}-\alpha},
		\end{align*}
		provided $\frac{1}{2p}+\frac{1-l}{2}-\alpha\geq0$. Hence 
		\begin{align}\label{r1}
			\|\mathcal{R}_1\|_{L^p}\lesssim a^\alpha s^{\frac{1}{2p}-\frac{1+l}{2}-\alpha}(\|R\|_{L^1_{T}}+\|R\|_{X_T^{1,1}}).
		\end{align}
		Then we deal with $\mathcal{R}_2$. By Young's inequality,
		$$
		\|\mathcal{R}_2\|_{L^p}\lesssim\int_s^t \| \partial_x^l\mathbf{K}(t-\tau)\|_{L^p}\|R(\tau)\|_{L^1}d\tau\lesssim \int_s^t (t-\tau)^{\frac{1}{2p}-\frac{1+l}{2}}\|R(\tau)\|_{L^1}d\tau.
		$$
		We consider two cases. If $s<t/2$, then one has $a\sim t$. Hence $$
		\int_s^t (t-\tau)^{\frac{1}{2p}-\frac{1+l}{2}}\|R(\tau)\|_{L^1}d\tau\lesssim t^{\frac{1}{2p}-\frac{1+l}{2}}(\|R\|_{L^1_{T}}+\|R\|_{X_T^{1,1}})
		\lesssim a ^{\alpha}s^{\frac{1}{2p}-\frac{1+l}{2}-\alpha}(\|R\|_{L^1_{T}}+\|R\|_{X_T^{1,1}}),
		$$
		provided $\frac{1}{2p}-\frac{1+l}{2}-\alpha<0$.\\
		If $s\geq t/2$, we can write 
		\begin{align*}
			\int_s^t (t-\tau)^{\frac{1}{2p}-\frac{1+l}{2}}\|R(\tau)\|_{L^1}d\tau&\lesssim 	\int_s^t (t-\tau)^{\frac{1}{2p}-\frac{1+l}{2}}d\tau t^{-1}\|R\|_{X_T^{1,1}}\lesssim a^{\frac{1}{2p}+\frac{1-l}{2}}t^{-1}\|R\|_{X_T^{1,1}}\\
			&\lesssim a ^{\alpha}s^{\frac{1}{2p}-\frac{1+l}{2}-\alpha}\|R\|_{X_T^{1,1}}.
		\end{align*}
		We conclude that 
		$$
		\|\mathcal{R}_2\|_{L^p}\lesssim a ^{\alpha}s^{\frac{1}{2p}-\frac{1+l}{2}-\alpha}(\|R\|_{L^1_{T}}+\|R\|_{X_T^{1,1}}).
		$$
		Combining this with \eqref{r1} yields that 
		\begin{align}\label{in2}
			\|\partial_x^l g(t)-\partial_x^lg(s)\|_{L^p}\lesssim (t-s) ^{\alpha}s^{\frac{1}{2p}-\frac{1+l}{2}-\alpha}(\|R\|_{L^1_{T}}+\|R\|_{X_T^{1,1}}).
		\end{align}
		Then we complete the proof in view of \eqref{in} and \eqref{in2}.
	\end{proof}
	\begin{remark}
		The result in Lemma \ref{rem} is still true for $g(t,x)=\int_0^t\int \mathbf{G}(t-\tau,x,x-y)R(\tau,y)dyd\tau$, where $\mathbf{G}(t,x,y)$ satisfies that
		\begin{align*}
			&\sup_{z}\|\partial^l_2\mathbf{G}(t,z,\cdot)\|_{L^p}\lesssim t^{-\frac{1+l}{2}+\frac{1}{2p}},\\
			&\sup_{z}\|\partial^l_2\mathbf{G}(t,z,\cdot)-\partial^l_2\mathbf{G}(s,z,\cdot)\|_{L^p}\lesssim s^{-\frac{1+l}{2}+\frac{1}{2p}}\min\{1,\frac{t-s}{s}\}, \ \text{for}\ l=0,1,\ p\in[1,\infty].
		\end{align*}
	\end{remark}

	\section{Main lemma and its proof}\label{secmain}
	We consider the system \eqref{inns} and the system \eqref{cpns} with jump initial data $v_0$. Observe that the equation for $v$ is hyperbolic, $v(t)$ will have jump at some points for any $t>0$. To deal with $u$ in \eqref{inns} and $(u,\theta)$ in \eqref{cpns}, we need to 
	consider the following parabolic equation with jump coefficient.
	\begin{equation}\label{eqpara}
		\begin{aligned}
			&\partial_t f(t,x) -\partial_x(\phi(x)\partial_xf(t,x))=\partial_x F(t,x)+R(t,x),\\
			&f(0,x)=f_0(x).
		\end{aligned}	
	\end{equation}
	Here the coefficient function satisfies $0<c_0\leq \phi(x)\leq c_0^{-1}, \forall x\in \mathbb{R}$, and there exist $\varepsilon>0$, $\{a_n\}_{n=1}^N\subset\mathbb{R}$ satisfying 
	\begin{equation}\label{conphi}
		\begin{aligned}
			&\min_{i\neq j}|a_i-a_j|\geq 10\varepsilon, \ \ \ \ \ \|\phi'\|_{L^\infty(\mathbb{R}\backslash\mathbb{I}_\varepsilon)}\leq C_\phi,\\
			& \phi(x)= \begin{cases}
				c_n^+,\ \ x\in [a_n,a_n+\varepsilon],\\
				c_n^-,\ \ x\in [a_n-\varepsilon,a_n),
			\end{cases}	\ \ n=1,\cdots,N,
		\end{aligned}
	\end{equation}
	for some $\{c_n^\pm\}_{n=1}^N\subset\mathbb{R}^+$. Here we denote  $\mathbb{I}_\varepsilon=\cup_{n=1}^N[a_n-\varepsilon,a_n+\varepsilon]$.
	\begin{lemma}\label{lemma}Let $f $ be a solution to \eqref{eqpara} with initial data $f_0=\partial_x\bar f_0$. There exists  $T^*=T^*(c_0,C_\phi,\varepsilon)>0$ such that for any $0<T<T^*$,
		\begin{align*}
			\sum_{\star\in\{L^2_{T},X_T^{{1}/{2},2}\}}\|f\|_{\star}+\sum_{\star\in\{L^\frac{6}{5}_{T},X_T^{\frac{5}{6},\frac{6}{5}}\}}	\|\partial_x f\|_{\star}\lesssim \|\bar f_0\|_{\dot W^{\frac{1}{3},\frac{6}{5}}}+ \sum_{\star\in\{L^\frac{6}{5}_{T},X_T^{\frac{5}{6},\frac{6}{5}}\}}\| F\|_{\star}+T^{\frac{1}{4}}\sum_{\star\in\{{L^1_{T}},{X_T^{1,1}}\}}\|R\|_{\star}.
		\end{align*}
		Moreover, if $R=0$, we have 
		\begin{align*}
			&\quad\quad\quad\quad\quad\|\partial_x f\|_{X_T^{1-\gamma,\infty}}\lesssim \|\bar f_0\|_{\dot C^{2\gamma}}+\|F\|_{X_T^{1-\gamma,\infty}},\\
			&\|\partial_xf\|_{L^2_{T}}+\|\partial_xf\|_{X_T^{\frac{1}{2},2}}+\|\partial_x f\|_{X_T^{\frac{3}{4},\infty}}\lesssim \|f_0\|_{L^2}+\|F\|_{L^2_{T}}+\|F\|_{X_T^{\frac{1}{2},2}}+\|F\|_{X_T^{\frac{3}{4},\infty}}.
		\end{align*}
	\end{lemma}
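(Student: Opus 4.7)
I would represent the solution $f$ via two complementary parametrices and then read off each of the three displayed estimates from the preliminary lemmas of Section~\ref{secpre}. Near each jump $a_n$, I would introduce the two-valued model coefficient $\psi_n(x)=c_n^-\mathbf{1}_{x<a_n}+c_n^+\mathbf{1}_{x>a_n}$, which agrees with $\phi$ on $[a_n-\varepsilon,a_n+\varepsilon]$. The equation $\partial_t h=\partial_x(\psi_n\partial_x h)$ admits an explicit Green's function $\mathbf{G}_n(t,x,y)$, built by the reflection/image method as a linear combination of rescaled Gaussians whose amplitudes are fixed by the transmission conditions $h(a_n^-)=h(a_n^+)$ and $c_n^-\partial_xh(a_n^-)=c_n^+\partial_xh(a_n^+)$. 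A direct computation shows that $\mathbf{G}_n$ satisfies Gaussian bounds analogous to Lemma~\ref{lemheat} with scaled time $\zeta t$, $\zeta=\min(c_n^-,c_n^+)\geq c_0$, together with the Hölder-in-time increments required by Remark~\ref{rem2.2} and by the remark after Lemma~\ref{rem}. Away from $\mathbb{I}_\varepsilon$, $\phi\in W^{1,\infty}$ with $\|\phi'\|_\infty\leq C_\phi$, so the frozen-coefficient heat kernel $\mathbf{K}(\phi(x)t,x-y)$ serves as a parametrix, with the difference $\phi-\phi(x)$ re-entering as a perturbative source. A partition of unity subordinate to $\{[a_n-\varepsilon/2,a_n+\varepsilon/2]\}\cup\{\mathbb{R}\setminus\mathbb{I}_{\varepsilon/4}\}$ then produces the "near-jump" formula (valid on the intervals around each $a_n$) and the "far-from-jump" formula (valid on the complement); the resulting commutators live in the Lipschitz region of $\phi$ and are absorbed into the source terms.

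\textbf{Derivation of the estimates.} For each displayed inequality I would bound the contributions of $\bar f_0$, $F$, and $R$ separately in the two regions. The datum $f_0=\partial_x\bar f_0$ is handled by integration by parts, reducing to a convolution of $\partial_y\mathbf{G}_n$ (or $\partial_y\mathbf{K}$) against $\bar f_0$; Lemma~\ref{lemheat} controls the kernel in every $L^p$, and Lemma~\ref{lemsob} supplies the symmetrization needed to deal with the non-translation-invariance of $\mathbf{G}_n$ at $a_n$. For the $F$-source I would use the identity $\partial_t\mathbf{G}_n=\partial_x(\phi\partial_x\mathbf{G}_n)$ to convert $\partial_x F$ into a $\partial_t$-convolution, and then apply Lemma~\ref{mainlem} through Remark~\ref{rem2.2} for the Hölder-in-time norms and Lemma~\ref{lemlow} for the $L^p$-in-time norms. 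The $R$-source enters through Lemma~\ref{rem} with $l\in\{0,1\}$ and $p\in\{6/5,2\}$; the factor $T^{1/4}$ in the first estimate is exactly the $T^{\sigma-(1+l)/2+1/(2p)}$ scaling from that lemma, after converting its output from $X_T^{\sigma,p}$ to $L^2_T$ or $L^{6/5}_T$ by a one-line Hölder-in-time argument. The two $R=0$ estimates follow the same scheme with different choices of $(p,\sigma)$: the $\dot{C}^{2\gamma}$ version combines the initial-data estimate with the Schauder bound of Lemma~\ref{mainlem} in $X_T^{1-\gamma,\infty}$, and the $L^2$ version combines the classical energy estimate (at $p=2$) with the $X_T^{1/2,2}$ and $X_T^{3/4,\infty}$ bounds from Lemmas~\ref{mainlem} and~\ref{rem}.

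\textbf{Main obstacle.} The technical heart of the argument is the construction of $\mathbf{G}_n$ and the verification that all its derivatives satisfy the precise $L^p$-weighted pointwise and Hölder-in-time bounds required by Remark~\ref{rem2.2} and the remark after Lemma~\ref{rem}, uniformly in the transmission parameters $c_n^\pm\in[c_0,c_0^{-1}]$. The accompanying challenge is the choice of $T^*=T^*(c_0,C_\phi,\varepsilon)$: the commutators with the cutoffs $\chi_n$ live on the strip $\varepsilon/2<|x-a_n|<\varepsilon$, which forces $T^*\ll\varepsilon^2$ so that the parabolic diffusion does not spread across that strip during $[0,T^*]$; the Lipschitz perturbation $(\phi-\phi(x))\partial_x^2$ in the far-from-jump parametrix forces $T^*\ll C_\phi^{-2}$. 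Arranging these two smallness constraints and checking that every perturbative term picks up a positive power of $T$ sufficient to be absorbed by the left-hand side is the delicate bookkeeping step.
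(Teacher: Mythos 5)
Your overall architecture coincides with the paper's: near each $a_n$ the paper also works with the piecewise-constant model $\bar\phi=c^-\mathbf{1}_{x\le 0}+c^+\mathbf{1}_{x>0}$ (its transmission kernel is obtained in Lemma \ref{lemformula} from half-line heat kernels plus a boundary correction fixed by the flux condition, which is equivalent to your image construction), moves $(\phi-\bar\phi)f_x$ into the source, uses the frozen-coefficient parametrix \eqref{sofor1} away from $\mathbb{I}_\varepsilon$ with $(\phi(x)-\phi(y))f_x$ as the error, and closes by absorbing the $T^{1/3}\|\partial_x f\|$-type terms for $T$ small in terms of $c_0,C_\phi,\varepsilon$. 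Your treatment of the $X_T^{\sigma,p}$ norms and of the $R$-term (including the non-critical H\"older-in-time conversion producing the factor $T^{1/4}$) matches the paper's use of Lemmas \ref{mainlem}, \ref{lemlow} and \ref{rem}.

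The genuine gap is in the space-time Lebesgue components of the left-hand sides, namely $\|f\|_{L^2_{T}}$, $\|\partial_x f\|_{L^{6/5}_{T}}$ and, in the $R=0$ case, $\|\partial_x f\|_{L^2_{T}}$. You propose to obtain the ``$L^p$-in-time norms'' from Lemma \ref{lemlow}, but that lemma (like Lemma \ref{mainlem}) outputs weighted-sup-in-time $X_T^{\sigma,p}$ norms, and the conversion to $L^p_{T}$ by H\"older in time fails exactly at the exponents occurring here: $\sigma p=1$ for $(\sigma,p)=(\tfrac12,2)$ and $(\tfrac56,\tfrac65)$, so $\int_0^T t^{-\sigma p}\,dt$ diverges. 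More fundamentally, the needed bound $\|\partial_x f_N\|_{L^p_{T}}\lesssim\|F\|_{L^p_{T}}$ has no smoothing and is a parabolic Calder\'on--Zygmund estimate; it cannot come from Young's inequality with the Gaussian bounds of Lemma \ref{lemheat}. The paper supplies this machinery separately: Lemma \ref{lemap} in the appendix (Parseval's identity for the $L^2_{t,x}$ norms, the Fourier multiplier $\xi/(i\tau+\xi^2)$ for the $L^p_{t,x}$ bound of the $F$-contribution, and the weighted-kernel computation \eqref{aplinearLp} for the initial data), together with Lemma \ref{lemupg} and Hardy's inequality (Lemma \ref{lemhardy}) to control the boundary terms of the transmission formula in $L^p_{t,x}$; these feed into Lemmas \ref{lemLPjum} and \ref{lemLPint}, which are then combined with the $X$-norm lemmas in the proof of Lemma \ref{lemma}. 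Your ``classical energy estimate at $p=2$'' does cover $\|\partial_x f\|_{L^2_{T}}$ when $R=0$, but nothing in your plan yields the $L^{6/5}_{T}$ bound of $\partial_x f$ (or the $L^2_{T}$ bound of $f$) in the first inequality, so this Calder\'on--Zygmund/Parseval input must be added for the argument to close.
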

	\begin{proof}
		Applying  Lemma \ref{lemXTjum}-Lemma \ref{lemLPint} to obtain that there exists $T_0=T_0(C_\phi,\varepsilon)>0$ such that	for any $0<T\leq T_0$, 
		\begin{align*}
			&\sum_{\star\in\{L^2_{T},X_T^{\frac{1}{2},2}\}}\|f\|_{\star}+\sum_{\star\in\{L^\frac{6}{5}_{T},X_T^{\frac{5}{6},\frac{6}{5}}\}}		\|\partial_x f\|_{\star}\\
			&\ \leq \mathbf{C} \|\bar f_0\|_{\dot W^{\frac{1}{3},\frac{6}{5}}}+ \mathbf{C}\sum_{\star\in\{L^\frac{6}{5}_{T},X_T^{\frac{5}{6},\frac{6}{5}}\}}	\| F\|_{\star}+\mathbf{C}T^{\frac{1}{4}}\sum_{\star\in\{{L^1_{T}},{X_T^{1,1}}\}}\|R\|_{\star}
			+\frac{1}{5}(\|f\|_{L^2_{T}}+\sum_{\star\in\{L^\frac{6}{5}_{T},X_T^{\frac{5}{6},\frac{6}{5}}\}}	\|\partial_x f\|_{\star}),
		\end{align*}
		and, if $R=0$,
		\begin{align*}
			&\| \partial_x f\|_{X_T^{1-\gamma,\infty}}\leq  \mathbf{C}(\|\bar f_0\|_{\dot C^{2\gamma}}+\|F\|_{X_T^{1-\gamma,\infty}}+T^\frac{1}{3}\|\partial_x f\|_{X_T^{1-\gamma,\infty}}),\\
			&\|f\|_{L^6_{T}}+\sum_{\star\in\{L^2_{T},X_T^{\frac{1}{2},2},X_T^{\frac{3}{4},\infty}\}}\|\partial_xf\|_{\star}\leq \mathbf{C}\|f_0\|_{L^2}+\mathbf{C}\sum_{\star\in\{L^2_{T},X_T^{\frac{1}{2},2},X_T^{\frac{3}{4},\infty}\}}\|F\|_{\star}\\
			&\quad\quad\quad\quad\quad\quad\quad\quad\quad\quad\quad\quad\quad\quad\quad\quad\quad\quad\quad+\frac{1}{10}(\|f\|_{L^6_{T}}+\sum_{\star\in\{L^2_{T},X_T^{\frac{1}{2},2},X_T^{\frac{3}{4},\infty}\}}\|\partial_x f\|_{\star}).
		\end{align*}
		where a constant $\mathbf{C}>0$ depends only on $c_0$, $C_\phi$ and $\varepsilon$. 
		This  implies the results by taking $0<T^*=\min \{\frac{1}{(\mathbf{C}+1)^{10}},T_0\}$.
		Then we complete the proof.
	\end{proof}
	
	We first introduce some elementary lemmas that will be used frequently in our proof. 
	\begin{lemma}\label{lemXTl}
		Let $g(t,x)=\int_{\mathbb{R}} \partial_x\mathbf{K}(t,x-y) \bar g_0(y) dy$. Then for any $T>0$,
		\begin{equation*}
			\begin{aligned}
				&\|\partial_x g\|_{X_T^{1-\gamma,\infty}}\lesssim \|\bar g_0\|_{\dot C^{2\gamma}}, \\
				&\|\partial_x g\|_{X_T^{\frac{1}{2},2}}+\|\partial_x g\|_{X_T^{\frac{3}{4},\infty}}\lesssim \|\partial_x \bar g_0\|_{L^2},\\
				&\|g\|_{X_T^{\frac{1}{2},2}}\lesssim \|\bar g_0\|_{L^2},\ \ \ \ \|\partial_x g\|_{X_T^{\frac{5}{6},\frac{6}{5}}}\lesssim \|\bar g_0\|_{\dot W^{\frac{1}{3},\frac{6}{5}}}.
			\end{aligned} 
		\end{equation*}
	\end{lemma}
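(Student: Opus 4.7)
The plan is to reduce each of the four estimates in Lemma \ref{lemXTl} to quantitative bounds on $\partial_x\mathbf{K}$ or $\partial_{xx}\mathbf{K}$ that are already available from Lemma \ref{lemheat}. Since $g(t,\cdot)=\partial_x\mathbf{K}(t)\ast\bar g_0$, I have $\partial_x g=\partial_{xx}\mathbf{K}(t)\ast\bar g_0=\partial_x\mathbf{K}(t)\ast\partial_x\bar g_0$, and I will move derivatives in whichever direction best matches the right-hand side. For the time-Hölder portion of every $X_T^{\sigma,p}$ norm, I uniformly replace the relevant kernel $\mathbf{K}_\ast(s)$ by $\mathbf{K}_\ast(t)-\mathbf{K}_\ast(s)$ and apply the fourth displayed bound of Lemma \ref{lemheat}, which supplies the extra factor $\min\{1,(t-s)/s\}\leq ((t-s)/s)^\alpha$; this systematically converts the sup estimate into the Hölder estimate with the correct weight $s^{\sigma+\alpha}$.

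For the first estimate, $\|\partial_x g\|_{X_T^{1-\gamma,\infty}}\lesssim\|\bar g_0\|_{\dot C^{2\gamma}}$, I use $\int\partial_{xx}\mathbf{K}(t,z)\,dz=0$ to write $\partial_x g(t,x)=\int \partial_{xx}\mathbf{K}(t,x-y)(\bar g_0(y)-\bar g_0(x))\,dy$ and bound the integrand by $\|\bar g_0\|_{\dot C^{2\gamma}}|\partial_{xx}\mathbf{K}(t,\cdot)||\cdot|^{2\gamma}$. The third line of Lemma \ref{lemheat} with $m=2,\,p=1,\,\sigma=2\gamma$ yields $t^{\gamma-1}$, matching the $t^{1-\gamma}$ weight. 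The $X_T^{1/2,2}$ and $X_T^{3/4,\infty}$ bounds in the second and third estimates are straightforward applications of Young's inequality: for $\|\partial_x g\|_{X_T^{1/2,2}}$ and $\|g\|_{X_T^{1/2,2}}$ I use $\|\partial_x\mathbf{K}(s)\|_{L^1}\lesssim s^{-1/2}$ paired with $\partial_x\bar g_0$ or $\bar g_0$ in $L^2$, while for $\|\partial_x g\|_{X_T^{3/4,\infty}}$ I use $\|\partial_x\mathbf{K}(s)\|_{L^2}\lesssim s^{-3/4}$ paired with $\partial_x\bar g_0\in L^2$.

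The one genuinely delicate estimate, which I expect to be the main technical step, is $\|\partial_x g\|_{X_T^{5/6,6/5}}\lesssim\|\bar g_0\|_{\dot W^{1/3,6/5}}$. After the same cancellation $\partial_x g(s,x)=\int\partial_{xx}\mathbf{K}(s,z)(\bar g_0(x-z)-\bar g_0(x))\,dz$, Minkowski's inequality in $x$ gives
\[
\|\partial_x g(s)\|_{L^{6/5}}\leq\int|\partial_{xx}\mathbf{K}(s,z)|\,\|\bar g_0(\cdot-z)-\bar g_0\|_{L^{6/5}}\,dz.
\]
I then apply Hölder's inequality in $z$ with conjugate pair $(6,6/5)$ and insert the weight $|z|^{7/6}$: the $L^{6/5}(dz)$ factor becomes $\bigl(\int|z|^{-7/5}\|\bar g_0(\cdot-z)-\bar g_0\|_{L^{6/5}}^{6/5}\,dz\bigr)^{5/6}$, which equals $\|\bar g_0\|_{\dot W^{1/3,6/5}}$ since $1+\sigma p=1+(1/3)(6/5)=7/5$, and the $L^6(dz)$ factor $\bigl(\int|\partial_{xx}\mathbf{K}(s,z)|^6|z|^7\,dz\bigr)^{1/6}$ is controlled by Lemma \ref{lemheat} (third line, $m=2,\,p=6,\,\sigma=7/6$) by $s^{-5/6}$. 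The main obstacle is precisely tuning this weight so that the kernel integral and the Besov--Sobolev characterization land on the correct exponents; once set up, the Hölder-in-time analogue follows by the same recipe using the fourth-line bound of Lemma \ref{lemheat} with its built-in $\min\{1,(t-s)/s\}$ gain.
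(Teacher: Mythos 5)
Your proposal is correct and follows essentially the same route as the paper: the cancellation $\int\partial_x^2\mathbf{K}\,dx=0$ plus the weighted kernel bounds of Lemma \ref{lemheat} for the $\dot C^{2\gamma}$ estimate, Young's inequality with $\|\partial_x\mathbf{K}(t)\|_{L^1}\lesssim t^{-1/2}$ and $\|\partial_x\mathbf{K}(t)\|_{L^2}\lesssim t^{-3/4}$ for the $L^2$-data estimates, Minkowski followed by Hölder in $z$ with the weight $|z|^{7/6}$ for the $\dot W^{1/3,6/5}$ estimate, and the kernel-difference bound with its $\min\{1,(t-s)/s\}$ gain for every time-Hölder seminorm. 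No gaps; the exponent bookkeeping ($t^{\gamma-1}$, $t^{-5/6}$, $1+\sigma p=7/5$) matches the paper's computation.
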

	\begin{proof}
		Observe that $\int \partial_x^2\mathbf{K}(t,x)dx=0$. Hence 
		\begin{align*}
			\partial_x	g(t,x)=\int_{\mathbb{R}} \partial_x^2\mathbf{K}(t,x-y) (\bar g_0(y)-\bar g_0(x)) dy=-\int_{\mathbb{R}} \partial_x^2\mathbf{K}(t,z) \Delta_z\bar g_0(x) dy,
		\end{align*}
		where we denote $\Delta_z\bar g_0(x)=\bar g_0(x)-\bar g_0(x-z)$.
		Note that $|\Delta_z\bar g_0(x)|\leq |z|^{2\gamma}\|\bar g_0\|_{\dot C^{2\gamma}}$. Hence, by Lemma \ref{lemheat}, we have 
		\begin{align*}
			&\|	\partial_x	g(t)\|_{L^\infty}\lesssim \int_{\mathbb{R}}|\partial_x^2\mathbf{K}(t,x)||x|^{2\gamma}dx\|\bar g_0\|_{\dot C^{2\gamma}}\lesssim t^{\gamma-1}\|\bar g_0\|_{\dot C^{2\gamma}}.
		\end{align*}
		Moreover, for any $0<s<t<T$, 
		\begin{align*}
			\|\partial_x	g(t)-\partial_x	g(s)\|_{L^\infty}&\lesssim \int_{\mathbb{R}} |\partial_{x}^2\mathbf{K}(t,x)-\partial_{x}^2\mathbf{K}(s,x)||x|^{2\gamma}dx \|\bar g_0\|_{\dot C^{2\gamma}}\\
			&\lesssim s^{\gamma-1}\min\{1,\frac{t-s}{s}\}\|\bar g_0\|_{\dot C^{2\gamma}}\lesssim(t-s)^\alpha s^{\gamma-1-\alpha}\|\bar g_0\|_{\dot C^{2\gamma}}.
		\end{align*}
		Hence 
		\begin{equation}\label{1111}
			\|\partial_x g\|_{X_T^{1-\gamma,\infty}}\lesssim \|\bar g_0\|_{\dot C^{2\gamma}}.
		\end{equation}
		By Lemma \ref{lemheat} and Young's inequality, we have 
		\begin{equation}\label{ini}
			\begin{aligned}
				&t^\frac{1}{2}\|\partial_x g(t)\|_{L^2}+t^\frac{3}{4}\|\partial_x g\|_{L^\infty}\lesssim (t^\frac{1}{2}\|\partial_x \mathbf{K}(t)\|_{L^1}+t^\frac{3}{4}\|\partial_x \mathbf{K}(t)\|_{L^2})\|\partial_x \bar g_0\|_{L^2}\lesssim \|\partial_x \bar g_0\|_{L^2},\\
				&t^\frac{1}{2}\| g(t)\|_{L^2}\lesssim t^\frac{1}{2}\|\partial_x \mathbf{K}(t)\|_{L^1}\|\bar g_0\|_{L^2}\lesssim \|\bar g_0\|_{L^2},
			\end{aligned}	
		\end{equation}and 
		\begin{equation}\label{ini2}
			\begin{aligned}
				&s^{\frac{1}{2}+\alpha}\|\partial_x g(t)-\partial_x g(s)\|_{L^2}+s^{\frac{3}{4}+\alpha}\|\partial_x g(t)-\partial_x g(s)\|_{L^\infty}\\
				&\quad\quad\lesssim (s^{\frac{1}{2}+\alpha}\|\partial_x K(t)-\partial_x K(s)\|_{L^1}+s^{\frac{3}{4}+\alpha}\|\partial_x K(t)-\partial_x K(s)\|_{L^2})\|\partial_x \bar g_0\|_{L^2}\\
				&\quad\quad\lesssim (t-s)^\alpha\|\partial_x \bar g_0\|_{L^2}.
			\end{aligned}
		\end{equation}
		Similarly, it is easy to check that 
		\begin{equation}\label{ini3}
			s^{\frac{1}{2}+\alpha}\|g(t)-g(s)\|_{L^2}\lesssim (t-s)^\alpha\|\bar g_0\|_{L^2}.
		\end{equation}
		By \eqref{1111}-\eqref{ini3}, one has 
		\begin{align}\label{re1}
			\|\partial_x g\|_{X_T^{1-\gamma,\infty}}\lesssim \|\bar g_0\|_{\dot C^{2\gamma}},\ \ \ \ \  \|\partial_x g\|_{X_T^{\frac{1}{2},2}}+\|\partial_x g\|_{X_T^{\frac{3}{4},\infty}}\lesssim \|\partial_x \bar g_0\|_{L^2},\ \ \ \  \|g\|_{X_T^{\frac{1}{2},2}}\lesssim \|\bar g_0\|_{L^2}.
		\end{align}
		It remains to estimate $\|\partial_x g \|_{X_T^{\frac{5}{6},\frac{6}{5}}}$. By H\"{o}lder's inequality and  Lemma \ref{lemheat},
		\begin{align*}
			\|\partial_x g(t)\|_{L^\frac{6}{5}}&\lesssim \int_{\mathbb{R}}|\partial_x^2\mathbf{K}(t,z)|\|\Delta_z\bar f_0\|_{L^\frac{6}{5}}dz\lesssim \left(\int_{\mathbb{R}} |\partial_x^2\mathbf{K}(t,z)|^6|z|^7dz\right)^\frac{1}{6}\left(\int_{\mathbb{R}} \frac{\|\Delta_z\bar f_0\|_{L^{6/5}}^{6/5}}{|z|^{7/5}}dz\right)^{5/6}\\
			&\lesssim t^{-\frac{5}{6}}\|\bar g_0\|_{\dot W^{\frac{1}{3},\frac{6}{5}}}.
		\end{align*}
		Moreover, for any $0<s<t<T$,
		\begin{align*}
			\|\partial_x g(t)-\partial_x g(s)\|_{L^\frac{6}{5}}&\lesssim \int_{\mathbb{R}}|\partial_x^2\mathbf{K}(t,z)-\partial_x^2\mathbf{K}(s,z)|\|\Delta_z\bar f_0\|_{L^\frac{6}{5}}dz\\
			&\lesssim \left(\int_{\mathbb{R}} |\partial_x^2\mathbf{K}(t,z)-\partial_x^2\mathbf{K}(s,z)|^6|z|^7dz\right)^\frac{1}{6}\left(\int_{\mathbb{R}} \frac{\|\Delta_z\bar f_0\|_{L^{6/5}}^{6/5}}{|z|^{7/5}}dz\right)^{5/6}\\
			&\lesssim s^{-\frac{5}{6}-\alpha}(t-s)^\alpha\|\bar g_0\|_{\dot W^{\frac{1}{3},\frac{6}{5}}}.
		\end{align*}
		Then we get 
		$$
		\|\partial_x g\|_{X_T^{\frac{5}{6},\frac{6}{5}}}\lesssim \|\bar g_0\|_{\dot W^{\frac{1}{3},\frac{6}{5}}}.
		$$
		Combining this with \eqref{re1}, we complete the proof.
	\end{proof}
	\begin{remark}\label{remlem3.2}
		The result in Lemma \ref{lemXTl} is still true for $g(t,x)=\int_{\mathbb{R}}\partial_y\mathbf{G}(t,x-y,x)\bar{g}_0(y)dy$, where $\mathbf{G}(t,x,z)$ satisfies that $\int \partial_x^2 \mathbf{G}(t,x,z)dx=0$, and 
		\begin{align*}
			&\max_{l=0,1}\sup_z|\partial_x^m\partial_z^l\mathbf{G}(t,z,x)|\lesssim |\partial_x^m\mathbf{K}(\zeta t,x)|,\\
			&\max_{l=0,1}\sup_z|\partial_x^m\partial_z^l\mathbf{G}(t,z,x)-\partial_x^m\partial_z^l\mathbf{G}(s,z,x)|\lesssim |\partial_x^m\mathbf{K}(\zeta s,x)-\partial_x^m\mathbf{K}(\zeta t,x)|,\ \ \ \ m=1,2,
		\end{align*}
		for a universal constant $\zeta>0$.
	\end{remark}
	The following is the Hardy's inequality.
	\begin{lemma}\label{lemhardy}
		For any non-negative measurable function $h:\mathbb{R}\to \mathbb{R}^+$, there holds for any $p\in(1,\infty)$,
		\begin{align*}
			\int_0^\infty\left(\int_0^\infty h(x)\frac{dx}{x+y}\right)^pdy\lesssim \int_0^\infty h(x)^pdx.
		\end{align*}
	\end{lemma}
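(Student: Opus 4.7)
The plan is to view the inequality as the $L^p$-boundedness of the Hardy-type integral operator
$$Th(y) := \int_0^\infty \frac{h(x)}{x+y}\,dx.$$
The kernel $1/(x+y)$ is homogeneous of degree $-1$, so dilation is the natural symmetry to exploit. I would first perform the change of variables $x = yt$ in the inner integral, which gives
$$Th(y) = \int_0^\infty \frac{h(yt)}{1+t}\,dt,$$
expressing $Th$ as an average of dilates of $h$ against the weight $dt/(1+t)$.

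Next I would apply Minkowski's inequality for integrals to pull the $L^p(dy)$ norm inside the $dt$-integral:
$$\|Th\|_{L^p(0,\infty)} \le \int_0^\infty \frac{\|h(t\,\cdot)\|_{L^p(0,\infty)}}{1+t}\,dt.$$
Using the scaling identity $\|h(t\,\cdot)\|_{L^p(0,\infty)} = t^{-1/p}\|h\|_{L^p(0,\infty)}$ pulls out the $h$-norm and reduces the proof to bounding the numerical integral
$$\int_0^\infty \frac{t^{-1/p}}{1+t}\,dt.$$
This integrand behaves like $t^{-1/p}$ near $0$ (integrable since $1/p < 1$) and like $t^{-1-1/p}$ at infinity (integrable since $1/p > 0$), so the integral is finite precisely on the stated range $p\in(1,\infty)$, which yields the claimed bound with constant depending only on $p$.

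There is no serious obstacle: the argument is a two-line Minkowski-plus-scaling proof of a classical Hilbert-type inequality on the half-line, and the restriction $p\in(1,\infty)$ is forced by, and matches exactly, the integrability at the two endpoints of the $t$-integral. If one prefers to avoid Minkowski, Schur's test with the weight $w(x)=x^{-1/p}$ gives the same bound, since
$$\int_0^\infty \frac{x^{-1/(pp')}}{x+y}\,dx = C_p\, y^{-1/(pp')}$$
by the same scaling computation, but the Minkowski route is the cleanest.
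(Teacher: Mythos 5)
Your proof is correct, and it takes a genuinely different route from the paper. The paper splits the inner integral at $x=y$, bounding $\int_0^y h(x)\,\frac{dx}{x+y}$ by $\frac{1}{y}\int_0^y h(x)\,dx$ and $\int_y^\infty h(x)\,\frac{dx}{x+y}$ by $\int_y^\infty h(x)\,\frac{dx}{x}$, and then invokes the two classical Hardy inequalities (citing Hardy--Littlewood--P\'olya) to get the explicit constant $\frac{p^p}{(p-1)^p}+p^p$. You instead exploit the degree $-1$ homogeneity of the kernel directly: the substitution $x=yt$, Minkowski's integral inequality, and the scaling identity $\|h(t\,\cdot)\|_{L^p}=t^{-1/p}\|h\|_{L^p}$ reduce everything to the finiteness of $\int_0^\infty \frac{t^{-1/p}}{1+t}\,dt$, which holds exactly for $p\in(1,\infty)$. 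Your argument is self-contained (no external Hardy inequality needed), makes the role of the restriction on $p$ transparent, and in fact yields the sharp Hilbert constant $\pi/\sin(\pi/p)$; the paper's route is shorter given the citation and produces an explicit, if cruder, constant. Either suffices for the $\lesssim$ claimed in the lemma.
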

	\begin{proof}
		By Hardy's inequality \cite{Hardy}, 
		\begin{align*}
			\int_0^\infty\left(\int_0^\infty h(x)\frac{dx}{x+y}\right)^pdy\leq& \int_0^\infty\left(\int_0^y h(x){dx}\right)^p\frac{dy}{y^p}+\int_0^\infty\left(\int_y^\infty h(x)\frac{dx}{x}\right)^p{dy}\\
			\leq &\left(\frac{p^p}{(p-1)^p}+p^p\right)\int_0^\infty h(x)^p dx.
		\end{align*}
		This completes the proof.
	\end{proof}\vspace{0.5cm}\\
	The following lemma helps us to control boundary terms in \eqref{formubd3}.
	\begin{lemma}\label{lemupg}
		Let $r\in[1,+\infty), p,q\in(1,+\infty)$ be such that $\frac{1}{p}+1=\frac{1}{q}+\frac{1}{r}$. Suppose that  $\tilde G(t,x)$ satisfies
		\begin{align*}
			\int_0^\infty |\tilde G(t,x)|^r dt \lesssim \frac{1}{|x|}.
		\end{align*}
		Then for 	$$
		g(t,x)=\mathbf{1}_{x\geq 0}\int_0^t \int_0^{+\infty} \tilde G(t-\tau,x+y) f(\tau,y)dyd\tau,
		$$ 
		there holds 
		$$
		\|g\|_{L^p_{T}}\lesssim \|f\|_{L^q_{T}}.
		$$
	\end{lemma}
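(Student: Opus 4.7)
The plan is to reduce the claim to a purely spatial Hardy-type estimate by applying Young's convolution inequality in the time variable for each fixed $(x,y)$, and then invoking Lemma \ref{lemhardy} (or a fractional half-line analog when $r>1$).

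First, for fixed $x,y\geq 0$, the inner time integral is the convolution $(\tilde G(\cdot,x+y)\ast_\tau f(\cdot,y))(t)$. Young's convolution inequality with the exponent relation $\frac{1}{p}+1=\frac{1}{q}+\frac{1}{r}$ gives
$$
\left\|\int_0^t \tilde G(t-\tau,x+y)f(\tau,y)\,d\tau\right\|_{L^p_t}\leq \|\tilde G(\cdot,x+y)\|_{L^r(\mathbb{R}_+)}\|f(\cdot,y)\|_{L^q_T}\lesssim (x+y)^{-1/r}\|f(\cdot,y)\|_{L^q_T},
$$
the last step using the hypothesis $\int_0^\infty |\tilde G(t,z)|^r dt\lesssim |z|^{-1}$ with $z=x+y$.

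Next, by Minkowski's integral inequality I bring the $L^p_t$ norm inside $dy$. Setting $H(y):=\|f(\cdot,y)\|_{L^q_T}$ (so that $\|H\|_{L^q(\mathbb{R}_+)}=\|f\|_{L^q_T}$ since the two mixed-norm exponents coincide), one obtains
$$
\|g(\cdot,x)\|_{L^p_t}\lesssim \int_0^\infty \frac{H(y)}{(x+y)^{1/r}}\,dy.
$$
Taking $L^p_x$ norms on $(0,\infty)$ and using $\|g\|_{L^p_T}^p=\int_0^\infty \|g(\cdot,x)\|_{L^p_t}^p\,dx$, the claim reduces to the purely spatial bound
$$
\left\|\int_0^\infty \frac{H(y)}{(x+y)^{1/r}}\,dy\right\|_{L^p_x(\mathbb{R}_+)}\lesssim \|H\|_{L^q(\mathbb{R}_+)}.
$$

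When $r=1$ the scaling forces $p=q$, and this is exactly Lemma \ref{lemhardy}. For $r>1$, the pointwise domination $(x+y)^{-1/r}\leq |x-y|^{-1/r}$ on $(0,\infty)\times(0,\infty)$ reduces the inequality to the one-dimensional Hardy--Littlewood--Sobolev estimate with $\lambda=1/r\in(0,1)$, whose scaling relation matches the hypothesis. The main obstacle is this last step in the range $r>1$, since Lemma \ref{lemhardy} itself covers only $r=1$: one either invokes HLS as a classical black box, or proves the bound self-contained by splitting $\int_0^\infty=\int_0^x+\int_x^\infty$ (using $(x+y)^{-1/r}\leq x^{-1/r}$ on the first piece and $\leq y^{-1/r}$ on the second) and applying the standard one-sided power-weighted Hardy inequalities to each piece.
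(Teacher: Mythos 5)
Your proposal is correct and follows essentially the same route as the paper: Young's inequality in the time variable plus Minkowski reduces the claim to the spatial operator with kernel $(x+y)^{-1/r}$, with the case $r=1$ handled by Lemma \ref{lemhardy}. The only cosmetic difference is the finish for $r>1$: the paper observes that $(x+y)^{-1/r}$ acts as a convolution kernel in $L^r_w$ and applies the weak-form Young inequality, while you invoke the (equivalent) Hardy--Littlewood--Sobolev bound or the power-weighted Hardy splitting, both of which are valid under the given exponent relation.
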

	\begin{proof}
		By Young's inequality,
		\begin{align*}
			\|g\|_{L^p_{T}}&\lesssim \left\|\int_0^\infty\left(	\int_0^t |\tilde G(t,x+y)|^r dt\right)^\frac{1}{r}\|f(y)\|_{L^q_t}dy\right\|_{L^p_x(\mathbb{R}^+)}\\
			&\lesssim \left\|\int_0^\infty \|f(y)\|_{L^q_t}\frac{dy}{|x+y|^{\frac{1}{r}}}\right\|_{L^p_x(\mathbb{R}^+)}.
		\end{align*}
		When $r>1$, then by Young's inequality, we obtain that 
		\begin{align*}
			\left\|\int_0^\infty \|f(y)\|_{L^q_{t,T}}\frac{dy}{|x+y|^{\frac{1}{r}}}\right\|_{L^p_x(\mathbb{R}^+)}\lesssim \|f\|_{L^q_{T}} \||x |^{-\frac{1}{r}}\|_{L^r_w}\lesssim \|f\|_{L^q_{T}} .
		\end{align*}
		When $r=1$, we obtain the result by Lemma \ref{lemhardy}.  
		This completes the proof.
	\end{proof}
	\subsection{Estimates near jump points}
	We first study estimates near jump points. Let $\mathbb{I}_{\varepsilon}^n=[a_n-\varepsilon,a_n+\varepsilon]$.  Without loss of generality, we assume $a_n=0$. Then we can write equation \eqref{eqpara} as 
	\begin{equation}\label{jueq}
		\begin{aligned}
			&\partial_t f -\partial_x(\bar  \phi f_x)=\partial_x \tilde F+R ,\\
			&f(0,x)=f_0(x),
		\end{aligned}
	\end{equation}
	where we denote $\tilde F=F+(\phi-\bar \phi)f_x$ with $\bar \phi=\mathbf{1}_{x\leq 0}c_n^-+\mathbf{1}_{x>0}c_n^+$. For simplicity, we omit  the subscript $n$ in our proof and denote $c^\pm=c^\pm_n$.\\
	By Lemma \ref{lemformula}, the solution of \eqref{jueq} has formula 
	$f=f^+\mathbf{1}_{x\geq 0}+f^-\mathbf{1}_{x< 0}$ , where  
	\begin{align}\label{soforjum}
		f^\pm (t,x)=&f_L^\pm+f_N^\pm+f_R^\pm.
	\end{align}
	Here $f_L^\pm=f_1^\pm+f_{1,B}^\pm$, $f_N^\pm=f_2^\pm+f_{2,B}^\pm$, $f_R^\pm=f_3^\pm+f_{3,B}^\pm$, with 
	\begin{align*}
		&f_1^\pm(t,x)=\int_{\mathbb{R}^\pm}( \mathbf{K}(c_\pm t,x-y)-\mathbf{K}(c_\pm t,x+y))f_0(y) dy,\\
		&f_2^\pm(t,x)=\int_0^t \int_{\mathbb{R}^\pm}( \mathbf{K}(c_\pm (t-\tau),x-y)-\mathbf{K}(c_\pm (t-\tau),x+y)) \partial_x\tilde F(\tau,y) dyd\tau,\\
		&f_3^\pm(t,x)=\int_0^t \int_{\mathbb{R}^\pm}( \mathbf{K}(c_\pm (t-\tau),x-y)-\mathbf{K}(c_\pm (t-\tau),x+y))  R(\tau,y) dyd\tau,\\
		& f_{m,B}^\pm=\frac{-2\sqrt{c_\pm}}{\sqrt{c_+}+\sqrt{c_-}}\int_0^t\mathbf{K}(c_\pm(t-\tau),x)(c_+\partial_xf_m^+(\tau,0)-c_-\partial_xf_m^-(\tau,0))d\tau,\ \ m=1,2,3.
	\end{align*}
	Here $f_{m,B}$ means these are boundary terms. Denote 
	$\beta_m(\tau,y)=c_+f_m^+(\tau,y)+c_-f_m^-(\tau,-y)$, then 
	\begin{align*}
		f_{m,B}^+&=C_+ \int_0^t\int_0^\infty\partial_y\left(\mathbf{K}(c_+(t-\tau),x+ y)\partial_x\beta_m(\tau,y)\right)dyd\tau\\
		&=-C_+\int_0^t\int_0^\infty\partial_x^2\mathbf{K}(c_+(t-\tau),x+ y)\beta_m(\tau,y)dyd\tau\\
		&\quad\quad+C_+\int_0^t\int_0^\infty \mathbf{K}(c_+(t-\tau),x+ y)\partial_x^2\beta_m(\tau,y)dyd\tau,
	\end{align*}
	where we denote $C_\pm=\frac{-2\sqrt{c_\pm}}{\sqrt{c_+}+\sqrt{c_-}}$.
	Note that 
	\begin{align*}
		&\partial_x ^2\beta_1(\tau,y)=\partial_t f_1^+(\tau,y)+\partial_t f_1^-(\tau,-y),\\
		&\partial_x ^2\beta_2(\tau,y)=\partial_t f_2^+(\tau,y)-\partial_x \tilde F(\tau,y)+\partial_t f_2^-(\tau,-y)-\partial_x \tilde F(\tau,-y),\\
		&\partial_x ^2\beta_3(\tau,y)=\partial_t f_3^+(\tau,y)-R (\tau,y)+\partial_t f_3^-(\tau,-y)-R (\tau,-y).
	\end{align*}
	Hence,
	\begin{equation}\label{formubd3}
		\begin{aligned}
			f_{1,B}^+(t,x)=&C_+(c_+-c_-)\int_0^t \int_0^\infty\partial_x^2\mathbf{K}(c_+(t-\tau),x+ y)f_1^-(\tau,-y)dyd\tau\\
			&-C_+\int_0^\infty  \mathbf{K}(c_+t,x+y)(f_0(y)+f_0(-y))dy,\\
			f_{2,B}^+(t,x)=&C_+(c_+-c_-)\int_0^t \int_0^\infty\partial_x^2\mathbf{K}(c_+(t-\tau),x+ y)f_2^-(\tau,-y)dyd\tau\\
			&-C_+\int_0^t \int_0^\infty  \mathbf{K}(c_+(t-\tau),x+y)((\partial_x\tilde F)(\tau,y)+(\partial_x\tilde F)(\tau,-y))dyd\tau,\\
			f_{3,B}^+(t,x)=&C_+(c_+-c_-)\int_0^t \int_0^\infty\partial_x^2\mathbf{K}(c_+(t-\tau),x+ y)f_3^-(\tau,-y)dyd\tau\\
			&-C_+\int_0^t \int_0^\infty  \mathbf{K}(c_+(t-\tau),x+y)(R(\tau,y)+R(\tau,-y)))dy.
		\end{aligned}
	\end{equation}
	\begin{remark}
		Note that we can recover the standard heat kernel from our formula when $c_+=c_-=c$. More precisely, we have 
		\begin{align*}
			&f_L(t,x)=\int_{\mathbb{R}} \mathbf{K}(ct,x-y) f_0(y) dy, \ \ f_N(t,x)=\int_0^t \int_{\mathbb{R}} \mathbf{K}(c(t-\tau),x-y) \partial_x F(\tau,y) dy,\\
			&f_R(t,x)=\int_0^t \int_{\mathbb{R}} \mathbf{K}(c(t-\tau),x-y) R(\tau,y) dy.
		\end{align*}
	\end{remark}
	The following lemma will help us to analyze the regularity of the solution near the jump points.
	\begin{lemma}\label{lemXTjum}
		Let $f$ be a solution to \eqref{eqpara} with initial data $f_0=\partial_x\bar f_0$. 	Then there exists  $T>0$ such that 
		\begin{equation*}
			\|f\|_{X_T^{\frac{1}{2},2}(\mathbb{I}_{\varepsilon})}+\|\partial_x f\|_{X_T^{\frac{5}{6},\frac{6}{5}}(\mathbb{I}_{\varepsilon})}\lesssim \|\bar f_0\|_{\dot W^{\frac{1}{3},\frac{6}{5}}}+\|F\|_{X_T^{\frac{5}{6},\frac{6}{5}}}+T^\frac{1}{3}\|f_x\|_{X_T^{\frac{5}{6},\frac{6}{5}}}+T^\frac{1}{4}(\|R\|_{L^1_{T}}+\|R\|_{X_T^{1,1}}).
		\end{equation*}
		Moreover, if $R=0$,
		\begin{align*}
			&\|\partial_x f\|_{X_T^{1-\gamma,\infty}(\mathbb{I}_{\varepsilon})}\lesssim  \|\bar f_0\|_{\dot C^{2\gamma}}+\|F\|_{X_T^{1-\gamma,\infty}}+ T^{\frac{1}{3}}\|\partial_x f\|_{X_T^{1-\gamma,\infty}},\\
			&\|\partial_x f\|_{X_T^{\frac{1}{2},2}(\mathbb{I}_{\varepsilon})}+\|\partial_x f\|_{X_T^{\frac{3}{4},\infty}(\mathbb{I}_{\varepsilon})}\lesssim  \|\partial_x\bar f_0\|_{L^2}+\|F\|_{X_T^{\frac{3}{4},\infty}}+ T^{\frac{1}{3}}(\|\partial_x f\|_{X_T^{\frac{1}{2},2}}+\|\partial_x f\|_{X_T^{\frac{3}{4},\infty}}).
		\end{align*}
	\end{lemma}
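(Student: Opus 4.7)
The plan is to exploit the reflection-based solution formula (\ref{soforjum})--(\ref{formubd3}) localized at a single jump (WLOG $a_n=0$), writing $\tilde F=F+(\phi-\bar\phi)f_x$ with $|\phi-\bar\phi|\lesssim c_0^{-1}$. Each of the nine pieces $f_m^\pm,\ f_{m,B}^\pm$ ($m=1,2,3$) will be estimated separately using the preliminary lemmas of Section \ref{secpre}, and the nine bounds summed.

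For the linear piece $f_L^\pm=f_1^\pm+f_{1,B}^\pm$: the kernel $\mathbf{K}(c_\pm t,x-y)-\mathbf{K}(c_\pm t,x+y)$ acting on $f_0\mathbf{1}_{\mathbb{R}^\pm}$ equals, after rescaling $t\mapsto c_\pm t$, the standard heat semigroup acting on the antisymmetric extension of $f_0$, so Lemma \ref{lemXTl} (with Remark \ref{remlem3.2}) gives the bound of $f_1^\pm$ by $\|\bar{\mathbf{f}}_0\|_{\dot W^{1/3,6/5}}$, $\|\bar{\mathbf{f}}_0\|_{\dot C^{2\gamma}}$, $\|\partial_x\bar{\mathbf{f}}_0\|_{L^2}$, and Lemma \ref{lemsob} converts these into the corresponding norms of $\bar f_0$. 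The boundary piece $f_{1,B}^+$ in (\ref{formubd3}) has a term $\int_0^\infty\mathbf{K}(c_+t,x+y)(f_0(y)+f_0(-y))\,dy$, which is a heat evolution from a symmetric extension and is handled the same way, plus a $(c_+-c_-)$ convolution with $\partial_x^2\mathbf{K}$ against $f_1^-(\tau,-y)\mathbf{1}_{y>0}$, which after one integration by parts becomes a $\partial_x\mathbf{K}$ convolution and can be absorbed by the estimate already obtained for $f_1^-$.

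For the forcing piece $f_N^\pm=f_2^\pm+f_{2,B}^\pm$, an integration by parts in $y$ inside $f_2^\pm$ puts the derivative on the reflected kernel, and combining $\partial_y^2\mathbf{K}=c_\pm^{-1}\partial_\tau\mathbf{K}$ with Remark \ref{rem2.2} and Lemma \ref{mainlem} gives $\|f_2^\pm\|_{X_T^{\sigma,p}(\mathbb{I}_\varepsilon)}\lesssim \|\tilde F\|_{X_T^{\sigma,p}}$ for every $(\sigma,p)$ appearing in the statement. The boundary piece $f_{2,B}^+$ in (\ref{formubd3}) decomposes into (i) a $\partial_x^2\mathbf{K}$ convolution against $f_2^-$ on $\mathbb{R}^+$, handled by Lemma \ref{lemupg} using the bound just obtained, and (ii) a heat-kernel convolution against the symmetric extension of $\partial_x\tilde F$, again treated by Lemma \ref{lemupg} (with Hardy, Lemma \ref{lemhardy}, in the borderline case). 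Crucially, the splitting $\tilde F=F+(\phi-\bar\phi)f_x$ together with $|\phi-\bar\phi|\lesssim c_0^{-1}$ and the time-integration gain of Lemma \ref{lemlow} produces the small factor $T^{1/3}$ in front of $\|f_x\|_{X_T^{\ast,\ast}}$.

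Finally, for $f_R^\pm=f_3^\pm+f_{3,B}^\pm$, the interior term $f_3^\pm$ is controlled by Lemma \ref{rem} (and its remark) applied to the antisymmetric extension of $R$, yielding the $T^{1/4}(\|R\|_{L^1_T}+\|R\|_{X_T^{1,1}})$ contribution; the boundary term $f_{3,B}^+$ is bounded by Lemma \ref{lemupg}. Summing the nine pieces gives each of the three claimed inequalities. The main obstacle will be the boundary terms in (\ref{formubd3}): the kernel $\partial_x^2\mathbf{K}$ is too singular for a plain Young estimate, so we must use the mixed space–time structure of Lemma \ref{lemupg} for $L^p$ control, and combine reflection symmetry with Remark \ref{rem2.2} to obtain the H\"older-in-time bounds; extracting the small gain $T^{1/3}$ (which is what allows the final Lemma \ref{lemma} to close by absorption into the left-hand side) is the other delicate point and rests on the sharp time-integration estimate of Lemma \ref{lemlow}.
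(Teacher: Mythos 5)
Your overall architecture (reflection formula \eqref{soforjum}--\eqref{formubd3}, nine pieces, Lemmas \ref{lemXTl}, \ref{lemsob}, \ref{mainlem}, \ref{lemlow}, \ref{rem} for the interior pieces and a separate treatment of the boundary terms) matches the paper's proof, and the linear and $R$-pieces are handled essentially as in the paper. But there is a genuine gap at the one point you yourself flag as crucial: the extraction of the small factor $T^{1/3}$ in front of $\|f_x\|$. You propose to treat the commutator term $(\phi-\bar\phi)f_x$ using only the bound $|\phi-\bar\phi|\lesssim c_0^{-1}$ together with ``the time-integration gain of Lemma \ref{lemlow}''. This cannot work. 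After the integration by parts, the relevant kernel acting on $(\phi-\bar\phi)f_x$ in the estimate of $\partial_x f_{2,2}$ is $\partial_x^2\mathbf{K}(c_\pm(t-\tau),x\mp y)$, whose $L^1_y$-norm is of order $(t-\tau)^{-1}$; with a merely bounded coefficient this corresponds to $\beta=1$ in Lemma \ref{lemlow}, which violates the hypothesis $\beta<1-\alpha$ and in any case gives the power $T^{1+\sigma-\tilde\sigma-\beta}=T^{0}$, i.e.\ no smallness. Alternatively, treating $(\phi-\bar\phi)f_x$ exactly like $F$ via Lemma \ref{mainlem} gives $\|\partial_x f_{2,2}\|_{X_T^{\sigma,p}}\lesssim\|f_x\|_{X_T^{\sigma,p}}$ with an $O(1)$ constant, which cannot be absorbed when Lemma \ref{lemma} is closed. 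So with the information you invoke, the argument does not close.

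What the paper actually uses is the structural hypothesis \eqref{conphi}: $\phi\equiv\bar\phi$ on $[a_n-\varepsilon,a_n+\varepsilon]$, so $(\phi-\bar\phi)(y)=0$ for $|y|\le\varepsilon$, and $|\phi(y)-\bar\phi(y)|=|\phi(y)-\phi(x)|\lesssim C_\phi|x-y|$ for $x\in\mathbb{I}_\varepsilon^n$ and $y$ in a neighbourhood of the jump (estimate \eqref{phiaa}); for $|y|\ge 4\varepsilon$ the kernel is no longer singular since $|x\mp y|\gtrsim\varepsilon$. This is precisely why the left-hand side of the lemma is restricted to $\mathbb{I}_\varepsilon$. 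The factor $|x-y|$ cancels one power of the singularity, giving $\sup_{x\in\mathbb{I}_\varepsilon}\|\,\partial_x^2\mathbf{K}(c_\pm t,x\mp\cdot)(\phi-\bar\phi)(\cdot)\|_{L^1}\lesssim(C_\phi+\varepsilon^{-1})t^{-1/2}$ (i.e.\ $\beta=\tfrac12$), together with the matching H\"older-in-time bound; Lemma \ref{lemlow} then yields the factor $(C_\phi+\varepsilon^{-1})T^{1/2}$, which after fixing $T$ small gives the stated $T^{1/3}$ gain. The same point recurs in the boundary terms $II_{2,l}$ and $II_{3,l}$, where the $|x+y|\ge\varepsilon$ separation (estimate \eqref{tutu}) plays the analogous role. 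Unless you incorporate this vanishing/Lipschitz structure of $\phi-\bar\phi$ near the jump, your proof of the absorption term fails; the rest of your outline is sound.
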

	\begin{proof}Recall that $\mathbb{I}_\varepsilon=\cup_{n=1}^N\mathbb{I}_\varepsilon^n$. It suffices to consider one interval $\mathbb{I}^n_\varepsilon=[a_n-\varepsilon,a_n+\varepsilon]$.  Without loss of generality, we assume $a_n=0$.  The solution has formula \eqref{soforjum}. 
		Using 	integration by parts, one has 
		\begin{align*}
			f_1^+(t,x)&=\int_{\mathbb{R}^+}( \partial_x \mathbf{K}(c_+ t,x-y)+\partial_x \mathbf{K}(c_+ t,x+y))\bar f_0(y) dy\\
			&=\int_{\mathbb{R}}\partial_x \mathbf{K}(c_+ t,x-y)\mathbf{f}_0(y) dy,
		\end{align*}
		where $\mathbf{f}_0(y)$ is the even extension of $\bar f_0\mathbf{1}_{x\geq 0}$
		$$
		\mathbf{f}_0(y)=\bar f_0(y)\mathbf{1}_{y\geq 0}+\bar f_0(-y)\mathbf{1}_{y<0}.
		$$
		It is easy to check that 
		$$
		\|\mathbf{f}_0\|_{\dot C^{2\gamma}}\leq \|\bar f_0\|_{\dot C^{2\gamma}}.
		$$
		Applying Lemma \ref{lemXTl} and Lemma \ref{lemsob} to get
		\begin{equation}\label{f1XT}
			\begin{aligned}
				& \|\partial_x f_1^+\|_{X_T^{1-\gamma,\infty}(\mathbb{R}^+)}\lesssim \|\mathbf{f}_0\|_{\dot C^{2\gamma}}\lesssim \|\bar f_0\|_{\dot C^{2\gamma}},\\
				&\|\partial_x f_1^+\|_{X_T^{\frac{1}{2},2}(\mathbb{R}^+)}+\|\partial_x f_1^+\|_{X_T^{\frac{3}{4},\infty}(\mathbb{R}^+)}\lesssim  \|\partial_x \bar f_0\|_{L^2},\ \ \ \ \ \ \ \|f_1^+\|_{X_T^{\frac{1}{2},2}(\mathbb{R}^+)}\lesssim  \|\bar f_0\|_{L^2},\\
				&\|\partial_x f_1^+\|_{X_T^{\frac{5}{6},\frac{6}{5}}(\mathbb{R}^+)}\lesssim \left(\int_{\mathbb{R}}\int_0^\infty|\mathbf{f}_0(x)-\mathbf{f}_0(x-z)|^\frac{6}{5} \frac{dxdz}{|z|^{7/5}}\right)^{5/6}\lesssim \|\bar f_0\|_{\dot W^{\frac{1}{3},\frac{5}{6}}}.
			\end{aligned}
		\end{equation}
		Recall the formula of boundary term $ f_{1,B}^+$ in \eqref{formubd3}, using  integration by parts one has
		\begin{equation*}
			\begin{aligned}
				f_{1,B}^+=&C_+(c_+-c_-)\int_0^t \int_0^\infty\partial_x^2\mathbf{K}(c_+(t-\tau),x+ y)f_1^-(\tau,-y)dyd\tau\\
				&-C_+\int_0^\infty  \partial_x\mathbf{K}(c_+t,x+y)(\bar f_0(y)-\bar f_0(-y))dy,\\
				:=&I_{1}+I_{2}.
			\end{aligned}
		\end{equation*}
		Applying Lemma \ref{mainlem} to obtain that for $(\sigma,p)\in\{(1-\gamma,\infty),(\frac{1}{2},2),(\frac{3}{4},\infty),(\frac{5}{6},\frac{6}{5})\}$,
		$$\|\partial_x^lI_{1}\|_{X_T^{\sigma,p}}\lesssim  \|\partial_x^l f_1\|_{X_T^{\sigma,p}},\ \ l=1,2.$$
		Moreover, observe that 
		\begin{align*}
			I_{2}=-C_+\int_{\mathbb{R}} \partial_x \mathbf{K}(c_+t,x-y)\bar{\mathbf{f}}_0(y)dy,
		\end{align*}
		where $\bar{\mathbf{f}}_0(y)=(\bar f_0(-y)-\bar f_0(y))\mathbf{1}_{y\leq 0}$. 
		Following the proof of  Lemma \ref{lemXTl} to yield
		\begin{equation*}
			\begin{aligned}
				&\|\partial_xI_{2}\|_{X_T^{1-\gamma,\infty}}\lesssim \|\bar{\mathbf{f}}_0\|_{\dot C^{2\gamma}}\lesssim \|\bar f_0\|_{\dot C^{2\gamma}},\\
				&\|\partial_xI_{2}\|_{X_T^{\frac{1}{2},2}}+\|\partial_xI_{2}\|_{X_T^{\frac{3}{4},\infty}}\lesssim \|\partial_x (\bar f_0(-y)-\bar f_0(y))\mathbf{1}_{y\leq 0}\|_{L^2}\lesssim \|\partial_x \bar f_0\|_{L^2},\\
				&\|I_{2}\|_{X_T^{\frac{1}{2},2}}\lesssim \|\bar f_0\|_{L^2}\lesssim \|\bar f_0\|_{\dot W^{\frac{1}{3},\frac{5}{6}}},
			\end{aligned}
		\end{equation*}
		and 
		\begin{align*}
			&\|\partial_xI_{2}\|_{X_T^{\frac{5}{6},\frac{6}{5}}(\mathbb{R}^+)}\lesssim \left(\int_{\mathbb{R}}\int_{\mathbb{R}^+}|\bar{\mathbf{f}}_0(x)-\bar{\mathbf{f}}_0(x-z)|^\frac{6}{5} \frac{dxdz}{|z|^{7/5}}\right)^{5/6}\lesssim \|\bar f_0\|_{\dot W^{\frac{1}{3},\frac{5}{6}}},
		\end{align*}
		where we applied Lemma \ref{lemsob} in the last inequality. 
		Similar arguments hold for $\partial_x f_{1,B}^-$. Hence 
		\begin{align*}
			&	\|\partial_x f_{1,B}\|_{X_T^{1-\gamma,\infty}}\lesssim \|\partial_x f_1\|_{X_T^{1-\gamma,\infty}}+\|\bar f_0\|_{\dot C^{2\gamma}},\\ &\|\partial_xf_{1,B}\|_{X_T^{\frac{1}{2},2}}+\|\partial_xf_{1,B}\|_{X_T^{\frac{3}{4},\infty}}\lesssim \|\partial_xf_{1}\|_{X_T^{\frac{1}{2},2}}+\|\partial_xf_{1}\|_{X_T^{\frac{3}{4},\infty}}+\|\partial_x \bar f_0\|_{L^2},\\
			&	\| f_{1,B}\|_{X_T^{\frac{1}{2},2}}+\|\partial_x f_{1,B}\|_{X_T^{\frac{5}{6},\frac{6}{5}}}\lesssim \|\bar f_0\|_{\dot W^{\frac{1}{3},\frac{6}{5}}}+	\| f_{1}\|_{X_T^{\frac{1}{2},2}}+\|\partial_x f_{1}\|_{X_T^{\frac{5}{6},\frac{6}{5}}}.
		\end{align*}
		Combining this with \eqref{f1XT}, we obtain that 
		\begin{equation}
			\begin{aligned}\label{fLXT}
				&\|\partial_x f_L\|_{X_T^{1-\gamma,\infty}}\lesssim \|\bar f_0\|_{\dot C^{2\gamma}},\ \ \ \ \ \ \ \|\partial_x f_L\|_{X_T^{\frac{1}{2},2}}+\|\partial_x f_L\|_{X_T^{\frac{3}{4},\infty}}\lesssim \|\partial_x \bar f_0\|_{L^2},\\
				&\| f_L\|_{X_T^{\frac{1}{2},2}}+\|\partial_x f_L\|_{X_T^{\frac{5}{6},\frac{6}{5}}}\lesssim \|\bar f_0\|_{\dot W^{\frac{1}{3},\frac{6}{5}}}.
			\end{aligned}
		\end{equation}
		Then we estimate nonlinar term $f_N=f_2+f_{2,B}$. We first estimate $f_2$. 
		By integration by parts, 
		\begin{equation}\label{f22pa}
			\begin{aligned}
				f_2^\pm(t,x)=&\int_0^t \int_{\mathbb{R}^\pm}(\partial_x \mathbf{K}(c_\pm (t-\tau),x-y)+\partial_x\mathbf{K}(c_\pm (t-\tau),x+y)) F(\tau,y) dyd\tau\\
				&+\int_0^t \int_{\mathbb{R}^\pm}(\partial_x \mathbf{K}(c_\pm (t-\tau),x-y)+\partial_x\mathbf{K}(c_\pm (t-\tau),x+y)) (\phi-\bar\phi)(y)f_x(\tau,y) dyd\tau\\
				=&f_{2,1}^\pm(t,x)+f_{2,2}^\pm(t,x).
			\end{aligned}
		\end{equation}
		By Lemma \ref{mainlem}, we obtain 
		\begin{align}\label{f21}
			\|\partial_x f_{2,1}\|_{X_T^{\sigma,p}}\lesssim \| F\|_{X_T^{\sigma,p}},\ \ \ \ \|\partial_x f_{2,2}\|_{X_T^{\sigma,p}}\lesssim \|\partial_x f\|_{X_T^{\sigma,p}}.
		\end{align}
		And Lemma \ref{lemlow} implies that 
		\begin{align}\label{ccc}
			\|f_{2,1}\|_{X_T^{\frac{1}{2},2}}\lesssim \|F\|_{X_T^{\frac{5}{6},\frac{6}{5}}}.
		\end{align}
		Observe that $\phi(y)-\bar\phi(y)=0$ for $y\in [-\varepsilon,\varepsilon]$. And 
		\begin{align}\label{phiaa}
			|\phi(y)-\bar\phi(y)|=|\phi(y)-\phi(x)|\lesssim C_\phi|x-y|,
		\end{align}
		{for} $x\in (0,\varepsilon), y\in(0,4\varepsilon)$ or $x\in (-\varepsilon,0), y\in(-4\varepsilon,0)$. This implies a better estimate for $ f_{2,2}$ when $x\in(-\varepsilon,\varepsilon)$. For simplicity, we only consider $f_{2,2}^+(t,x)$ with $x\in(0,\varepsilon)$, while $ f_{2,2}^-(t,x)$ with $x\in(-\varepsilon,0)$ can be done similarly.
		We can write 
		\begin{align*}
			\partial_xf_{2,2}^+(t,x)=\int_0^t \int_{\mathbb{R}^+} G(t-\tau,x,y)f_x(\tau,y)dyd\tau,
		\end{align*}
		where
		$$
		G(t,x,y)=(\partial_x^2 \mathbf{K}(c_+ t,x-y)+\partial_x^2\mathbf{K}(c_+ t,x+y)) (\phi-\bar\phi)(y)\mathbf{1}_{x,y\in\mathbb{R}^+}.
		$$ 
		For $x\in (0,\varepsilon)$, $0<s<t<T$,
		\begin{align*}
			&	\int_{\mathbb{R}} |G(t,x,y)|dy\lesssim C_\phi\int _{|y|\leq 4\varepsilon}|\partial_x^2\mathbf{K} (c_+t,y)||y|dy+ \int_{|y|\geq 4\varepsilon} |\partial_x^2\mathbf{K} (c_+t,y)|dy,\\
			&	\int_{\mathbb{R}} |G(t,x,y)-G(s,x,y)|dy\lesssim C_\phi\int _{|y|\leq 4\varepsilon}|\partial_x^2\mathbf{K} (c_+t,y)-\partial_x^2\mathbf{K} (c_+s,y)||y|dy\\
			&\quad\quad\quad\quad\quad\quad\quad\quad\quad\quad\quad\quad\quad\quad\quad\quad+ \int_{|y|\geq 4\varepsilon} |\partial_x^2\mathbf{K} (c_+t,y)-\partial_x^2\mathbf{K} (c_+(s,y))|dy.
		\end{align*}
		By Lemma \ref{lemheat} we obtain that 
		\begin{align*}
			&	\int _{|y|\leq 4\varepsilon}|\partial_x^2\mathbf{K} (c_+(t,y))||y|dy\lesssim t^{-\frac{1}{2}},\ \ \  \int_{|y|\geq 4\varepsilon} |\partial_x^2\mathbf{K} (c_+(t,y))|dy\lesssim \int_{|y|\geq 4\varepsilon}\frac{1}{(|y|+t^\frac{1}{2})^3}dy\lesssim \varepsilon^{-1}t^{-\frac{1}{2}},\\
			&\int _{|y|\leq 4\varepsilon}|\partial_x^2\mathbf{K} (c_+(t,y))-\partial_x^2\mathbf{K} (c_+(s,y))||y|dy\lesssim t^{-\frac{1}{2}}\min\{1,\frac{t-s}{s}\},\\
			&\int_{|y|\geq 4\varepsilon} |\partial_x^2\mathbf{K} (c_+(t,y))-\partial_x^2\mathbf{K} (c_+(s,y))|dy\lesssim \varepsilon^{-1}t^{-\frac{1}{2}}\min\{1,\frac{t-s}{s}\}.
		\end{align*} Hence 
		\begin{equation*}
			\begin{aligned}
				&	\sup_{x\in(0,\varepsilon)}\int_{\mathbb{R}} |G(t,x,y)|dy\lesssim(C_\phi+\varepsilon^{-1}) t^{-\frac{1}{2}},\\
				& \sup_{x\in(0,\varepsilon)}\int_{\mathbb{R}} |G(t,x,y)-G(s,x,y)|dy\lesssim(C_\phi+\varepsilon^{-1}) t^{-\frac{1}{2}}\min\{1,\frac{t-s}{s}\}.
			\end{aligned}
		\end{equation*}
		Then  Lemma \ref{lemlow} implies that 
		\begin{align*}
			&	\|\partial_xf_{2,2}\|_{X_T^{\sigma,p}([-\varepsilon,\varepsilon])}\lesssim  (C_\phi +\varepsilon^{-1}) T^\frac{1}{2}\|f_x\|_{X_T^{\sigma,p}},\\
			&	\|f_{2,2}\|_{X_T^{\frac{1}{2},2}([-\varepsilon,\varepsilon])}\lesssim (C_\phi +\varepsilon^{-1}) T^\frac{1}{2}\|f_x\|_{X_T^{\frac{5}{6},\frac{6}{5}}}.
		\end{align*}
		Combining this with \eqref{f21}, \eqref{ccc} to yield
		\begin{equation}
			\begin{aligned}\label{f2XT}
				&\|\partial_x f_2\|_{X_T^{\sigma,p}([-\varepsilon,\varepsilon])}\lesssim \| F\|_{X_T^{\sigma,p}}+ (C_\phi+\varepsilon^{-1}) T^\frac{1}{2}\|f_x\|_{X_T^{\sigma,p}},\\
				&\|f_2\|_{X_T^{\frac{1}{2},2}([-\varepsilon,\varepsilon])}\lesssim \|F\|_{X_T^{\frac{5}{6},\frac{6}{5}}}+ (C_\phi +\varepsilon^{-1}) T^\frac{1}{2}\|f_x\|_{X_T^{\frac{5}{6},\frac{6}{5}}}.
			\end{aligned}
		\end{equation}
		For boundary terms, we can write 
		\begin{equation}\label{forf2b}
			\begin{aligned}
				\partial_x^lf_{2,B}^+=&	C_+(c_--c_+)\int_0^t\int_0^\infty\partial_x^2\mathbf{K}(c_+(t-\tau),x+ y)(\partial_x^lf_2^-)(\tau,-y)\mathbf{1}_{[0,\varepsilon]}dyd\tau\\
				&+C_+(c_--c_+)\int_0^t\int_0^\infty\partial_x^2\mathbf{K}(c_+(t-\tau),x+ y)(\partial_x^lf_2^-)(\tau,-y)\mathbf{1}_{(\varepsilon,\infty)}dyd\tau\\
				&+C_+\int_0^t\int_0^\infty \partial_x^{1+l}\mathbf{K}(c_+(t-\tau),x+ y)(\tilde F(\tau,y)-\tilde F(\tau,-y))dyd\tau\\
				:=&II_{1,l}+II_{2,l}+II_{3,l}.
			\end{aligned}
		\end{equation}
		Applying Lemma \ref{mainlem} to get
		\begin{align}\label{II1}
			\|II_{1,l}\|_{X_T^{\sigma,p}}\lesssim \|\partial_x^l f_2\|_{X_T^{\sigma,p}([-\varepsilon,\varepsilon])}. 
		\end{align}
		One can check from Lemma \ref{lemheat} that 
		\begin{align}\label{tutu}
			\sup_{x\in\mathbb{R}^+}\int \partial_x^2 \mathbf{K}(c_+t,x+y) \mathbf{1}_{[\varepsilon,\infty)}dy \lesssim  \int _0^\infty \frac{1}{(t^\frac{1}{2}+y)^3}dy\lesssim  \varepsilon^{-1} t^{-\frac{1}{2}}.
		\end{align}
		Then Lemma \ref{lemlow} implies that 
		\begin{equation}
			\begin{aligned}\label{II2}
				&\|II_{2,1}\|_{X_T^{\sigma,p}}\lesssim \varepsilon^{-1} T^\frac{1}{2}\|\partial_x f_{2}\|_{X_T^{\sigma,p}}\overset{\eqref{f21}}\lesssim \varepsilon^{-1} T^\frac{1}{2}(\|F\|_{X_T^{\sigma,p}}+\|\partial_x f\|_{X_T^{\sigma,p}}),\\
				&\|II_{2,0}\|_{X_T^{\frac{1}{2},2}}\lesssim\varepsilon^{-1} T^\frac{1}{2}\| f_{2}\|_{X_T^{\frac{1}{2},2}}\overset{\eqref{f21}}\lesssim\varepsilon^{-1} T^\frac{1}{2}(\|F\|_{X_T^{\frac{5}{6},\frac{6}{5}}}+\|\partial_x f\|_{X_T^{\frac{5}{6},\frac{6}{5}}}).
			\end{aligned}
		\end{equation}
		Finally, following the estimate for $f_2$, we can prove that 
		\begin{equation}	\label{II3}
			\begin{aligned}
				&\|II_{3,0}\|_{X_T^{\frac{1}{2},2}([0,\varepsilon])}\lesssim \| F\|_{X_T^{\frac{5}{6},\frac{6}{5}}}+ (C_\phi+\varepsilon^{-1}) T^\frac{1}{2}\|f_x\|_{X_T^{\frac{5}{6},\frac{6}{5}}},\\
				&	\|II_{3,1}\|_{X_T^{\sigma,p}([0,\varepsilon])}\lesssim \| F\|_{X_T^{\sigma,p}}+ (C_\phi+\varepsilon^{-1}) T^\frac{1}{2}\|f_x\|_{X_T^{\sigma,p}}.
			\end{aligned}
		\end{equation}
		One can estimate $f_{2,B}^-$ similarly.
		By \eqref{II1}, \eqref{II2} and \eqref{II3}, we obtain that 
		\begin{align*}
			&	\|\partial_x f_{2,B}\|_{X_T^{\sigma,p}([-\varepsilon,\varepsilon])}\lesssim \|\partial_x f_2\|_{X_T^{\sigma,p}([-\varepsilon,\varepsilon])}+ (1+\varepsilon^{-1} T^\frac{1}{2})\| F\|_{X_T^{\sigma,p}}+ (C_\phi+\varepsilon^{-1}) T^\frac{1}{2}\|\partial_xf\|_{X_T^{\sigma,p}},\\
			&		\| f_{2,B}\|_{X_T^{\frac{1}{2},2}([-\varepsilon,\varepsilon])}\lesssim \|\partial_x f_2\|_{X_T^{\frac{5}{6},\frac{6}{5}}([-\varepsilon,\varepsilon])}+ (1+\varepsilon^{-1} T^\frac{1}{2})\| F\|_{X_T^{\frac{5}{6},\frac{6}{5}}}+ (C_\phi+\varepsilon^{-1}) T^\frac{1}{2}\|\partial_xf\|_{X_T^{\frac{5}{6},\frac{6}{5}}}.
		\end{align*}
		Combining this with \eqref{f2XT}, we obtain that 
		\begin{equation}
			\begin{aligned}\label{fNXT}
				&\|\partial_x f_N\|_{X_T^{\sigma,p}([-\varepsilon,\varepsilon])}\lesssim (1+\varepsilon^{-1} T^\frac{1}{2}) \| F\|_{X_T^{\sigma,p}}+ (C_\phi+\varepsilon^{-1}) T^\frac{1}{2}\|f_x\|_{X_T^{\sigma,p}},\\
				&\| f_N\|_{X_T^{\frac{1}{2},2}([-\varepsilon,\varepsilon])}\lesssim (1+\varepsilon^{-1} T^\frac{1}{2}) \| F\|_{X_T^{\frac{5}{6},\frac{6}{5}}}+ (C_\phi+\varepsilon^{-1}) T^\frac{1}{2}\|f_x\|_{X_T^{\frac{5}{6},\frac{6}{5}}}.
			\end{aligned}
		\end{equation}
		Finally, we estimate $f_R$. 
		For $f_3$, we can write 
		\begin{align*}
			f_3^\pm(t,x)=\int_0^t \int_{\mathbb{R}}  \mathbf{K}(c_\pm (t-\tau),x-y) R^\pm(\tau,y)dyd\tau,
		\end{align*}
		where $R^\pm$ are odd extensions of $R$:
		$$
		R^+(t,x)=R(t,x)\mathbf{1}_{x\geq 0}-R(-x)\mathbf{1}_{x< 0},\ \ \ \ R^-(t,x)=R(t,x)\mathbf{1}_{x< 0}-R(-x)\mathbf{1}_{x\geq 0}.
		$$
		From  Lemma \ref{rem}, we obtain 
		\begin{align*}
			&\|f_3\|_{X_T^{\frac{1}{2},2}}+\|\partial_x f_3\|_{X_T^{\frac{5}{6},\frac{6}{5}}}\lesssim T^\frac{1}{4}(\|R\|_{L^1_{T}}+\|R\|_{X_T^{1,1}}).
		\end{align*} 
		The proof for boundary terms $f^\pm_{3,B}$ are similar as previous arguments. 	We conclude that 	
		\begin{align}\label{fRXT}
			\| f_R\|_{X_T^{\frac{1}{2},2}}+\|\partial_x f_R\|_{X_T^{\frac{5}{6},\frac{6}{5}}}\lesssim T^\frac{1}{4} (\|R\|_{L^1_{T}}+\|R\|_{X_T^{1,1}}).
		\end{align}
		We conclude from \eqref{fLXT}, \eqref{fNXT} and \eqref{fRXT} that 
		\begin{align*}
			\| f\|_{X_T^{\frac{1}{2},2}([-\varepsilon,\varepsilon])}+\|\partial_x f\|_{X_T^{\frac{5}{6},\frac{6}{5}}([-\varepsilon,\varepsilon])}\lesssim &\|\bar f_0\|_{\dot W^{\frac{1}{3},\frac{6}{5}}}+(1+\varepsilon^{-1} T^\frac{1}{2}) \| F\|_{X_T^{\frac{5}{6},\frac{6}{5}}}+ (C_\phi+\varepsilon^{-1}) T^\frac{1}{2}\|f_x\|_{X_T^{\frac{5}{6},\frac{6}{5}}}\\
			&\ \ +T^\frac{1}{4} (\|R\|_{L^1_{T}}+\|R\|_{X_T^{1,1}}),
		\end{align*}
		and, when $R=0$,
		\begin{align*}
			&\|\partial_x f\|_{X_T^{1-\gamma,\infty}([-\varepsilon,\varepsilon])}\lesssim \|\bar f_0\|_{\dot C^{2\gamma}}+(1+\varepsilon^{-1} T^\frac{1}{2}) \|F\|_{X_T^{1-\gamma,\infty}}+( C_\phi+\varepsilon^{-1} ) T^{\frac{1}{2}}\|\partial_x f\|_{X_T^{1-\gamma,\infty}},\\
			&\|\partial_x f\|_{X_T^{\frac{1}{2},2}([-\varepsilon,\varepsilon])}+\|\partial_x f\|_{X_T^{\frac{3}{4},\infty}([-\varepsilon,\varepsilon])}\lesssim \|\partial_x \bar f_0\|_{L^2}+(1+\varepsilon^{-1} T^\frac{1}{2}) (\| F\|_{X_T^{\frac{1}{2},2}}+\| F\|_{X_T^{\frac{3}{4},\infty}})\\&\quad\quad\quad\quad\quad\quad\quad\quad\quad\quad\quad\quad\quad\quad\quad\quad\quad\quad\quad+ (C_\phi+\varepsilon^{-1}) T^\frac{1}{2}(\|f_x\|_{X_T^{\frac{1}{2},2}}+\|f_x\|_{X_T^{\frac{3}{4},\infty}}).
		\end{align*}
		Fix $T=\frac{1}{(1+C_\phi+\varepsilon^{-1})^{10}}$, we obtain the results. This completes the proof.
	\end{proof}\vspace{0.3cm}\\
	The following lemma gives estimates of  $L^p$ norms near jump points. 
	\begin{lemma}\label{lemLPjum}
		Suppose $f_0=\partial_x\bar f_0$.	There exists $T>0$ such that 
		\begin{align*}
			\|f\|_{L^2_{T}(\mathbb{I}_\varepsilon)}+	\|\partial_x f\|_{L^\frac{6}{5}_{T}(\mathbb{I}_\varepsilon)}\lesssim \|\bar f_0\|_{\dot W^{\frac{1}{3},\frac{6}{5}}}+ \| F\|_{L^\frac{6}{5}_{T}}+T^\frac{1}{3}\|\partial_x f\|_{L^\frac{6}{5}_{T}}+T^{\frac{1}{4}}\|R\|_{L^1_{T}}.
		\end{align*}
		Moreover, if $R=0$, there holds 
		\begin{align*}
			\|f\|_{L^6_{T}(\mathbb{I}_\varepsilon)}+	\|\partial_xf\|_{L^2_{T}(\mathbb{I}_\varepsilon)}\lesssim \|f_0\|_{L^2}+\|F\|_{L^2_{T}}+T^\frac{1}{3}\|\partial_x f\|_{L^2_{T}}.
		\end{align*}
	\end{lemma}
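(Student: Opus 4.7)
The plan is to mirror the proof of Lemma~\ref{lemXTjum} verbatim, using the same decomposition \eqref{soforjum} of the solution $f = f_L + f_N + f_R$ near a single jump point (which without loss of generality is placed at $a_n = 0$), but replacing every $X_T^{\sigma,p}$ H\"older-in-time norm by its pure $L^p_T$ analog. Since the spatial structure of each piece is identical, the task is to replace only the time estimates with their $L^p_T$ counterparts.

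For the linear piece $f_L^+ = f_1^+ + f_{1,B}^+$, I rewrite $f_1^+(t,x) = \int \partial_x\mathbf{K}(c_+t, x-y)\mathbf{f}_0(y)\,dy$ with $\mathbf{f}_0$ the even extension used in Lemma~\ref{lemXTjum}. Young's convolution inequality in spacetime together with Lemma~\ref{lemheat} gives $\|\partial_x\mathbf{K}\|_{L^{6/5}((0,T)\times\mathbb{R})} \lesssim T^{1/4}$, and combined with Lemma~\ref{lemsob} this yields $\|\partial_x f_1\|_{L^{6/5}_T} \lesssim \|\bar f_0\|_{\dot W^{1/3,6/5}}$. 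The $L^2_T$ bound on $\partial_x f_1$ and the $L^6_T$ bound on $f_1$ (under the assumption $R=0$) come from standard heat maximal regularity starting from $f_0 \in L^2$. The boundary contribution $f_{1,B}^+$ decomposes as $I_1 + I_2$ as in Lemma~\ref{lemXTjum}; $I_2$ reduces to another heat convolution against an antisymmetric extension controlled by Lemma~\ref{lemsob}, while $I_1$ is a maximal-regularity integral that inherits its $L^p_T$ bound from $f_1^-$ already in hand.

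For the nonlinear piece $f_N = f_2 + f_{2,B}$, I use the splitting $f_2 = f_{2,1} + f_{2,2}$ of \eqref{f22pa}. The piece $f_{2,1}$ is a standard heat convolution; by classical heat maximal regularity $\|\partial_x f_{2,1}\|_{L^{6/5}_T} \lesssim \|F\|_{L^{6/5}_T}$ and $\|\partial_x f_{2,1}\|_{L^2_T} \lesssim \|F\|_{L^2_T}$, while the mixed-norm bounds $\|f_{2,1}\|_{L^2_T(\mathbb{I}_\varepsilon)}$ and $\|f_{2,1}\|_{L^6_T(\mathbb{I}_\varepsilon)}$ follow by Sobolev embedding. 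For $f_{2,2}$ I exploit that $(\phi-\bar\phi)(y)=0$ on $[-\varepsilon,\varepsilon]$ and the Lipschitz bound \eqref{phiaa} on $[-4\varepsilon,4\varepsilon]$; the extra $|y|$ factor absorbs the singularity of $\partial_x^2\mathbf{K}$, while the fast decay of $\partial_x^2\mathbf{K}$ outside $[-\varepsilon,\varepsilon]$ yields an $\varepsilon^{-1}$ gain, and Lemma~\ref{lemlow} then produces the small-$T$ self-referential factor on $\|\partial_x f\|_{L^{6/5}_T}$ and $\|\partial_x f\|_{L^2_T}$. The boundary term $f_{2,B}$ is decomposed as in \eqref{forf2b}, and the three summands $II_{1,l}, II_{2,l}, II_{3,l}$ are bounded by the same arguments in the $L^p_T$ setting, with Lemma~\ref{lemupg} used to pass from the shifted kernel $\mathbf{K}(c_+(t-\tau), x+y)$ to integrals of $\|\tilde F(\cdot, y)\|_{L^q_T}$.

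For the source term $f_R$, writing $f_3^\pm$ as a heat convolution against the odd extension $R^\pm \in L^1_{t,x}$, Young's inequality in spacetime yields $\|\partial_x f_3\|_{L^{6/5}_T} \lesssim \|\partial_x\mathbf{K}\|_{L^{6/5}((0,T)\times\mathbb{R})}\|R\|_{L^1_T} \lesssim T^{1/4}\|R\|_{L^1_T}$, and likewise $\|f_3\|_{L^2_T(\mathbb{I}_\varepsilon)} \lesssim T^{1/4}\|R\|_{L^1_T}$ using $\|\mathbf{K}\|_{L^2_{t,x}} \lesssim T^{1/4}$. The boundary contribution $f_{3,B}$ is treated identically, with one more application of Lemma~\ref{lemupg} for the shifted-kernel piece. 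The main obstacle is purely organizational: one must verify that every constant depends only on $c_0, C_\phi, \varepsilon$ (never on $f$ itself), so that the self-referential small-$T$ factors coming from $f_{2,2}$ and its boundary analogs can be absorbed on the left by fixing $T = T(C_\phi, \varepsilon)$ small enough. Beyond that bookkeeping, every ingredient has already been established for the $X_T^{\sigma,p}$ analog in Lemma~\ref{lemXTjum}.
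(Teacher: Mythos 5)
Your outline follows the paper's own proof essentially step for step: the same decomposition $f=f_L+f_N+f_R$ from \eqref{soforjum} near a jump point, even/odd extensions controlled by Lemma \ref{lemsob}, Lemma \ref{lemupg} for the boundary integrals $I_1$, $II_{1,l}$, $f_{3,B}$, and the smallness in $T$ extracted from the vanishing of $\phi-\bar\phi$ on $[-\varepsilon,\varepsilon]$ together with the decay of $\partial_x^2\mathbf{K}$ outside it. Three tools are, however, misattributed in your sketch, and each should be replaced by what the paper actually uses. First, the bound $\|\partial_x f_1\|_{L^{6/5}_T}\lesssim\|\bar f_0\|_{\dot W^{1/3,6/5}}$ does not follow from Young's inequality with $\|\partial_x\mathbf{K}\|_{L^{6/5}_{t,x}}\lesssim T^{1/4}$ (that pairing needs $\mathbf{f}_0$ in a Lebesgue space and produces no fractional regularity); one must exploit $\int\partial_x^2\mathbf{K}\,dx=0$ to write the kernel against differences $\Delta_z\mathbf{f}_0$ and run the Minkowski--H\"older argument \eqref{aplinearLp} of Lemma \ref{lemap}, then Lemma \ref{lemsob}; the $L^2_T$ bounds come from Parseval \eqref{paraL2} (and, for the $L^2_T$ bound of $I_2$, Hardy's inequality, Lemma \ref{lemhardy}). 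Second, Lemma \ref{lemlow} is an $X_T^{\sigma,p}$ statement: it neither accepts $L^p_T$ input nor controls $L^{6/5}_T$ at the borderline $(\sigma,p)=(\frac56,\frac65)$, $\sigma p=1$; for $f_{2,2}$ and $II_{3,l}$ in the $L^p_T$ setting the correct tool is Young's inequality with the spacetime kernel bound $\sup_{x}\bigl(\int_0^T\int_0^\infty|\partial_x^lG(t,x,y)|^{\frac{3}{2+l}}dydt\bigr)^{\frac{2+l}{3}}\lesssim(C_\phi+\varepsilon^{-1})T^{\frac12}$, which yields exactly the small factor you want. Third, no absorption ``on the left'' is possible or needed: the self-referential terms carry the global norm $\|\partial_x f\|_{L^p_T(\mathbb{R})}$ while the left-hand side is only over $\mathbb{I}_\varepsilon$, so they simply remain on the right with prefactor $(C_\phi+\varepsilon^{-1})T^{1/2}\leq T^{1/3}$ after fixing $T=T(C_\phi,\varepsilon)$ small, the genuine absorption being performed only later when Lemmas \ref{lemXTjum}--\ref{lemLPint} are combined in Lemma \ref{lemma}. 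With these repairs your plan coincides with the paper's proof.
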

	\begin{proof} ~\\
		\textit{1.Estimate $f_L$.}\\
		For simplicity, we only consider $f_1^+\mathbf{1}_{x\geq 0}$, while $f_1^-\mathbf{1}_{x< 0}$ can be done similarly. Integrate by parts we obtain 
		\begin{align*}
			f_1^+(t,x)
			&=\int _{\mathbb{R}}\partial_x\mathbf{K}(b_+ t,x-y)\mathbf{f}_0(y)dy,\ \ \ \text{where}\ \mathbf{f}_0(y)=\bar f_0(y)\mathbf{1}_{y\geq 0}+\bar f_0(-y)\mathbf{1}_{y< 0}.
		\end{align*}
		By Parseval's identity (see \eqref{paraL2}), we have for any $T>0$,
		\begin{equation}\label{L21}
			\begin{aligned}
				&\|f_1^+\mathbf{1}_{x\geq 0}\|_{L^2_{T}}\sim \|\mathbf{f}_0\|_{L^2}\lesssim \|\bar f_0\|_{L^2}\lesssim \|\bar f_0\|_{\dot W^{\frac{1}{3},\frac{6}{5}}},\\ &\|\partial_x f_1^+\mathbf{1}_{x\geq 0}\|_{L^2_{T}}\sim \|\partial_x\mathbf{f}_0\|_{L^2}\lesssim \|f_0\|_{L^2}.
			\end{aligned}
		\end{equation}
		Then following \eqref{aplinearLp} in Lemma \ref{lemap}, we obtain from Lemma \ref{lemsob} that 
		\begin{equation}\label{L22}
			\begin{aligned}
				& \|	\mathbf{1}_{x\geq 0}\partial_x f_1^+\|_{L^{6/5}_{T}} ^{6/5}\lesssim \int_{\mathbb{R}} \frac{\| \mathbf{ f}_0(\cdot)-\mathbf{ f}_0(\cdot-z)\|_{L^{6/5}(\mathbb{R}^+)}^{6/5}}{|z|^{\frac{7}{5}}}dz\lesssim  \|\bar f_0\|_{\dot W^{\frac{1}{3},\frac{6}{5}}}^{6/5},\\
				&\|	\mathbf{1}_{x\geq 0} f_1^+\|_{L^6_{T}} ^6\lesssim \int_{\mathbb{R}} \frac{\| \mathbf{ f}_0(\cdot)- \mathbf{ f}_0(\cdot-z)\|_{L^6(\mathbb{R}^+)}^6}{|z|^{5}}dz\lesssim  \|\bar f_0\|_{\dot W^{\frac{2}{3},6}}^6\lesssim \|\bar f_0\|_{\dot W^{1,2}}^6=\|f_0\|_{L^2}^6.
			\end{aligned}
		\end{equation}
		By \eqref{L21} and \eqref{L22} we obtain that 
		\begin{align}\label{f1}
			\|f_1\|_{L^2_{T}}+\|\partial_x f_1\|_{L^\frac{6}{5}_{T}}\lesssim \|\bar f_0\|_{\dot W^{\frac{1}{3},\frac{6}{5}}},\ \ \ \ \ \  \|f_1\|_{L^6_{T}}+\|\partial_x f_1\|_{L^2_{T}}\lesssim \|f_0\|_{L^2}.
		\end{align}
		Then we estimate $f_{1,B}$. Recall the formula
		\begin{align*}
			f_{1,B}^+(t,x)=&C_+(c_+-c_-)\int_0^t \int_0^\infty\partial_x^2\mathbf{K}(c_+(t-\tau),x+ y)f_1^-(\tau,-y)dyd\tau\\
			&\quad\quad\quad-C_+\int \partial_x\mathbf{K}(c_+t,x-y)\bar{\mathbf {f}_0}(y)dy\\
			:=&I_{1}(t,x)+I_{2}(t,x).
		\end{align*}
		From Lemma \ref{lemheat} one has $\left(\int_0^\infty|\partial_x\mathbf{K}(c_+\tau,x+y)|^2d\tau\right)^\frac{1}{2}\lesssim \frac{1}{x+y}$.  Applying Lemma \ref{lemupg} yields that 
		\begin{align}\label{A1}
			\|\partial_x^lI_1\|_{L^p_{T}}\lesssim \|\partial_x^lf_1\|_{L^p_{T}},\ \ \ l=0,1,\ p>1.
		\end{align}
		Moreover, recall that $\bar{\mathbf{f}}_0(y)=(\bar f_0(-y)-\bar f_0(y))\mathbf{1}_{y\leq 0}$. We can write
		\begin{align}\nonumber
			\|I_2\|_{L^2_{T}}^2&\lesssim \int_0^\infty\left(\int_0^\infty \|\partial_x\mathbf{K}(c_+\tau,x+y)\|_{L^2_t}(\bar f_0(y)-\bar f_0(-y))dy\right)^2dx\\
			&\lesssim \int_0^\infty\left(\int_0^\infty(\bar f_0(y)-\bar f_0(-y))\frac{dy}{x+y}\right)^2dx\lesssim \|\bar f_0\|_{L^2}^2,\label{A2}
		\end{align}
		where we apply Lemma \ref{lemhardy} in the last inequality. Similarly, we obtain 
		\begin{align}\label{DA2}
			\|\partial_x I_2\|_{L^2_{T}}\lesssim \|f_0\|_{L^2}.
		\end{align}
		It remains to estimate $\|\partial_x I_2\|_{L^\frac{6}{5}_{T}}$ and $\|I_2\|_{L^6_{T}}$. Following \eqref{aplinearLp} in Lemma \ref{lemap}, one gets
		\begin{align*}
			\|\partial_x^lI_2\|_{L^p_{T}(\mathbf{\mathbb{R}^+})}\lesssim 
			\int_{\mathbb{R}}\int_0^\infty \frac{| \bar {\mathbf{f}}_0(x)-\bar {\mathbf{f}}_0(x-z)|^p}{|z|^{(1+l)p-1}}dxdz.
		\end{align*}
		Applying Lemma \ref{lemsob} to obtain that 
		\begin{align*}
			\|\partial_xI_2\|_{L^\frac{6}{5}_{T}}\lesssim \|\bar f_0\|_{\dot W^{\frac{1}{3},\frac{6}{5}}}, \ \ \ \ \|I_2\|_{L^6_{T}}\lesssim \|f_0\|_{L^2}.
		\end{align*}
		Similar argument holds for $f_{1,B}^-$. Combining this with \eqref{A1}, \eqref{A2} and \eqref{DA2}, we obtain that 
		\begin{align}\label{bdterm}
			\|f_{1,B}\|_{L^2_{T}}+\|\partial_x f_{1,B}\|_{L^\frac{6}{5}_{T}}\lesssim \|\bar f_0\|_{\dot W^{\frac{1}{3},\frac{6}{5}}},\ \ \ \ \  \|f_{1,B}\|_{L^6_{T}}+\|\partial_x f_{1,B}\|_{L^2_{T}}\lesssim \|f_0\|_{L^2}.
		\end{align}
		By \eqref{f1} and \eqref{bdterm} we obtain that 
		\begin{equation}\label{jumLpL}
			\begin{aligned}
				\|f_L\|_{L^2_{T}}+\|\partial_x f_L\|_{L^\frac{6}{5}_{T}}\lesssim \|\bar f_0\|_{\dot W^{\frac{1}{3},\frac{6}{5}}},\ \ \ \ \ \ \  \| f_L\|_{L^6_{T}}+\|\partial_x f_L\|_{L^2_{T}}\lesssim \|f_0\|_{L^2}.
			\end{aligned}
		\end{equation}
		\textit{2. Estimate $f_N$.}\\
		We first estimate $f_2^\pm=f_{2,1}^\pm+f_{2,2}^\pm$ as defined in \eqref{f22pa}. For simplicity, we only consider $\partial_x^lf_2^+$, and the other  part $\partial_x^lf_2^-$ can be done parallelly.
		Note that
		\begin{align*}
			f_{2,1}^+(t,x)&=\int_0^t \int_{\mathbb{R}^+}( \partial_x\mathbf{K}(c_+ (t-\tau),x-y)+\partial_x\mathbf{K}(c_+ (t-\tau),x+y))F(\tau,y) dyd\tau\\
			&=\int_0^t \int_{\mathbb{R}} \partial_x\mathbf{K}(c_+(t-\tau),x-y)( F(\tau,y)\mathbf{1}_{y\geq 0}+ F(\tau,-y)\mathbf{1}_{y< 0}) dyd\tau.
		\end{align*}
		Following the proof of Lemma \ref{lemap}, we obtain that  
		\begin{align}\label{f21lp}
			\|\partial_x^lf_{2,1}\|_{L^p_{T}}\lesssim \| F\|_{L^\frac{3p}{3+(1-l)p}_{T}}, \ \text{for}\ l=0,1,\ \ p>1.
		\end{align}
		Then we consider $f_{2,2}$. Recall that 
		\begin{align*}
			\partial_x^lf_{2,2}^+(t,x)=\int_0^t \int_0^\infty \partial_x^lG(t-\tau,x,y)f_x(\tau,y)dyd\tau,\ \ \ l=0,1,
		\end{align*}
		where
		$$
		G(t,x,y)=(\partial_x \mathbf{K}(c_\pm t,x-y)+\partial_x\mathbf{K}(c_\pm t,x+y)) (\phi-\bar\phi)(y).
		$$ 
		Then Lemma \ref{lemap} yields that 
		\begin{align}\label{rrr}
			\|\partial_x^lf_{2,1}\|_{L^p_{T}}\lesssim \| f_x\|_{L^\frac{3p}{3+(1-l)p}_{T}}, \ \text{for}\ l=0,1,\ \ p>1.
		\end{align}
		By \eqref{phiaa} and Lemma \ref{lemheat} we can check that 
		\begin{align*}
			\sup_{x\in[0,\varepsilon]}\left(\int_0^T\int_0^\infty|\partial_x^l G(t,x,y)|^\frac{3}{2+l}dydt\right)^\frac{2+l}{3}\lesssim ( C_\phi +\varepsilon^{-1})T^\frac{1}{2}.
		\end{align*}
		Applying Young's inequality we obtain that 
		\begin{align*}
			\|\partial_x^l f_{2,2}\|_{L^p_{T}( [0,\varepsilon])}&\lesssim \sup_{x\in[0,\varepsilon]}\left(\int_0^T\int_0^\infty|\partial_x^l G(t,x,y)|^\frac{3}{2+l}dydt\right)^\frac{2+l}{3}\|f_x\|_{L^\frac{3p}{3+(1-l)p}_{T}}\\
			&\lesssim( C_\phi +\varepsilon^{-1})T^\frac{1}{2} \|f_x\|_{L^\frac{3p}{3+(1-l)p}_{T}}.
		\end{align*}
		Similarly, we have 
		\begin{align*}
			\|\partial_x^l f_{2,2}\|_{L^p_{T}(\mathbb{I}_{\varepsilon})}\lesssim ( C_\phi +\varepsilon^{-1}) T^\frac{1}{2} \|f_x\|_{L^\frac{3p}{3+(1-l)p}_{T}}.
		\end{align*}
		Combining this with \eqref{f21lp}, we obtain that 
		\begin{align}\label{f2lp}
			\|\partial_x^l f_{2}\|_{L^p_{T}(\mathbb{I}_{\varepsilon})}\lesssim\| F\|_{L^\frac{3p}{3+(1-l)p}_{T}}+( C_\phi +\varepsilon^{-1}) T^\frac{1}{2} \|f_x\|_{L^\frac{3p}{3+(1-l)p}_{T}}.
		\end{align}
		Then we consider the boundary term $f_{2,B}$. Recall the formula $\partial_x^l	f_{2,B}^+=II_{1,l}+II_{2,l}+II_{3,l}$ in \ref{forf2b}.
		We apply Lemma \ref{lemupg} to obtain that 
		$$\|II_{1,l}\|_{L^p_{T}}\lesssim \|\partial_x^l f_2\|_{L^p_{T}([0,\varepsilon])}.$$
		For ${II}_{2,l}$, the kernel is not singular in view of the fact that $x+y\geq \varepsilon$. Applying Young's inequality to obtain 
		\begin{align*}
			\|II_{2,l}\|_{L^p_{T}}\lesssim \|\partial_x^l f_2\|_{L^p_{T}}\int_0^t \int_\varepsilon^\infty |\partial_t \mathbf{K}(c_+\tau,y)|dyd\tau\overset{\eqref{tutu}}\lesssim \varepsilon^{-1}T^{\frac{1}{2}}\|\partial_x^l f_2\|_{L^p_{T}}.
		\end{align*}
		One can estimate $II_{3,l}$ similarly as $\partial_x^lf_{2}$. We conclude that 
		\begin{align*}
			&\|\partial_x^l	f_{2,B}^+\|_{L^p_{T}([0,\varepsilon])}\\
			&	\lesssim \|\partial_x^l f_2\|_{L^p_{T}([0,\varepsilon])}+\varepsilon^{-1}T^{\frac{1}{2}} \|\partial_x^lf_2\|_{L^p_{T}}+ \| F\|_{L^\frac{3p}{3+(1-l)p}_{T}}+( C_\phi +\varepsilon^{-1}) T^\frac{1}{2} \|f_x\|_{L^\frac{3p}{3+(1-l)p}_{T}}\\
			&\overset{\eqref{rrr}} \lesssim\|\partial_x^l f_2\|_{L^p_{T}([0,\varepsilon])}+(1+\varepsilon^{-1}T^\frac{1}{2})\| F\|_{L^\frac{3p}{3+(1-l)p}_{T}}+( C_\phi +\varepsilon^{-1}) T^\frac{1}{2} \|f_x\|_{L^\frac{3p}{3+(1-l)p}_{T}}.
		\end{align*}
		Combining this with \eqref{f2lp}, we get 
		\begin{align}\label{jumLpN}
			\|\partial_x^l f_N\|_{L^p_{T}(\mathbb{I}_{\varepsilon})}\lesssim (1+\varepsilon^{-1}T^\frac{1}{2})\| F\|_{L^\frac{3p}{3+(1-l)p}_{T}}+( C_\phi +\varepsilon^{-1}) T^\frac{1}{2} \|f_x\|_{L^\frac{3p}{3+(1-l)p}_{T}}.
		\end{align}
		\textit{3. Estimate $f_{R}$.}\\
		Recall the formula 
		\begin{align*}
			\partial_x^lf_3^\pm(t,x)=\int_0^t \int_{\mathbb{R}^\pm}( \partial_x^l\mathbf{K}(c_\pm (t-\tau),x-y)-\partial_x^l\mathbf{K}(c_\pm (t-\tau),x+y))  R(\tau,y) dyd\tau,
		\end{align*}
		and 
		\begin{align*}
			\partial_x^lf_{3,B}^+(t,x)=&C_+(c_+-c_-)\int_0^t \int_0^\infty\partial_x^2\mathbf{K}(c_+(t-\tau),x+ y)\partial_x^lf_3^-(\tau,-y)dyd\tau\\
			&-C_+\int_0^t \int_0^\infty  \partial_x^l\mathbf{K}(c_+(t-\tau),x+y)(R(\tau,y)+R(\tau,-y)))dy.
		\end{align*}
		It follows from Young's inequality and Lemma \ref{lemupg} that 
		\begin{align*}
			\|f_R\|_{L^2_{T}}+\|\partial_xf_R\|_{L^\frac{6}{5}_{T}}\lesssim \| \mathbf{K}\|_{L^2_{T}}\|R\|_{L^1_{T}}+\|\partial_x \mathbf{K}\|_{L^\frac{6}{5}_{T}}\|R\|_{L^1_{T}}\lesssim T^\frac{1}{4}\|R\|_{L^1_{T}}.
		\end{align*}
		Combining this with \eqref{jumLpL} and \eqref{jumLpN}, we obtain the result by taking $T=\frac{1}{(10+C_\phi+\varepsilon^{-1})^{10}}$. 
		This completes the proof.
	\end{proof}
	\subsection{Interior estimates}
	In this part we estimate the solution $f$ on domain $\mathbb{R}\backslash\mathbb{I}_\varepsilon$. The solution to \eqref{eqpara} satisfies (see Lemma \ref{forcont}) 
	\begin{align}
		f(t,x)=&\mathbf{H} (t,\cdot,x)\ast f_0(\cdot)+\int_0^t\partial_{1}\mathbf{H} (t-\tau,\cdot,x)\ast  F(\tau,\cdot)d\tau+\int_0^t\mathbf{H} (t-\tau,\cdot,x)\ast  R(\tau,\cdot)d\tau\nonumber\\
		&-\int_0^t\int \partial_1\mathbf{H} (t-\tau,x-y,x)\left({ \phi(x)}-{ \phi(y)}\right)\partial_x f(\tau,y)dyd\tau\nonumber\\
		:=&f_{L}(t,x)+f_N(t,x)+f_R(t,x)+f_E(t,x),\label{sofor1}
	\end{align}
	where we denote $\mathbf{H} (t,x,y)=\mathbf{K}({ \phi(y)}{t},x)$,  $\partial_1\mathbf{H}(t,x,y)=\partial_x\mathbf{H}(t,x,y)$ and $\partial_2\mathbf{H}(t,x,y)=\partial_y\mathbf{H}(t,x,y)$. We approximate the kernel of equation \eqref{eqpara} by $\mathbf{H}(t,x,y)$ and $f_E$ is the error term.
	
	\begin{lemma} \label{lemXTinte} Let $f$ satisfy \eqref{sofor1} with data $f_0=\partial_x \bar f_0$. 
		There exists $T>0$ such that 
		\begin{align*}
			\| f\|_{X_T^{\frac{1}{2},2}(\mathbb{R}\backslash\mathbb{I}_{\varepsilon})}+\| \partial_x f\|_{X_T^{\frac{5}{6},\frac{6}{5}}(\mathbb{R}\backslash\mathbb{I}_{\varepsilon})}\lesssim \|\bar f_0\|_{\dot W^{\frac{1}{3},\frac{6}{5}}}+\|F\|_{X_T^{\frac{5}{6},\frac{6}{5}}}+T^{\frac{1}{4}}(\|R\|_{X_T^{1,1}}+\|R\|_{L_{T}^1})+T^\frac{1}{3}\|\partial_x f\|_{X_T^{\frac{5}{6},\frac{6}{5}}}.
		\end{align*}
		Moreover, if $R=0$, then
		\begin{align*}
			&	\|\partial_xf\|_{X_T^{\frac{1}{2},2}(\mathbb{R}\backslash\mathbb{I}_{\varepsilon})}+\|\partial_xf\|_{X_T^{\frac{3}{4},\infty}(\mathbb{R}\backslash\mathbb{I}_{\varepsilon})}\lesssim \| f_0\|_{L^2}+\|F\|_{X_T^{\frac{1}{2},2}}+\|F\|_{X_T^{\frac{3}{4},\infty}}+T^{\frac{1}{3}}(\|\partial_xf\|_{X_T^{\frac{1}{2},2}}+\|\partial_xf\|_{X_T^{\frac{3}{4},\infty}}),\\
			&	\|\partial_xf\|_{X_T^{1-\gamma,\infty}(\mathbb{R}\backslash\mathbb{I}_{\varepsilon})}\lesssim \|\bar{f}_0\|_{\dot{C}^{2\gamma}}+\|F\|_{X_T^{1-\gamma,\infty}}+T^{\frac{1}{3}}\|\partial_xf\|_{X_T^{1-\gamma,\infty}}.
		\end{align*}
	\end{lemma}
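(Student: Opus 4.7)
The formula \eqref{sofor1} splits $f$ into a frozen-coefficient linear piece $f_L$, a source piece $f_N$ driven by $F$, a rough source piece $f_R$ driven by $R$, and an error piece $f_E$ that accounts for the spatial variation of the coefficient in the kernel $\mathbf{H}(t,x,y)=\mathbf{K}(\phi(y)t,x)$. My plan is to control $f_L$, $f_N$, $f_R$ by direct application of the three ``frozen-kernel'' remarks (Remark \ref{remlem3.2}, Remark \ref{rem2.2}, and the remark following Lemma \ref{rem}), and to treat $f_E$ as a small perturbation absorbed into the left-hand side through a factor of $T^{1/3}$.

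First I verify that $\mathbf{H}$ satisfies the structural hypotheses of those remarks, uniformly for $y\in\mathbb{R}\setminus\mathbb{I}_\varepsilon$. Because $c_0\leq\phi\leq c_0^{-1}$ and $|\phi'(y)|\leq C_\phi$ on that set, each derivative of $\mathbf{H}$ with respect to its third slot produces a factor $\phi'(y)t\,\partial_t$, which by Lemma \ref{lemheat} is absorbed into a Gaussian bound of the same order with $\zeta=c_0$; the matching time-increment bounds are obtained the same way. With these bounds in hand, Remark \ref{remlem3.2} applied to $f_L=\int \partial_2\mathbf{H}(t,x-y,x)\bar f_0(y)\,dy$ (obtained by integrating the identity $f_0=\partial_x\bar f_0$ by parts) delivers the $\bar f_0$ estimates. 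For $f_N$ I differentiate once in $x$, use the identity $\partial_1^2\mathbf{H}=\phi(y)^{-1}\partial_t\mathbf{H}$ to convert the spatial Laplacian into a time derivative, and invoke Remark \ref{rem2.2} to obtain the $F$ estimates; the cross term produced by differentiating $\phi(x)$ in the third slot carries an extra $\phi'(x)t$ and is again subordinate to $\partial_t\mathbf{K}$. For $f_R$ the remark following Lemma \ref{rem} directly delivers the $T^{1/4}$-weighted bound on $R$.

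The main obstacle is the error piece
\begin{equation*}
f_E(t,x)=-\int_0^t\!\int_{\mathbb{R}} \partial_1\mathbf{H}(t-\tau,x-y,x)\bigl(\phi(x)-\phi(y)\bigr)\partial_x f(\tau,y)\,dy\,d\tau,
\end{equation*}
restricted to $x\in\mathbb{R}\setminus\mathbb{I}_\varepsilon$. I view it as an operator of the type treated by Lemma \ref{lemlow} with kernel $G(s,z,x)=\partial_1\mathbf{H}(s,z,x)\bigl(\phi(x)-\phi(x-z)\bigr)$. To invoke that lemma I need $\sup_x\|G(s,\cdot,x)\|_{L^r}\lesssim s^{-\beta}$ with $\beta<1-\alpha$, together with the matching time-increment bound. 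The coefficient factor $|\phi(x)-\phi(x-z)|$ is handled by splitting into two cases: when the segment $[x-z,x]$ avoids every $a_n$, the Lipschitz bound $|\phi(x)-\phi(x-z)|\leq C_\phi|z|$ cancels one $z$-derivative of $\partial_1\mathbf{H}$ and reduces the singularity of the kernel by one parabolic order; when the segment crosses a jump, the uniform bound $|\phi(x)-\phi(x-z)|\leq 2c_0^{-1}$ is paired with the automatic inequality $|z|\geq\varepsilon$ (since $x\in\mathbb{R}\setminus\mathbb{I}_\varepsilon$ already forces $\mathrm{dist}(x,\{a_n\})\geq\varepsilon$), so that the heat-kernel tail produces only an $\varepsilon^{-1}$-loss. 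Summing these two contributions gives $\sup_x\|G(s,\cdot,x)\|_{L^r}\lesssim (C_\phi+\varepsilon^{-1})s^{-\beta}$ with $\beta$ strictly below the critical exponent dictated by the target norms, and Lemma \ref{lemlow} then produces a $T^{1/3}$ gain. Assembling the four pieces and choosing $T=T(c_0,C_\phi,\varepsilon)$ small enough to absorb the $T^{1/3}\|\partial_x f\|$ and $(C_\phi+\varepsilon^{-1})T^{1/2}\|\cdot\|$ terms into the left-hand side completes the proof.
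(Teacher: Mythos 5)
Your proposal follows essentially the same route as the paper's proof: the same four-term decomposition $f_L+f_N+f_R+f_E$, verification that the frozen kernel $\mathbf{H}$ (including the extra $t\phi'(x)\partial_t\partial_x\mathbf{K}$ contribution from differentiating the frozen slot) satisfies the hypotheses of Remark \ref{remlem3.2}, Remark \ref{rem2.2} and the remark after Lemma \ref{rem}, and control of $f_E$ via Lemma \ref{lemlow} using the Lipschitz bound when no jump is crossed and the $|z|\geq\varepsilon$ tail bound otherwise, with a small power of $T$ absorbing $\|\partial_x f\|$. The only details you leave implicit, which the paper spells out, are that the undifferentiated bound $\|f_N\|_{X_T^{\frac{1}{2},2}}\lesssim\|F\|_{X_T^{\frac{5}{6},\frac{6}{5}}}$ and the estimate of $\partial_x f_E$ (whose kernel must be differentiated in all its $x$-slots, producing a harmless $\phi'(x)$ term) are likewise obtained from Lemma \ref{lemlow}, exactly as in your treatment of $f_E$.
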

	\begin{proof}
		We first estimate
		\begin{align*}
			f_L(t,x)=\int_{\mathbb{R}} \mathbf{H}(t,x-y,x)f_0(y) dy=\int_{\mathbb{R}} \partial_1  \mathbf{H}(t,x-y,x)\bar f_0(y) dy.
		\end{align*} 
		One has 
		\begin{align*}
			\partial_x f_L(t,x)=\int_{\mathbb{R}} (\partial_1^2  \mathbf{H}(t,x-y,x)+ \partial_1\partial_2\mathbf{H}(t,x-y,x))\bar f_0(y) dy.	
		\end{align*}
		Note that $\mathbf{H}(t,x,y)$ is a time rescaling of heat kernel. We can write
		\begin{align*}
			\partial_1^2  \mathbf{H}(t,z,x)+ \partial_1\partial_2\mathbf{H}(t,z,x)=\partial_x^2 \mathbf{K}({\phi(x)t},z)+t\phi'(x)\partial_t \partial_x\mathbf{K}(\phi(x)t,z):=\bar G(t,z,x).
		\end{align*}
		By Lemma \ref{lemheat}, it is easy to check that $\bar G(t,z,x)$ satisfies the conditions in Remark \ref{remlem3.2}.
		Then Lemma \ref{lemXTl} and Remark \ref{remlem3.2} imply that 
		\begin{equation}\label{I1XT}
			\begin{aligned}
				&\|	 f_L\|_{X_T^{\frac{1}{2},2}(\mathbb{R}\backslash\mathbb{I}_{\varepsilon})}+	\|	\partial_x f_L\|_{X_T^{\frac{5}{6},\frac{6}{5}}(\mathbb{R}\backslash\mathbb{I}_{\varepsilon})}\lesssim \|\bar f_0\|_{\dot W^{\frac{1}{3}, \frac{6}{5}}},\ \ \ \ \ \ \ \|	\partial_x f_L\|_{X_T^{1-\gamma,\infty}(\mathbb{R}\backslash\mathbb{I}_{\varepsilon})}\lesssim \|\bar{f}_0\|_{\dot{C}^{2\gamma}},
				\\
				&	\|	\partial_x f_L\|_{X_T^{\frac{1}{2},2}(\mathbb{R}\backslash\mathbb{I}_{\varepsilon})}+\|	\partial_x f_L\|_{X_T^{\frac{3}{4},\infty}(\mathbb{R}\backslash\mathbb{I}_{\varepsilon})}\lesssim \|\partial_x\bar f_0\|_{L^2}.
			\end{aligned}
		\end{equation}
		Then we estimate the nonlinear term 
		\begin{equation*}
			f_N(t,x)=
			\int_0^t\int_{\mathbb{R}} \partial_{1}\mathbf{H} (t-\tau,x-y,x) F(\tau,y)dyd\tau.
		\end{equation*}
		We have 
		\begin{align*}
			\partial_x f_N(t,x)=\int_0^t \int_{\mathbb{R}} \bar G (t-\tau,x-y,x)F(\tau,y)dyd\tau.
		\end{align*}
		By Remark \ref{rem2.2} we have 
		\begin{align}\label{fNXT1}
			\left\|	\partial_x f_N\right\|_{X_T^{\sigma,p}}\lesssim \|F\|_{X_T^{\sigma,p}},\ \ \ \forall \ \sigma\in(0,1-\alpha), \ p\in[1,\infty].
		\end{align}
		Since
		\begin{align*}
			\sup_{z\in\mathbb{R}\backslash\mathbb{I}_{\varepsilon}} \|\partial_1\mathbf{H}(t,\cdot,z)\|_{L^{\frac{3}{2}}}\lesssim t^{-\frac{2}{3}}.
		\end{align*}
		Then 	Lemma \ref{lemlow} implies that
		$$
		\left\|\int_0^t \int_{\mathbb{R}} \partial_1\mathbf{H} (t-\tau,x-y,x)F(\tau,y)dyd\tau\right\|_{X_T^{\frac{1}{2},2}(\mathbb{R}\backslash\mathbb{I}_{\varepsilon})}\lesssim \|F\|_{X_T^{\frac{5}{6},\frac{6}{5}}}.
		$$
		Combining this with \eqref{fNXT1} to obtain that
		\begin{equation}
			\begin{aligned}\label{DfNXT}
				&	\|f_N\|_{X_T^{\frac{1}{2},2}(\mathbb{R}\backslash\mathbb{I}_{\varepsilon})}+\|\partial_x f_N\|_{X_T^{\frac{5}{6},\frac{6}{5}}(\mathbb{R}\backslash\mathbb{I}_{\varepsilon})}\lesssim\|F\|_{X_T^{\frac{5}{6},\frac{6}{5}}},
				\\
				&\|\partial_xf_N\|_{X_T^{\frac{1}{2},2}(\mathbb{R}\backslash\mathbb{I}_{\varepsilon})}+\|\partial_xf_N\|_{X_T^{\frac{3}{4},\infty}(\mathbb{R}\backslash\mathbb{I}_{\varepsilon})}\lesssim \|F\|_{X_T^{\frac{1}{2},2}}+\|F\|_{X_T^{\frac{3}{4},\infty}},\\
				&	\|\partial_xf_N\|_{X_T^{1-\gamma,\infty}(\mathbb{R}\backslash\mathbb{I}_{\varepsilon})}\lesssim \|F\|_{X_T^{1-\gamma,\infty}}.
			\end{aligned}
		\end{equation}
		Then we estimate  $$f_R=\int_0^t\int_{\mathbb{R}}\mathbf{H} (t-\tau,x-y,x) R(\tau,y)dyd\tau.$$
		By Lemma \ref{rem}, we obtain that 
		\begin{align}\label{DfRXT}
			\|f_R\|_{X_T^{\frac{1}{2},2}(\mathbb{R}\backslash\mathbb{I}_{\varepsilon})}+\|\partial_x f_R\|_{X_T^{\frac{5}{6},\frac{6}{5}}(\mathbb{R}\backslash\mathbb{I}_{\varepsilon})}\lesssim T^{\frac{1}{4}}(\|R\|_{X_T^{1,1}}+\|R\|_{L_{T}^1}).
		\end{align}
		Finally, we estimate $f_E$. One has
		\begin{align*}
			f_E(t,x)=&-\int_0^t\int_{\mathbb{R}} G_1(t-\tau,x,y)\partial_x f(\tau,y)dyd\tau,
		\end{align*}
		where 
		\begin{align*}
			G_1(t-\tau,x,y)=\partial_1\mathbf{H} (t-\tau,x-y,x)\left({ \phi(x)}-{ \phi(y)}\right).
		\end{align*}
		Note that 
		$$
		|\phi(x)-\phi(y)|\lesssim C_\phi|x-y|,\ \ \ \forall x\in \mathbb{R}\backslash\mathbb{I}_{\varepsilon},\ |y-x|\leq \varepsilon.
		$$
		Hence for any $0<s<t<T$,
		\begin{align*}
			&	\sup_{x\in\mathbb{R}\backslash\mathbb{I}_{\varepsilon} }\int_{\mathbb{R}} |\partial_x G_1(t,x,y)|dy\lesssim(\varepsilon^{-1}+C_\phi) t^{-\frac{1}{2}},\\
			&\sup_{x\in\mathbb{R}\backslash\mathbb{I}_{\varepsilon} }\int_{\mathbb{R}} |\partial_x G_1(t,x,y)-\partial_x G_1(s,x,y)|dy\lesssim(\varepsilon^{-1}+C_\phi) s^{-\frac{1}{2}}\min\{1,\frac{t-s}{s}\},\\
			&\sup_{x\in\mathbb{R}\backslash\mathbb{I}_{\varepsilon} }\| G_1(t,x,\cdot)\|_{L^\frac{3}{2}}dy\lesssim C_{\phi}t^{-\frac{1}{6}},\\
			&\sup_{x\in\mathbb{R}\backslash\mathbb{I}_{\varepsilon} }\ |G_1(t,x,\cdot)- G_1(s,x,\cdot)\|_{L^\frac{3}{2}}\lesssim C_{\phi}t^{-\frac{1}{6}}\min\{1,\frac{t-s}{s}\}.
		\end{align*}
		Combining this with  Lemma \ref{lemlow} to obtain that for $(\sigma,p)\in\{(1-\gamma,\infty),(\frac{1}{2},2),(\frac{3}{4},\infty),(\frac{5}{6},\frac{6}{5})\}$, 
		\begin{align}\label{DfEXT}
			\|\partial_x f_{E}\|_{X_T^{\sigma,p}}\lesssim T^\frac{1}{2}(C_\phi+\varepsilon^{-1})\|\partial_x f\|_{X_T^{\sigma,p}},
		\end{align}
		and 
		\begin{align}\label{DfEXTf}
			\|f_{E}\|_{X_T^{\frac{1}{2},2}}\lesssim T^{\frac{1}{2}}C_\phi\|\partial_x f\|_{X_T^{\frac{5}{6},\frac{6}{5}}}.
		\end{align}
		Take  $T=\frac{1}{(1+C_\phi+\varepsilon^{-1})^{10}}$. Then  the result follows from  \eqref{I1XT}, \eqref{DfNXT}, \eqref{DfRXT}, \eqref{DfEXT} and \eqref{DfEXTf}.
	\end{proof}
	
	\begin{lemma}\label{lemLPint}
		Suppose $f_0=\partial_x\bar f_0$. There exists $T>0$ such that 
		\begin{align*}
			\|f\|_{L^2_{T}(\mathbb{R}\backslash\mathbb{I}_{\varepsilon})}+	\|\partial_x f\|_{L^\frac{6}{5}_{T}(\mathbb{R}\backslash\mathbb{I}_{\varepsilon})}\leq& C_1(\|\bar f_0\|_{\dot W^{\frac{1}{3},\frac{6}{5}}}+ \| F\|_{L^\frac{6}{5}_{T}}+T^{\frac{1}{4}}\|R\|_{L^1_{T}})\\
			&+\frac{1}{10}(\|f\|_{L^2_{T}}+\|\partial_x f\|_{L^\frac{6}{5}_{T}}).
		\end{align*}
		Moreover, if $R=0$, there holds 
		$$
		\|f\|_{L^6_{T}(\mathbb{R}\backslash\mathbb{I}_{\varepsilon})}+\|\partial_xf\|_{L^2_{T}(\mathbb{R}\backslash\mathbb{I}_{\varepsilon})}\leq C_1 (\|f_0\|_{L^2}+\|F\|_{L^2_{T}})+\frac{1}{10}(\|f\|_{L^6_{T}}+\|\partial_x f\|_{L^2_{T}}).
		$$
	\end{lemma}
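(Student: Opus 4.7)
The plan is to mirror the proof of Lemma \ref{lemXTinte}, replacing $X_T^{\sigma,p}$ norms by plain $L^p_T$ norms throughout, and using the same decomposition
\begin{align*}
f = f_L + f_N + f_R + f_E
\end{align*}
coming from the integral formula \eqref{sofor1}. Since $\mathbf{H}(t,z,x)=\mathbf{K}(\phi(x)t,z)$ is a time rescaling of the standard heat kernel with $c_0\leq \phi(x)\leq c_0^{-1}$, all heat-kernel $L^p$ bounds of Lemma \ref{lemheat} transfer to $\mathbf{H}$ up to universal constants, and the mixed derivative identity
\begin{align*}
\partial_1^2\mathbf{H}(t,z,x)+\partial_1\partial_2\mathbf{H}(t,z,x)=\partial_x^2\mathbf{K}(\phi(x)t,z)+t\phi'(x)\partial_t\partial_x\mathbf{K}(\phi(x)t,z)=:\bar G(t,z,x)
\end{align*}
from the proof of Lemma \ref{lemXTinte} again satisfies the hypotheses of Remark \ref{remlem3.2}. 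This is what lets us import the non-jump $L^p_T$ theory (Lemma \ref{lemap}, Lemma \ref{rem}) with only a constant depending on $c_0$.

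First I would handle $f_L(t,x)=\int\partial_1\mathbf{H}(t,x-y,x)\bar f_0(y)\,dy$. Writing $\partial_x f_L=\int\bar G(t,x-y,x)\bar f_0(y)\,dy$ and invoking the $L^p_T$ version of Lemma \ref{lemap} (via the same Sobolev embedding argument used to derive \eqref{L22} in the proof of Lemma \ref{lemLPjum}), I obtain
\begin{align*}
\|f_L\|_{L^2_T(\mathbb{R}\setminus\mathbb{I}_\varepsilon)}+\|\partial_xf_L\|_{L^{6/5}_T(\mathbb{R}\setminus\mathbb{I}_\varepsilon)}\lesssim \|\bar f_0\|_{\dot W^{1/3,6/5}}, \qquad \|f_L\|_{L^6_T}+\|\partial_xf_L\|_{L^2_T}\lesssim\|f_0\|_{L^2}.
\end{align*}
For $f_N$ I apply the same apriori $L^p_T$ estimate to the operator with kernel $\partial_1\mathbf{H}$ (respectively $\bar G$ after one more derivative), exactly as in the derivation of \eqref{f21lp}, yielding
\begin{align*}
\|f_N\|_{L^2_T}+\|\partial_x f_N\|_{L^{6/5}_T}\lesssim \|F\|_{L^{6/5}_T}, \qquad \|f_N\|_{L^6_T}+\|\partial_x f_N\|_{L^2_T}\lesssim \|F\|_{L^2_T}.
\end{align*}
For $f_R$ I use Young's inequality together with the heat kernel $L^p_tL^1_x$ bounds of Lemma \ref{lemheat} (as in the end of the proof of Lemma \ref{lemLPjum}) to get $\|f_R\|_{L^2_T}+\|\partial_xf_R\|_{L^{6/5}_T}\lesssim T^{1/4}\|R\|_{L^1_T}$.

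The main obstacle, as in Lemma \ref{lemXTinte}, is the error term
\begin{align*}
f_E(t,x)=-\int_0^t\int_{\mathbb{R}} \partial_1\mathbf{H}(t-\tau,x-y,x)\bigl(\phi(x)-\phi(y)\bigr)\partial_x f(\tau,y)\,dy\,d\tau.
\end{align*}
For $x\in\mathbb{R}\setminus\mathbb{I}_\varepsilon$ and $|x-y|\leq\varepsilon$ one has the Lipschitz bound $|\phi(x)-\phi(y)|\leq C_\phi|x-y|$, while for $|x-y|>\varepsilon$ one uses $|\phi(x)-\phi(y)|\lesssim c_0^{-1}$ but pays a factor $\varepsilon^{-1}$. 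Estimating the resulting kernel in the appropriate mixed Lebesgue spaces gives, as in the proof of Lemma \ref{lemXTinte},
\begin{align*}
\sup_{x\in\mathbb{R}\setminus\mathbb{I}_\varepsilon}\|(\phi(x)-\phi(\cdot))\partial_1\mathbf{H}(t,x-\cdot,x)\|_{L^{3/2}}\lesssim (C_\phi+\varepsilon^{-1})t^{-1/6},
\end{align*}
and an analogous bound at the $\partial_x$ level. Young's inequality in time together with these kernel bounds yields
\begin{align*}
\|f_E\|_{L^2_T}+\|\partial_x f_E\|_{L^{6/5}_T}\lesssim (C_\phi+\varepsilon^{-1})T^{1/2}\bigl(\|f\|_{L^2_T}+\|\partial_x f\|_{L^{6/5}_T}\bigr),
\end{align*}
and the analogous statement for the $L^6_T, L^2_T$ pair in the case $R=0$. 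The conclusion then follows by choosing $T>0$ small enough (depending on $c_0$, $C_\phi$, $\varepsilon$) that this coefficient is at most $\tfrac{1}{10}$, so that the $f_E$ contribution is absorbed into the left-hand side of the asserted inequality. Combining the four contributions then produces the stated bounds with constant $C_1=C_1(c_0,C_\phi,\varepsilon)$.
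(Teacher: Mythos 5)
Your treatment of the error term $f_E$, of $f_R$, and of the ``off-diagonal'' bounds such as $\|f_N\|_{L^2_T}\lesssim\|F\|_{L^{6/5}_T}$ is fine: those only use pointwise size bounds on the kernel plus Young/Hardy--Littlewood--Sobolev, and they transfer to $\mathbf{H}(t,x-y,x)$ with constants depending on $c_0,C_\phi$. The genuine gap is in the ``diagonal'' space-time estimates you quote for the main terms, namely $\|f_L\|_{L^2_T}\lesssim\|\bar f_0\|_{\dot W^{1/3,6/5}}$, $\|\partial_x f_L\|_{L^2_T}\lesssim\|f_0\|_{L^2}$, $\|\partial_x f_N\|_{L^{6/5}_T}\lesssim\|F\|_{L^{6/5}_T}$ and $\|\partial_x f_N\|_{L^2_T}\lesssim\|F\|_{L^2_T}$. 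In Lemma \ref{lemap} these are exactly the estimates proved by Parseval's identity \eqref{paraL2} and by parabolic Calder\'on--Zygmund theory, i.e.\ by cancellation for a \emph{constant-coefficient} Fourier multiplier; they are not consequences of the pointwise bounds $|\bar G(t,z,x)|\lesssim (t^{1/2}+|z|)^{-3}$, since $\int_0^T\int_{\mathbb{R}}(t^{1/2}+|z|)^{-3}\,dz\,dt$ diverges logarithmically, so no Young-type argument (and no variant of the computation \eqref{aplinearLp}, which at the exponents $(l,p)=(0,2)$ and $(1,p)$ with matching source and target exponent is not available) can replace them. In formula \eqref{sofor1} the frozen parameter $\phi(x)$ varies with the output variable $x$, so the operator is no longer a space-time convolution and neither Parseval nor the multiplier $\xi^2/(i\tau+\xi^2)$ applies as cited; saying ``exactly as in the derivation of \eqref{f21lp}'' does not cover this, because in Lemma \ref{lemLPjum} the coefficient $c_\pm$ really is constant on each half-line. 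The statements you assert are true, but they require an extra argument, e.g.\ freezing the coefficient at points $z_n$ and treating $\phi(x)-\phi(z_n)$ as a perturbation, or writing the variable-parameter operator as $T_{c}\big|_{c=\phi(x)}$ and controlling $\sup_{c\in[c_0,c_0^{-1}]}|T_cF|$ through $T_{c_0}F+\int_{c_0}^{c_0^{-1}}|\partial_cT_cF|\,dc$, where each $\partial_cT_c$ is again a bounded constant-coefficient multiplier.

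This is precisely why the paper does not run the proof of Lemma \ref{lemXTinte} in Lebesgue-in-time norms: its proof of Lemma \ref{lemLPint} cuts off in space with $\chi_n^\ell$, writes a constant-coefficient equation for $f\chi_n^\ell$ with coefficient $\phi(z_n)$, puts the commutators $(\phi(x)-\phi(z_n))\partial_xf\chi_n^\ell$, $\phi(z_n)f\partial_x\chi_n^\ell$, $\phi\partial_xf\partial_x\chi_n^\ell$ into $F_1,R_1$, applies Lemma \ref{lemap} on each piece, and sums, absorbing the commutator contributions by choosing $\ell$ and then $T$ small; even the paper's remark about ``approximating the kernel'' refers to the piecewise-frozen kernel $\sum_n\mathbf{H}(t,x,z_n)\chi_n^\ell(x)$, not to \eqref{sofor1} with fully variable $\phi(x)$. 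The Lipschitz/H\"older-in-time norms $X_T^{\sigma,p}$ tolerate the variable freezing because they only require kernel bounds at fixed times (Remarks \ref{rem2.2} and \ref{remlem3.2}); the $L^p_{t,x}$ norms do not, and your proposal needs to supply that missing cancellation step before the absorption argument at the end can be carried out.
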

	\begin{proof}
		There exists a sequence $\{z_n\}_{n\in\mathbb{Z}}\subset\mathbb{R}\backslash\mathbb{I}_\varepsilon$ such that  $\sum_n\chi_n^\ell(x)=1,\  \forall x\in \mathbb{R}\backslash\mathbb{I}_\varepsilon$ and  $\operatorname{supp}(\chi_j^\ell)\cap \operatorname{supp}(\chi_k^\ell)=\emptyset$ if $|j-k|\geq 2$, where $\chi_n^\ell:\mathbb{R}\to [0,1]$ is a smooth cutoff function  satisfying $\mathbf{1}_{[z_n-\ell/2,z_n+\ell/2]}\leq \chi_n^\ell\leq \mathbf{1}_{[z_n-\ell,z_n+\ell]}$ with $0<\ell<\varepsilon$. 
		We rewrite the equation \eqref{eqpara} as 
		\begin{align*}
			&	\partial_t (f\chi_n^\ell)-\phi(z_n)\partial_x ^2(f\chi_n^\ell)=\partial_x F_1+R_1,\\
			&	(f\chi_n^\ell)_{t=0}=\partial_x(\bar f_0\chi_n^\ell)-\bar f_0\partial_x\chi_n^\ell,
		\end{align*}
		where 
		\begin{align*}
			F_1=F\chi_n^\ell+(\phi(x)-\phi(z_n))\partial_x f\chi_n^\ell+\phi(z_n)f\partial_x \chi_n^\ell,\ \ \ \ R_1=\phi(x)\partial_x f\partial_x \chi_n^\ell
			+R\chi_n^\ell.
		\end{align*}
		Applying Lemma \ref{lemap} to obtain that 
		\begin{align*}
			&\|f\chi_n^\ell\|_{L^2_{T}}+\|\partial_x(f\chi_n^\ell)\|_{L^\frac{6}{5}_{T}}\lesssim \|\bar f_0\chi_n^\ell\|_{\dot W^{\frac{1}{3},\frac{6}{5}}}+T^\frac{1}{3}\|\bar f_0\partial_x\chi_n^\ell\|_{L^\frac{6}{5}}+ \|F_1\|_{L^\frac{6}{5}_{T}}+T^\frac{1}{4}\|R_1\|_{L^1_{T}},
		\end{align*}
		and 
		\begin{align*}
			&\|f\chi_n^\ell\|_{L^6_{T}}+\|\partial_x(f\chi_n^\ell)\|_{L^2_{T}}\lesssim \|f_0\chi_n^\ell\|_{L^2}+\|F_1\|_{L^2_{T}}+T^\frac{1}{4}\ell^{-1}\|R_1\|_{L^2_{T}},\ \ \ \ \text{if}\ R=0.
		\end{align*}
		Note that 
		\begin{align*}
			\|F_1\|_{L^\frac{6}{5}_{T}}&\lesssim \|F\chi_n^\ell\|_{L^\frac{6}{5}_{T}}+\|\phi(x)-\phi(z_n)\|_{L^\infty (B_\ell)}\|\partial_x f\|_{L^\frac{6}{5}_{T}(B_{\ell})}+\|f\|_{L^2_{t,x, T}(B_\ell)}\|\partial_x \chi_n^\ell\|_{L^3_{T}}\\
			&\lesssim  \|F\|_{L^\frac{6}{5}_{T}(B_{\ell})}+\ell C_\phi\|\partial_x f\|_{L^\frac{6}{5}_{T}(B_{\ell})}+\ell^{-\frac{2}{3}}T^\frac{1}{3}\|f\|_{L^2_{T}(B_{\ell})},\\
			\|R_1\|_{L^1_{T}}&\lesssim \|\partial_x f\|_{L^\frac{6}{5}_{T}(B_{\ell})}\|\partial_x \chi_n^\ell\|_{L^4_{T}}+\|R\|_{L^1_{T}(B_\ell)}\\
			&\lesssim T^{\frac{1}{4}}\ell^{-\frac{3}{4}}\|\partial_x f\|_{L^\frac{6}{5}_{T}(B_{\ell})}+\|R\|_{L^1_{T}(B_\ell)},
		\end{align*}
		where we denote $B_\ell=[z_n-\ell,z_n+\ell]$. 
		Moreover, we can check that 
		\begin{align*}
			\sum_n\|\bar f_0\chi_n^\ell\|_{\dot W^{\frac{1}{3},\frac{6}{5}}}\lesssim \|\bar f_0\|_{\dot W^{\frac{1}{3},\frac{6}{5}}}+\ell^{-1}\|\bar f_0\|_{L^2}\lesssim (1+\ell^{-1})\|\bar f_0\|_{\dot W^{\frac{1}{3},\frac{6}{5}}}.
		\end{align*}
		Taking sum of $n$, we obtain that 
		\begin{align*}
			\|f\|_{L^2_{T}(\mathbb{R}\backslash\mathbb{I}_\varepsilon)}+\|\partial_xf\|_{L^\frac{6}{5}_{T}(\mathbb{R}\backslash\mathbb{I}_\varepsilon)}\leq&\tilde C_1(1+\ell^{-1})\|\bar f_0\|_{\dot W^{\frac{1}{3},\frac{6}{5}}}+\tilde C_1\|F\|_{L^\frac{6}{5}_{T}}+\tilde C_1T^\frac{1}{4}\|R\|_{L^1_{T}}\\
			&+\tilde C_1(\ell C_\phi+T^\frac{1}{3}\ell^{-\frac{3}{4}})(\|\partial_x f\|_{L^\frac{6}{5}_{T}}+\|f\|_{L^2_{T}}) .
		\end{align*}
		Similarly, when $R=0$, we have 
		\begin{align*}
			\|f\|_{L^6_{T}(\mathbb{R}\backslash\mathbb{I}_\varepsilon)}+\|\partial_xf\|_{L^2_{T}(\mathbb{R}\backslash\mathbb{I}_\varepsilon)}\leq&\tilde C_1(1+\ell^{-1})\|f_0\|_{L^2}+\tilde C_1\|F\|_{L^2_{T}}\\
			&+\tilde C_1(\ell C_\phi+\ell^{-1}T^\frac{1}{4})(\|\partial_x f\|_{L^2_{T}}+\|f\|_{L^6_{T}}).
		\end{align*}
		Then we obtain the result by taking $\ell ={(10+\tilde C_1+C_\phi)^{-10}}$ and $T=\ell^{10}$. 
		This completes the proof.
	\end{proof}
	\begin{remark}
		We have two ways to prove Lemma \ref{lemLPint}. One is to approximate the kernel (see \eqref{sofor1}), and the other is to cut off in space. We choose the latter one for simplicity. It is equivalent to approximate the kernel piece wisely, namely, approximate the kernel by $\sum_n \mathbf{H}(t,x,z_n)\chi_n^l(x)$.
	\end{remark}
	\section{Well-posedness for the isentropic Navier–Stokes equations}\label{jum}
	In this section, we consider the Cauchy problem of \eqref{inns} with initial data 
	$
	(v(0,x),u(0,x))=(v_0,u_0)
	$ satisfying 
	\begin{align}\label{inicon}
		v_0\in L^\infty,\ \  \inf_x v_0\geq \lambda_0>0,\ \  u_0=\partial_x\bar u_0\ \text{with}\ \bar u_0\in\dot C^{2\gamma}.
	\end{align}
	The initial data $v_0$ is allowed to have finite jump. More precisely, we assume there exists a sequence  $\{a_n\}_{n=1}^N$ satisfying 
	\begin{align}\label{conccc}
		&\min _{j\neq k} |a_j-a_k|\geq \sigma >0,\ \ \ \
		\|v_0-b_{\varepsilon,\eta}\|_{L^\infty}\leq \varepsilon_0,
	\end{align}
	for some constants $0<\varepsilon,\eta\ll \sigma$. Here 
	\begin{align}\label{defb}
		& b_{\varepsilon,\eta}(x)=\sum _{n=1}^{N+1}\left(v_0\ast\rho_\eta\mathbf{1}_{(a_{n-1}+\varepsilon,a_{n}-\varepsilon)}+(v_0\ast\rho_\eta)(a_n+\varepsilon)\mathbf{1}_{[a_n,a_n+\varepsilon]}+v_0\ast \rho_\eta(a_n-\varepsilon)\mathbf{1}_{[a_n-\varepsilon,a_n]}\right),
	\end{align}
	where we take  $a_{0}=-\infty$ and $a_{N+1}=+\infty$.\\
	The main result is the following 
	\begin{thm}\label{thmisen}
		There exists $\varepsilon_0>0$ such that if  initial data $(v_0,u_0)$ satisfies \eqref{inicon} and  \eqref{conccc}, then the system \eqref{cpns} admits a unique local solution $(v,u)$ in $[0,T]$ satisfying 
		\begin{align*}
			\inf_{t\in[0,T]}\inf_xv(t,x)\geq \frac{\lambda_0}{2},\ \ \ \|v\|_{Y_T}\leq 2\|v_0\|_{L^\infty},\ \ \ \ \|\partial_xu\|_{X_T^{1-\gamma,\infty}}\leq M.	
		\end{align*}
		for some $T,M>0$.
	\end{thm}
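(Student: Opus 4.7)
The plan is to apply the Banach fixed-point theorem to an iteration in which the viscosity coefficient of the linearized equation for $u$ is frozen to the piecewise-constant/piecewise-smooth profile $\bar\phi(x):=\mu/b_{\varepsilon,\eta}(x)$, where $b_{\varepsilon,\eta}$ is given by \eqref{defb}. By construction, $\bar\phi$ satisfies the structural conditions \eqref{conphi} with constants $c_0,C_\phi,\varepsilon$ depending only on $\mu,\lambda_0,\|v_0\|_{L^\infty},\eta,\sigma$; moreover $\|v_0-b_{\varepsilon,\eta}\|_{L^\infty}\leq\varepsilon_0$ by \eqref{conccc}. I would work in the closed ball
$$\mathcal{B}_{T,M}=\{u:\|\partial_xu\|_{X_T^{1-\gamma,\infty}}\leq M\},$$
set $v[u](t,x):=v_0(x)+\int_0^tu_x(\tau,x)\,d\tau$ (which encodes the first equation of \eqref{inns}), and define $\Phi:u\mapsto\tilde u$ by solving
$$\tilde u_t-\partial_x(\bar\phi\,\tilde u_x)=\partial_xF[u],\qquad \tilde u(0)=u_0,\qquad F[u]:=-p(v[u])+\Big(\frac{\mu}{v[u]}-\bar\phi\Big)u_x.$$
A fixed point $u^\ast$ yields the solution $(v[u^\ast],u^\ast)$ of \eqref{inns}.

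For the a priori estimate, apply Lemma \ref{lemma} with $R=0$ to obtain
$$\|\partial_x\tilde u\|_{X_T^{1-\gamma,\infty}}\lesssim\|\bar u_0\|_{\dot C^{2\gamma}}+\|F[u]\|_{X_T^{1-\gamma,\infty}}.$$
Using $p\in W^{2,\infty}$, one gets $\|p(v[u])\|_{X_T^{1-\gamma,\infty}}\lesssim T^{1-\gamma}(1+\|v[u]\|_{Y_T})$. From $v[u]-v_0=\int_0^tu_x\,d\tau$ together with $\|u_x(\tau)\|_{L^\infty}\leq\tau^{\gamma-1}M$ and the subadditivity $t^\gamma-s^\gamma\leq(t-s)^\gamma$, one checks $\|v[u]-v_0\|_{Y_T}\lesssim T^\gamma M$. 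Combined with \eqref{conccc} and the Lipschitz property of $x\mapsto 1/x$ on $[\lambda_0/2,\infty)$, this yields $\|\mu/v[u]-\bar\phi\|_{Y_T}\lesssim\varepsilon_0+T^\gamma M$. A product-rule splitting $(fg)(t)-(fg)(s)=f(t)(g(t)-g(s))+(f(t)-f(s))g(s)$ with $f=\mu/v[u]-\bar\phi$ and $g=u_x$ then produces
$$\Big\|\Big(\frac{\mu}{v[u]}-\bar\phi\Big)u_x\Big\|_{X_T^{1-\gamma,\infty}}\lesssim(\varepsilon_0+T^\gamma M)M.$$
Choosing $M$ comparable to $\|\bar u_0\|_{\dot C^{2\gamma}}+1+\|v_0\|_{L^\infty}$ and then $T,\varepsilon_0$ small so that $\varepsilon_0+T^\gamma M$ is small, I would conclude $\|\partial_x\tilde u\|_{X_T^{1-\gamma,\infty}}\leq M$, $\|v[\tilde u]\|_{Y_T}\leq 2\|v_0\|_{L^\infty}$, and $\inf v[\tilde u]\geq\lambda_0/2$, so $\Phi$ preserves $\mathcal{B}_{T,M}$.

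For the contraction, $\tilde u_1-\tilde u_2$ satisfies the same linear equation with zero initial data and force $F[u_1]-F[u_2]$. Using $v[u_1]-v[u_2]=\int_0^t(u_1-u_2)_x\,d\tau$, the Lipschitz bounds for $p$ and $1/(\cdot)$, and the same product-rule estimates, one obtains
$$\|F[u_1]-F[u_2]\|_{X_T^{1-\gamma,\infty}}\lesssim(T+T^\gamma M+\varepsilon_0)\|\partial_x(u_1-u_2)\|_{X_T^{1-\gamma,\infty}}.$$
Lemma \ref{lemma} applied with $\bar f_0=0$ then gives a contraction constant $\kappa<1$ for $T,\varepsilon_0$ sufficiently small, and the Banach fixed-point theorem delivers the unique $u^\ast\in\mathcal{B}_{T,M}$ with $\Phi(u^\ast)=u^\ast$.

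The main technical obstacle is the precise time-H\"older control of the perturbative product $(\mu/v-\bar\phi)u_x$ in $X_T^{1-\gamma,\infty}$ with its singular weight $s^{1-\gamma+\alpha}$. The factor $\mu/v-\bar\phi$ is small in $Y_T$, of order $\varepsilon_0+T^\gamma M$, while $u_x$ carries only the critical singular weight and no extra smallness. The product-rule splitting must combine the two without losing either the smallness of the first factor or the sharp time weight of the second, which forces one to carefully isolate the perturbation $v-\bar v$ from $v$ itself (whose $Y_T$ norm is only of order $\|v_0\|_{L^\infty}$, not small) and to exploit the Lipschitz-in-time propagation of $v-v_0$ inherited from $u_x\in X_T^{1-\gamma,\infty}$ through the identity $v-v_0=\int_0^tu_x\,d\tau$.
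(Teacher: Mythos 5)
Your proposal is correct and follows essentially the same route as the paper: you freeze the viscosity coefficient to $\mu/b_{\varepsilon,\eta}$, set up the map $w\mapsto u$ solving the linear equation \eqref{eqjum} with forcing $-(p(v))_x+\mu((\tfrac1v-\tfrac1{b_{\varepsilon,\eta}})w_x)_x$, invoke Lemma \ref{lemma} with $R=0$, and close the self-map and contraction estimates via the smallness $\|v-b_{\varepsilon,\eta}\|_{Y_T}\lesssim\varepsilon_0+T^\gamma M$ and the product-rule splitting in $X_T^{1-\gamma,\infty}$, exactly as in the paper's Lemma \ref{lemrejum}. The only differences are cosmetic (notation and the precise choice of $M$).
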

	Clearly, the theorem \eqref{maininns} is as a consequece of  Theorem \ref{thmisen}. \\
	Define the space 
	\begin{align*}
		\mathcal{E}_{T,M}:=\{w:w(0,x)=u_0(x),\|\partial_x w\|_{X_T^{1-\gamma,\infty}}\leq M\}.
	\end{align*}
	Consider $w\in\mathcal{E}_{T,M}$, we define a map $\mathcal{T}w=u$, where $u$ is a solution to the equation 
	\begin{align}\label{eqjum}
		\partial_tu-\mu(\frac{\partial_x u}{b_{\varepsilon,\eta}})_x&=-(p(v))_x+\mu((\frac{1}{v}-\frac{1}{b_{\varepsilon,\eta}})\partial_x w)_x=:\tilde F(v,w)_x,
	\end{align}
	where $v$ is defined by 
	\begin{align}\label{defv}
		v(t,x)=v_0(x)+\int_0^t \partial_x w(s,x)ds.
	\end{align}
	The following is a key lemma to prove Theorem \ref{thmisen}.
	\begin{lemma}\label{lemrejum}
		For any initial data $$v_0\in L^\infty,\ \  \inf_x v_0\geq \lambda_0>0,\ \  u_0=\partial_x\bar u_0\ \text{with}\ \bar u_0\in\dot C^{2\gamma},$$	there exists $\varepsilon_0>0$ such that if \eqref{conccc} holds,  then 
		\begin{align*}
			& \|\partial_x(\mathcal{T}w)\|_{X_T^{1-\gamma,\infty}}\leq M,\ \ \forall w\in \mathcal{E}_{T,M},\\
			&\|\partial_x(\mathcal{T}w_1-\mathcal{T}w_2)\|_{X_T^{1-\gamma,\infty}}\leq \frac{1}{2}\|\partial_x(w_1-w_2)\|_{X_T^{1-\gamma,\infty}}, \ \ \forall w_1,w_2\in \mathcal{E}_{T,M},
		\end{align*}
		for some $T,M>0$ depending on $\varepsilon_0,\lambda_0, \|v_0\|_{L^\infty}$ and $\|\bar u_0\|_{\dot C^{2\gamma}}$.
	\end{lemma}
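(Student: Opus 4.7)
The plan is to apply the parabolic Lemma \ref{lemma} to the linear equation \eqref{eqjum} defining $\mathcal{T}w=u$, with coefficient $\phi(x):=\mu/b_{\varepsilon,\eta}(x)$, source $F=\tilde F(v,w)$, remainder $R=0$, and initial data $u_0=\partial_x\bar u_0$. By the explicit construction \eqref{defb}, $b_{\varepsilon,\eta}$ is piecewise constant on each pair $[a_n-\varepsilon,a_n]$, $[a_n,a_n+\varepsilon]$ and equals the smooth mollification $v_0\ast\rho_\eta$ outside $\mathbb{I}_\varepsilon$, so $\phi$ fulfills \eqref{conphi} with $c_0=c_0(\mu,\lambda_0,\|v_0\|_{L^\infty})$ and $C_\phi=C_\phi(\mu,\lambda_0,\|v_0\|_{L^\infty},\eta)$, which makes Lemma \ref{lemma} applicable with $T\leq T^*$.

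For $w\in\mathcal{E}_{T,M}$, the bound $\|\partial_x w(s)\|_{L^\infty}\leq Ms^{\gamma-1}$ together with \eqref{defv} gives $\|v(t)-v_0\|_{L^\infty}\leq Mt^\gamma/\gamma$ and an analogous Hölder-in-time estimate, so $\|v\|_{Y_T}\leq\|v_0\|_{L^\infty}+CMT^\gamma$. Hence for $T$ small (depending on $M$), $v$ stays in $[\lambda_0/2,2\|v_0\|_{L^\infty}]$ and $\|v-b_{\varepsilon,\eta}\|_{L^\infty}\leq\varepsilon_0+CMT^\gamma$. Lemma \ref{lemma} then yields
$$\|\partial_x u\|_{X_T^{1-\gamma,\infty}}\leq C\bigl(\|\bar u_0\|_{\dot C^{2\gamma}}+\|\tilde F(v,w)\|_{X_T^{1-\gamma,\infty}}\bigr).$$
Since $p\in W^{2,\infty}$, a direct evaluation gives $\|p(v)\|_{X_T^{1-\gamma,\infty}}\leq C(T^{1-\gamma}+MT)$, and the chain-rule bound $\|1/v-1/b_{\varepsilon,\eta}\|_{Y_T}\leq C(\varepsilon_0+MT^\gamma)$ combined with $\|\partial_x w\|_{X_T^{1-\gamma,\infty}}\leq M$ yields
$$\bigl\|\mu(1/v-1/b_{\varepsilon,\eta})\partial_x w\bigr\|_{X_T^{1-\gamma,\infty}}\leq CM(\varepsilon_0+MT^\gamma).$$
Choosing $M=4C\|\bar u_0\|_{\dot C^{2\gamma}}$ and then $\varepsilon_0$ and $T$ sufficiently small closes the inequality $\|\partial_x u\|_{X_T^{1-\gamma,\infty}}\leq M$, proving the self-mapping.

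For the contraction, set $u_i=\mathcal{T}w_i$ and let $v_i$ be defined via \eqref{defv} with $w_i$. The difference $u_1-u_2$ satisfies \eqref{eqjum} with source $\tilde F(v_1,w_1)-\tilde F(v_2,w_2)$ and zero initial data, so Lemma \ref{lemma} again gives
$$\|\partial_x(u_1-u_2)\|_{X_T^{1-\gamma,\infty}}\leq C\|\tilde F(v_1,w_1)-\tilde F(v_2,w_2)\|_{X_T^{1-\gamma,\infty}}.$$
Decomposing
$$\tilde F(v_1,w_1)-\tilde F(v_2,w_2)=-(p(v_1)-p(v_2))+\mu\bigl(\tfrac{1}{v_1}-\tfrac{1}{v_2}\bigr)\partial_x w_1+\mu\bigl(\tfrac{1}{v_2}-\tfrac{1}{b_{\varepsilon,\eta}}\bigr)\partial_x(w_1-w_2),$$
and noting $v_1-v_2=\int_0^t\partial_x(w_1-w_2)(s)\,ds$ together with $p,1/\cdot\in W^{2,\infty}$ on $[\lambda_0/2,2\|v_0\|_{L^\infty}]$, each of the three summands is bounded by a small factor (of type $T^{1-\gamma}+MT^\gamma$, $MT^\gamma$, or $\varepsilon_0+MT^\gamma$ respectively) times $\|\partial_x(w_1-w_2)\|_{X_T^{1-\gamma,\infty}}$. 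Further shrinking $\varepsilon_0$ and $T$ makes the total factor at most $1/2$.

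The main obstacle is verifying the time-Hölder part of the $X_T^{1-\gamma,\infty}$ norm for nonlinear products such as $p(v)$ and $(1/v-1/b_{\varepsilon,\eta})\partial_x w$: because $\partial_x w$ carries a $t^{\gamma-1}$ singularity, the increment $\|v(t)-v(s)\|_{L^\infty}\leq M\int_s^t\tau^{\gamma-1}d\tau$ must be split into $t\leq 2s$ and $t>2s$ and interpolated via $t^\gamma-s^\gamma\lesssim(t-s)^\alpha s^{\gamma-\alpha}$, so as to recover the $(t-s)^\alpha$ factor required by the norm while keeping a positive power of $T$ and using crucially $0<\alpha<\gamma$.
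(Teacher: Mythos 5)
Your proposal is correct and follows essentially the same route as the paper: estimate $v$ from \eqref{defv} to get $\|v-b_{\varepsilon,\eta}\|_{Y_T}\lesssim \varepsilon_0+MT^\gamma$ and the lower bound, apply Lemma \ref{lemma} to \eqref{eqjum}, bound $p(v)$ by a positive power of $T$ and the product term by $(\varepsilon_0+MT^\gamma)M$, then close with the same decomposition of $\tilde F(v_1,w_1)-\tilde F(v_2,w_2)$ for the contraction. Your handling of the time-Hölder part via $t^\gamma-s^\gamma\lesssim(t-s)^\alpha s^{\gamma-\alpha}$ (using $\alpha<\gamma$) is exactly the paper's computation, and your explicit tracking of the $\varepsilon_0$ factor in front of $\partial_x(w_1-w_2)$ in the contraction step is, if anything, slightly more careful than the paper's stated bound \eqref{Q}.
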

	\begin{proof}
		We first show the estimates of $v$. By definition \eqref{defv}, it is easy to check that 
		\begin{align*}
			&\sup_{s\in[0,T]}\|v-b_{\varepsilon,\eta}\|_{L^\infty}\lesssim \|v_0-b_{\varepsilon,\eta}\|_{L^\infty}+\int_0^T\|w_{x}(\tau)\|_{L^\infty}d\tau\lesssim \|v_0-b_{\varepsilon,\eta}\|_{L^\infty}+T^{\gamma}\|\partial_x w\|_{X_T^{1-\gamma,\infty}},\\
			&\sup_{0<s<t<T}s^\alpha\frac{\|v(t)-v(s)\|_{L^\infty}}{(t-s)^\alpha}\lesssim \sup_{0<s<t<T}s^\alpha\frac{\int_s^t\|\partial_x w\|_{L^\infty}d\tau}{(t-s)^\alpha}\\
			&\quad\ \quad\quad\quad\quad\quad\quad\quad \ \ \ \ \ \ \ \ \ \ \ \lesssim \|\partial_x w\|_{X_T^{1-\gamma,\infty}}\sup_{s<t<T}s^\alpha \frac{t^\gamma-s^\gamma}{(t-s)^\alpha}\lesssim T^\gamma\|\partial_x w\|_{X_T^{1-\gamma,\infty}}.
		\end{align*}
		Hence,
		\begin{align*}
			\|v-b_{\varepsilon,\eta}\|_{Y_T}\leq \|v_0-b_{\varepsilon,\eta}\|_{L^\infty}+T^{\gamma}\|\partial_x w\|_{X_T^{1-\gamma,\infty}}\overset{\eqref{conccc}}\leq \varepsilon_0+T^{\gamma}M.
		\end{align*}
		Moreover, we have 
		\begin{align*}
			\inf_{s\in[0,T]}	\inf_x v(s,x)\geq \inf_{x}v_0(x)-\sup_{s\in[0,T]}\|v(s)-v_0\|_{L^\infty}\geq \lambda_0-T^\gamma\|\partial_x w\|_{X_T^{1-\gamma,\infty}}\geq \lambda_0-T^\gamma M.
		\end{align*}
		We can take $T$ small enough such that $T^\gamma M \leq \frac{1}{10}\min\{\varepsilon_0,\lambda_0\}$, then 
		\begin{align}\label{estofv}
			\|v-b_{\varepsilon,\eta}\|_{Y_T}\leq 2\varepsilon_0,\ \ \ \ \ \inf_{s\in[0,T]}	\inf_x v(s,x)\geq \frac{\lambda_0}{2}.
		\end{align}	
		Applying  Lemma \ref{lemma} to \eqref{eqjum}, there exists $T_1>0$ such that 
		\begin{align}\label{haha}
			\|\partial_x (\mathcal{T}w)\|_{X_T^{1-\gamma,\infty}}\lesssim \|\bar u_0\|_{\dot C^{2\gamma}}+\|\tilde F(v,w)\|_{X_T^{1-\gamma,\infty}}, \ \ \ \forall 0<T<T_1.
		\end{align}
		We have 
		\begin{align*}
			\|\tilde F(v,w)\|_{X_T^{1-\gamma,\infty}}\lesssim \|p(v)\|_{X_T^{1-\gamma,\infty}}+\left\|(\frac{1}{v}-\frac{1}{b_{\varepsilon,\eta}})\partial_x w\right\|_{X_T^{1-\gamma,\infty}}.
		\end{align*}
		Note that $p\in W^{2,\infty}$, hence 
		\begin{align*}
			&\sup_{t\in[0,T]}t^{1-\gamma}\|p(v)(t)\|_{L^\infty}\lesssim 	T^{1-\gamma},\\
			&\frac{\|p(v)(t)-p(v)(s)\|_{L^\infty}}{(t-s)^\alpha}\lesssim \frac{\|v(t)-v(s)\|_{L^\infty}}{(t-s)^\alpha}\lesssim s^{-\alpha}\|v-b_{\varepsilon}\|_{Y_T},\ \ \forall 0<s<t<T.
		\end{align*}
		Hence,
		\begin{align}\label{PXT}
			\|p(v)\|_{X_T^{1-\gamma,\infty}}\lesssim 	T^{1-\gamma}(1+\|v-b_{\varepsilon}\|_{Y_T}).
		\end{align}
		Denote $q(t,x)=(\frac{1}{v(t,x)}-\frac{1}{b_{\varepsilon,\eta}(x)})\partial_x w(t,x)$, we have 
		\begin{align}\label{qinf}
			\left\|q(t)\right\|_{L^\infty}\lesssim \|v(t)-b_{\varepsilon,\eta}\|_{L^\infty}\|\partial_x w(t)\|_{L^\infty}\lesssim t^{\gamma-1}\|v-b_{\varepsilon,\eta}\|_{Y_T}\|\partial_x w\|_{X_T^{1-\gamma,\infty}}.
		\end{align}
		Moreover, for $0<s<t<T$, we have 
		\begin{align*}
			q(t,x)-q(s,x)=(\frac{1}{v(t)}-\frac{1}{v(s)})\partial_x w(t)+(\frac{1}{v(s)}-\frac{1}{b_{\varepsilon,\eta}})(\partial_x w(t)-\partial_x w(s)).
		\end{align*}
		Note that 
		\begin{align*}
			&	\left\|(\frac{1}{v(t)}-\frac{1}{v(s)})\partial_x w(t)\right\|_{L^\infty}\lesssim \|v(t)-v(s)\|_{L^\infty}\|\partial_x w(t)\|_{L^\infty}\\
			&\quad\quad\quad\quad\quad\quad\quad\quad\quad\quad\quad\ \lesssim (t-s)^{\alpha} s^{\gamma-1-\alpha}\|v-b_{\varepsilon,\eta}\|_{Y_T}\|\partial_x w\|_{X_T^{1-\gamma,\infty}},\\
			&	\left\|(\frac{1}{v(s)}-\frac{1}{b_{\varepsilon,\eta}})(\partial_x w(t)-\partial_x w(s))\right\|_{L^\infty}\lesssim \|v(s)-b_{\varepsilon,\eta}\|_{L^\infty}\|w(t)-w(s)\|_{L^\infty}\\
			&\quad\quad\ \quad\quad\quad\quad\quad\quad\quad\quad\quad\quad\quad\quad\quad\lesssim (t-s)^\alpha s^{\gamma-1-\alpha}\|v-b_{\varepsilon,\eta}\|_{Y_T}\|\partial_x w\|_{X_T^{1-\gamma,\infty}}.
		\end{align*}
		Then one gets
		\begin{align}\label{qdif}
			\sup_{0<s<t<T}	s^{1-\gamma+\alpha}\frac{\left\|	q(t)-q(s)\right\|_{L^\infty}}{(t-s)^\alpha}\lesssim \|v-b_{\varepsilon,\eta}\|_{Y_T}\|\partial_x w\|_{X_T^{1-\gamma,\infty}}.
		\end{align}
		Hence we get from \eqref{qinf} and \eqref{qdif} that 
		\begin{align*}
			\|q\|_{X_T^{1-\gamma,\infty}}\lesssim \|v-b_{\varepsilon,\eta}\|_{Y_T}\|\partial_x w\|_{X_T^{1-\gamma,\infty}}.
		\end{align*}
		Combining this with \eqref{PXT} to obtain that 
		\begin{align}\label{tilF}
			\|\tilde F(v,w)\|_{X_T^{1-\gamma,\infty}}&\leq C_1 T^{1-\gamma}(1+\|v-b_{\varepsilon}\|_{Y_T})+ C_1\|v-b_{\varepsilon,\eta}\|_{Y_T}\|\partial_x w\|_{X_T^{1-\gamma,\infty}}.
		\end{align}
		We conclude from  \eqref{estofv}, \eqref{haha} and \eqref{tilF}  that 
		\begin{align*}
			\|\partial_x (\mathcal{T}w)\|_{X_T^{1-\gamma,\infty}}&\leq C_1 \|\bar u_0\|_{\dot C^{2\gamma}}+C_1 T^{1-\gamma}(1+2\varepsilon_0)+2C_1 \varepsilon_0M, \ \ \ \ \ \forall w\in \mathcal{E}_{T,M}.
		\end{align*}
		Then we obtain   $\|\partial_x (\mathcal{T}w)\|_{X_T^{1-\gamma,\infty}}\leq M$  by taking $M=2C_1 \|\bar u_0\|_{\dot C^{2\gamma}}+1$, $\varepsilon_0<\frac{1}{(1+C_1)^{10}}$ and $T<\min\{\frac{1}{(1+C_1)^{10}},T_1\}$. This implies that $\mathcal{T}$ maps $\mathcal{E}_{T,M}$ to itself.\vspace{0.3cm}\\
		It remains to prove that $\mathcal{T}$  is contraction. Consider $w_1,w_2\in 	\mathcal{E}_{T,M}$, let $v_m(t,x)=v_0(x)+\int_0^t \partial_x w_m(s,x)ds$, $m=1,2$. Then it is easy to check that both $v_1$ and $v_2$ satisfy \eqref{estofv}, and 
		\begin{align}\label{difv}
			\|v_1-v_2\|_{Y_T}\lesssim T^\gamma \|\partial_x (w_1-w_2)\|_{X_T^{1-\gamma,\infty}}.
		\end{align}
		Thanks to Lemma \ref{lemma}, there exists $T_2>0$ such that 
		\begin{align}\label{contra}
			\|\partial_x(\mathcal{T}w_1-	\mathcal{T}w_2)\|_{X_T^{1-\gamma,\infty}}\lesssim \|\tilde F(v_1,w_1)-\tilde F(v_2,w_2)\|_{X_T^{1-\gamma,\infty}},\ \ \forall 0<T<T_2.
		\end{align}
		Note that 
		\begin{align*}
			\tilde	F(v_1,w_1)-\tilde F(v_2,w_2)=p(v_2)-p(v_1)+\mu \left(\left(\frac{1}{v_1}-\frac{1}{b_{\varepsilon,\eta}}\right)\partial_xw_1- \left(\frac{1}{v_2}-\frac{1}{b_{\varepsilon,\eta}}\right)\partial_xw_2\right).
		\end{align*}
		We first estimate $p(v_2)-p(v_1)$. Note that $p\in W^{2,\infty}$, hence 
		\begin{align*}
			&\|p(v_2(t))-p(v_1(t))\|_{L^\infty}\lesssim \|v_1-v_2\|_{L^\infty}\lesssim \|v_1-v_2\|_{Y_T},
		\end{align*}
		and \begin{align*}
			&\|p(v_2(t))-p(v_1(t))-(p(v_2(s))-p(v_1(s)))\|_{L^\infty}\\
			&\lesssim\|(v_2-v_1)(t)-(v_2-v_1)(s)\|_{L^\infty}+ \|v_2(t)-v_1(t)\|_{L^\infty}(\|v_1(t)-v_1(s)\|_{L^\infty}+\|v_2(t)-v_2(s)\|_{L^\infty})\\
			&\lesssim (t-s)^\alpha s^{-\alpha}\|v_1-v_2\|_{Y_T}(1+\|v_1-b_{\varepsilon,\eta}\|_{Y_T}+\|v_2-b_{\varepsilon,\eta}\|_{Y_T}).
		\end{align*}
		Hence 
		\begin{align}
			\|p(v_2)-p(v_1)\|_{X_T^{1-\gamma,\infty}}&\lesssim T^{1-\gamma} \|v_1-v_2\|_{Y_T}(1+\|v_1-b_{\varepsilon,\eta}\|_{Y_T}+\|v_2-b_{\varepsilon,\eta}\|_{Y_T})\nonumber\\
			&\overset{\eqref{difv}}\lesssim T\|\partial_x (w_1-w_2)\|_{X_T^{1-\gamma,\infty}}.\label{P}
		\end{align}
		Then we estimate $\mathcal{Q}=\left(\frac{1}{v_1}-\frac{1}{b_{\varepsilon,\eta}}\right)\partial_xw_1- \left(\frac{1}{v_2}-\frac{1}{b_{\varepsilon,\eta}}\right)\partial_xw_2$. We have 
		\begin{align*}
			\mathcal{Q}=\left(\frac{1}{v_1}-\frac{1}{v_2}\right)\partial_xw_1+\left(\frac{1}{v_2}-\frac{1}{b_{\varepsilon,\eta}}\right)\partial_x(w_1-w_2).
		\end{align*}
		For any $0<t<T$, one has 
		\begin{align}\label{111}
			\|\mathcal{Q}(t)\|_{L^\infty}\lesssim \|(v_1-v_2)(t)\|_{L^\infty}\|\partial_x w_1(t)\|_{L^\infty}\lesssim t^{\gamma-1}\|v_1-v_2\|_{Y_T}\|\partial_x w_1\|_{X_T^{1-\gamma,\infty}}.
		\end{align}
		For any $0<s<t<T$,
		\begin{align}
			&	\|\mathcal{Q}(t)-\mathcal{Q}(s)\|_{L^\infty}\nonumber\\
			&\lesssim\|(v_1-v_2)(t)-(v_1-v_2)(s)\|_{L^\infty}\|\partial_x w_1(t)\|_{L^\infty}\nonumber \\
			&\ \ \ \ +\|(v_1-v_2)(t)\|_{L^\infty}(\|v_1(t)-v_1(s)\|_{L^\infty}+\|v_2(t)-v_2(s)\|_{L^\infty})\|\partial_x w_1(t)\|_{L^\infty}\nonumber \\
			&\ \ \ \ +\|(v_1-v_2)(s)\|_{L^\infty}\|\partial_x (w_1(t)-w_1(s))\|_{L^\infty}+\|v_2(t)-v_2(s)\|_{L^\infty}\|\partial_x (w_1-w_2)(t)\|_{L^\infty}\nonumber \\
			&\ \ \ \ +\|v_2(s)-b_{\varepsilon,\eta}\|_{L^\infty} \|\partial_x (w_1-w_2)(t)-\partial_x (w_1-w_2)(s)\|_{L^\infty}\nonumber \\
			&\lesssim (t-s)^\alpha s^{\gamma-1-\alpha}\|v_1-v_2\|_{Y_T}\|\partial_x w_1\|_{X_T^{1-\gamma,\infty}}(1+\|v_1-b_{\varepsilon,\eta}\|_{Y_T}+\|v_2-b_{\varepsilon,\eta}\|_{Y_T})\nonumber \\
			&\quad\quad\quad\quad+(t-s)^\alpha s^{\gamma-1-\alpha}\|v_2-b_{\varepsilon,\eta}\|_{Y_T}\|\partial_x(w_1-w_2)\|_{X_T^{1-\gamma,\infty}}.\label{222}
		\end{align}
		Combining \eqref{difv}, \eqref{111}, \eqref{222} with the fact that $\|v_1-b_{\varepsilon,\eta}\|_{Y_T}+\|v_2-b_{\varepsilon,\eta}\|_{Y_T}\lesssim \varepsilon_0$ and  $\|\partial_x w_1\|_{X_T^{1-\gamma,\infty}}\leq M$, we obtain 
		\begin{align}\label{Q}
			\|\mathcal{Q}\|_{X_T^{1-\gamma,\infty}}\lesssim M T^\gamma \|\partial_x (w_1-w_2)\|_{X_T^{1-\gamma,\infty}}.
		\end{align} 
		We conclude from \eqref{contra}, \eqref{P} and \eqref{Q} that 
		$$
		\|\partial_x(\mathcal{T}w_1-	\mathcal{T}w_2)\|_{X_T^{1-\gamma,\infty}}\leq C_2( M T^\gamma+T)\|\partial_x(w_1-w_2)\|_{X_T^{1-\gamma,\infty}}\leq \frac{1}{2}\|\partial_x(w_1-w_2)\|_{X_T^{1-\gamma,\infty}}.
		$$
		Here we fix $T=\min\left\{\frac{1}{(1+C_1+C_2)^{10}},T_1,T_2\right\}$. This completes the proof.
	\end{proof}
	
	\section{Well-posedness for the full Navier–Stokes system }
	In this section, we consider the Cauchy problem of \eqref{cpns} with initial data 
	$(v,u,\theta)(0,x)=(v_0,u_0,\theta_0)(x)$ satisfying
	\begin{align}\label{conini2}
		\inf_xv_0(x)\geq \lambda_0>0,\ \ \	\|v_0\|_{L^\infty}<\infty,\ \ \|u_0\|_{L^2}<\infty,\ \ \|\theta_0\|_{\dot W^{-\frac{2}{3},\frac{6}{5}}}<\infty.
	\end{align}
	Moreover, we  suppose that there exist $0<\varepsilon,\eta\ll 1 $ such that $v_0$ satisfies the condition \eqref{conccc} for some $\varepsilon_0>0$ that will be fixed later in our proof.
	%
	\begin{thm}\label{thmfull}
		There exists $\varepsilon_0>0$ such that if  initial data $(v_0,u_0,\theta_0)$ satisfies  \eqref{conini2} and \eqref{conccc}, then  the system \eqref{cpns} admits a unique local solution in $[0,T]$ satisfying 
		\begin{align*}
			\inf_{t\in[0,T]}\inf_xv(t,x)\geq \frac{\lambda_0}{2},\ \ \ \|v\|_{Y_T}\leq 2\|v_0\|_{L^\infty},\ \ \ \ \|(u,\theta)\|_{ Z_T}\leq B.	
		\end{align*}
		for some $T,B>0.$
	\end{thm}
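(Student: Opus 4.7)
The plan is to run a Banach fixed point in the ball
\begin{equation*}
\mathcal{F}_{T,B}=\{(w,\vartheta):(w,\vartheta)|_{t=0}=(u_0,\theta_0),\ \|(w,\vartheta)\|_{Z_T}\le B\},
\end{equation*}
mimicking the scheme of Lemma \ref{lemrejum}. Given $(w,\vartheta)\in\mathcal{F}_{T,B}$, first reconstruct the specific volume by $v(t,x)=v_0(x)+\int_0^t\partial_xw(s,x)\,ds$, and then define the map $\mathcal{T}(w,\vartheta)=(u,\theta)$ by solving the two linearized parabolic problems with the jump coefficient $b_{\varepsilon,\eta}$ frozen from \eqref{defb}:
\begin{align*}
&\partial_t u-\mu\,\partial_x\!\left(b_{\varepsilon,\eta}^{-1}\partial_x u\right)=\partial_x F_1,\qquad u|_{t=0}=u_0,\\
&\partial_t\theta-\frac{\kappa}{\mathbf{c}}\,\partial_x\!\left(b_{\varepsilon,\eta}^{-1}\partial_x\theta\right)=\partial_x F_2+R,\qquad \theta|_{t=0}=\theta_0,
\end{align*}
with $F_1=-p(v,\vartheta)+\mu\bigl(v^{-1}-b_{\varepsilon,\eta}^{-1}\bigr)\partial_xw$, $F_2=\tfrac{\kappa}{\mathbf{c}}\bigl(v^{-1}-b_{\varepsilon,\eta}^{-1}\bigr)\partial_x\vartheta$, and $R=-\mathbf{c}^{-1}p(v,\vartheta)\partial_xw+\mu(\mathbf{c}v)^{-1}(\partial_xw)^2$.

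The first preparation is to transfer the smallness of $v_0-b_{\varepsilon,\eta}$ to $v$: using $\|\partial_xw\|_{X_T^{3/4,\infty}}\le B$, exactly the argument of Lemma \ref{lemrejum} gives $\|v-b_{\varepsilon,\eta}\|_{Y_T}\le\varepsilon_0+CT^{1/4}B$ and $\inf v\ge\lambda_0/2$ once $T^{1/4}B$ is small. The second preparation is a one-dimensional Gagliardo–Nirenberg estimate
\begin{equation*}
\|\vartheta(t)\|_{L^\infty}\lesssim\|\vartheta(t)\|_{L^2}^{1/4}\|\partial_x\vartheta(t)\|_{L^{6/5}}^{3/4},
\end{equation*}
(proved by $|\vartheta|^{4/3}\le\tfrac43\int|\vartheta|^{1/3}|\partial_x\vartheta|$ and Hölder with conjugate exponents $6$ and $6/5$), which combined with the time-Hölder pieces of $Z_T$ yields $\|\vartheta\|_{X_T^{3/4,\infty}}\lesssim\|(w,\vartheta)\|_{Z_T}$. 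This is the ingredient that lets $p(v,\vartheta)=K\vartheta/v$ be controlled in every norm needed for the $u$-equation.

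The third step is the nonlinear bookkeeping. For the $u$-equation apply the second half of Lemma \ref{lemma} with $F=F_1$; the contribution of $p(v,\vartheta)$ goes directly via $\lambda_0^{-1}\|\vartheta\|_\star$ into each of $L^2_T, X_T^{1/2,2}, X_T^{3/4,\infty}$, while the difference term gains the small factor $\|v-b_{\varepsilon,\eta}\|_{Y_T}\le 2\varepsilon_0$. For the $\theta$-equation apply the first half of Lemma \ref{lemma} with $F=F_2$ (again small because of $\|v-b_{\varepsilon,\eta}\|_{Y_T}$) and with the quadratic source $R$: by Cauchy–Schwarz $\|(\partial_xw)^2(t)\|_{L^1}=\|\partial_xw(t)\|_{L^2}^2\lesssim t^{-1}\|\partial_xw\|_{X_T^{1/2,2}}^2$ and $\|\vartheta\partial_xw(t)\|_{L^1}\le\|\vartheta(t)\|_{L^2}\|\partial_xw(t)\|_{L^2}\lesssim t^{-1}\|\vartheta\|_{X_T^{1/2,2}}\|\partial_xw\|_{X_T^{1/2,2}}$, which places $R$ in $L^1_T\cap X_T^{1,1}$ with norm $\lesssim B^2$. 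The Hölder-in-time increments of these products are handled by adding and subtracting and using the $X_T^{1/2,2}$ time-Hölder bounds. Gathering everything,
\begin{equation*}
\|(u,\theta)\|_{Z_T}\le C\bigl(\|u_0\|_{L^2}+\|\theta_0\|_{\dot W^{-2/3,6/5}}\bigr)+C\varepsilon_0 B+C(T^{\gamma}+T^{1/4})B^2,
\end{equation*}
so fixing $B=2C(\|u_0\|_{L^2}+\|\theta_0\|_{\dot W^{-2/3,6/5}})+1$ and then choosing $\varepsilon_0$ and $T$ small makes $\mathcal{T}$ map $\mathcal{F}_{T,B}$ into itself.

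The final step is contraction. Write $v_i$ for the specific volumes associated with $(w_i,\vartheta_i)$, $i=1,2$, and differentiate the expressions for $F_1,F_2,R$: every difference decomposes into bilinear pieces in which one factor is already small (either $\varepsilon_0$, or a power of $T$, or a norm of $w_1-w_2,\vartheta_1-\vartheta_2$). The quadratic $R$ term splits as $(\partial_x w_1)^2-(\partial_xw_2)^2=(\partial_x(w_1-w_2))(\partial_x(w_1+w_2))$, producing a factor $B$ times the difference norm, again absorbed by $T^{1/4}$. This yields
\begin{equation*}
\|\mathcal{T}(w_1,\vartheta_1)-\mathcal{T}(w_2,\vartheta_2)\|_{Z_T}\le\bigl(C\varepsilon_0+C(T^\gamma+T^{1/4})B\bigr)\|(w_1-w_2,\vartheta_1-\vartheta_2)\|_{Z_T}\le\tfrac12\|(w_1-w_2,\vartheta_1-\vartheta_2)\|_{Z_T}
\end{equation*}
for the same choice of $\varepsilon_0,T$. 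The unique fixed point is the desired solution, and the claimed bounds on $v$ follow from Step~1. The main obstacle is the $R$-term: one must simultaneously certify it as an element of $L^1_T$ (cheap), of $X_T^{1,1}$ (needs the Hölder-in-time part of $X_T^{1/2,2}$), and turn the quadratic dependence on $B$ into a genuine smallness through a power of $T$, which is the only reason Lemma \ref{lemma} is applied with the factor $T^{1/4}$ in front of $\|R\|_\star$.
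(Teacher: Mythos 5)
Your overall strategy (freeze the coefficient $b_{\varepsilon,\eta}$, reconstruct $v$ from $w$, estimate $v-b_{\varepsilon,\eta}$ in $Y_T$, control the quadratic source $R$ in $L^1_T\cap X_T^{1,1}$ with a $T^{1/4}$ gain, and use the interpolation $\|\vartheta\|_{X_T^{3/4,\infty}}\lesssim\|\vartheta\|_{X_T^{1/2,2}}+\|\partial_x\vartheta\|_{X_T^{5/6,6/5}}$) is the same as the paper's Lemma \ref{lemre2}. However, there is one structural deviation that creates a genuine gap: you place the pressure $p(v,\vartheta)$, built from the \emph{input} temperature $\vartheta$, into the force $F_1$ of the momentum equation. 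The pressure carries no smallness: its contribution to the self-map estimate is of size $C\lambda_0^{-1}\bigl(\|\vartheta\|_{L^2_T}+\|\vartheta\|_{X_T^{1/2,2}}+\dots\bigr)\le C'B$ with $C'$ a fixed constant (the constant of Lemma \ref{lemma} times $K/\lambda_0$), and this term is neither of the form $C(\text{data})$, nor $C\varepsilon_0B$, nor $CT^{\gamma}B^2$; it is simply omitted from your final display. Since $C'$ cannot be made $<1$ by shrinking $T$ or $\varepsilon_0$ or enlarging $B$, the bound $\|(u,\theta)\|_{Z_T}\le B$ does not close. The same defect reappears in the contraction step: $p(v_1,\vartheta_1)-p(v_2,\vartheta_2)$ produces a term $C\lambda_0^{-1}\|\vartheta_1-\vartheta_2\|$ with an $O(1)$ constant, so the Lipschitz factor cannot be pushed below $\tfrac12$.

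The paper avoids this by ordering the two solves: the temperature equation is solved first (its data are only $w,\vartheta,v$), yielding the output bound \eqref{retheta}, namely $\|\theta\|\lesssim\|\bar\theta_0\|_{\dot W^{1/3,6/5}}+\varepsilon_0B+T^{1/4}B^2$, and it is this \emph{output} $\theta$ that is inserted into the pressure of the momentum equation; analogously, in the contraction step the difference $\theta_1-\theta_2$ of outputs is already small (factor $\varepsilon_0$, estimate \eqref{diffrethe}) before it enters the $u$-equation. To repair your argument you should either adopt this sequential structure, or keep your parallel map but work with two different radii (a large one for the $u$-component and a smaller, data-plus-smallness one for the $\theta$-component), which amounts to the same bookkeeping. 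The remaining ingredients of your proposal (the $v$-estimates, the Gagliardo--Nirenberg inequality $\|\vartheta\|_{L^\infty}\lesssim\|\vartheta\|_{L^2}^{1/4}\|\partial_x\vartheta\|_{L^{6/5}}^{3/4}$ and its weighted-in-time consequence, and the placement of $R$ in $L^1_T\cap X_T^{1,1}$) are correct and coincide with the paper's.
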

	Clearly, the theorem \eqref{maincp} is as a consequece of  Theorem \ref{thmfull}. \\
	Define the space
	\begin{align*}
		\mathcal{X}_{T,B}=\{(w,\vartheta):(w,\vartheta)|_{t=0}=(u_0,\theta_0), \|(w,\vartheta)\|_{Z_T}\leq B\}.
	\end{align*}
	Let $(w,\vartheta)\in\mathcal{X}_{T,B}$, we define a map $\mathcal{M}(w,\vartheta)=(u,\theta)$, where $u,\theta$ solve the equations
	\begin{align*}
		&	u_t-\mu(\frac{ u_x}{b_{\varepsilon,\eta}})_x+(p(v,\theta))_x=\mu((\frac{1}{v}-\frac{1}{b_{\varepsilon,\eta}})\partial_x w)_x,\ \ \ \ 
		u(0,x)=u_0(x),
		\\
		&		\theta_{t}-\frac{\kappa}{\mathbf{c}}\left(\frac{\theta_x}{b_{\varepsilon,\eta}}\right)_x=-\frac{p(v,\vartheta)}{\mathbf{c}} w_{x}+\frac{\mu}{\mathbf{c} v}\left(w_{x}\right)^{2}+\frac{\kappa}{\mathbf{c}}((\frac{1}{v}-\frac{1}{b_{\varepsilon,\eta}})\partial_x \vartheta)_x,\ \ \ \theta(0,x)=\theta_0(x),
	\end{align*}
	where $v$ is fixed by 
	\begin{align*}
		v(t,x)=v_0(x)+\int_0^t \partial_x w(s,x)ds,
	\end{align*}
	and the function $b_{\varepsilon,\eta}$ is defined in \eqref{defb}.\\
	The following is the key lemma to prove Theorem \ref{thmfull}.
	\begin{lemma}\label{lemre2}
		There exists $\varepsilon_0>0$ such that if  initial data $(v_0,u_0,\theta_0)$ satisfies  \eqref{conini2} and \eqref{conccc}, then we have 
		\begin{align*}
			& \|\mathcal{M}(w,\vartheta)\|_{Z_T}\leq B,\ \ \forall (w,\vartheta)\in \mathcal{X}_{T,B},\\
			&\|\mathcal{M}(w_1,\vartheta_1)-\mathcal{M}(w_1,\vartheta_2)\|_{Z_T}\leq \frac{1}{2}\|(w_1-w_2,\vartheta_1-\vartheta_2)\|_{Z_T}, \ \ \forall (w_k,\vartheta_k)\in \mathcal{X}_{T,B},\  k=1,2.
		\end{align*}
		for some $T,B>0$.
	\end{lemma}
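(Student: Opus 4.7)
The plan is to adapt the fixed-point scheme of Lemma \ref{lemrejum} to the coupled system, applying Lemma \ref{lemma} separately to the $u$- and $\theta$-equations. First, from $v(t,x) = v_0(x) + \int_0^t \partial_x w(s,x)\,ds$ and the control $\|\partial_x w\|_{X_T^{3/4,\infty}} \leq B$, direct integration in time gives, for $T$ small depending on $B$,
\begin{align*}
\|v - b_{\varepsilon,\eta}\|_{Y_T} \leq 2\varepsilon_0, \qquad \inf_{[0,T]\times\mathbb{R}} v \geq \lambda_0/2.
\end{align*}
Consequently $\|1/v - 1/b_{\varepsilon,\eta}\|_{Y_T}\lesssim \varepsilon_0/\lambda_0^2$ and $1/v$ is uniformly bounded by $2/\lambda_0$.

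Next, apply Lemma \ref{lemma} to the $u$-equation (purely divergence form, $R=0$) using the $L^2$-based estimate to control $\|\partial_x u\|_{L^2_T \cap X_T^{1/2,2} \cap X_T^{3/4,\infty}}$ by $\|u_0\|_{L^2}$ plus the same norms of $F_u = -p(v,\vartheta)+\mu(1/v-1/b_{\varepsilon,\eta})\partial_x w$. Apply Lemma \ref{lemma} to the $\theta$-equation (with $\bar\theta_0 \in \dot W^{1/3,6/5}$) using the first estimate, controlling $\|\theta\|_{L^2_T \cap X_T^{1/2,2}}$ and $\|\partial_x \theta\|_{L^{6/5}_T \cap X_T^{5/6,6/5}}$ by $\|\bar\theta_0\|_{\dot W^{1/3,6/5}}$ plus $F_\theta=(\kappa/\mathbf{c})(1/v-1/b_{\varepsilon,\eta})\partial_x \vartheta$ in those norms plus $T^{1/4}$ times $R_\theta = -p(v,\vartheta) w_x/\mathbf{c} + \mu w_x^2/(\mathbf{c} v)$ in $L^1_T \cap X_T^{1,1}$. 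The linear-in-unknown terms containing $1/v - 1/b_{\varepsilon,\eta}$ pick up the prefactor $\varepsilon_0$; the pressure $p=K\vartheta/v$ is bounded by the $Z_T$-norms of $\vartheta$ together with the lower bound on $v$.

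The quadratic $R_\theta$-source is the delicate piece. For $L^1_T$: $\|\vartheta w_x\|_{L^1_T} \leq \|\vartheta\|_{L^2_T}\|w_x\|_{L^2_T} \leq B^2$ and $\|w_x^2\|_{L^1_T} = \|w_x\|_{L^2_T}^2 \leq B^2$. For $X_T^{1,1}$: the pointwise-in-$s$ part follows from $s^{1/2}\|\vartheta(s)\|_{L^2}\cdot s^{1/2}\|w_x(s)\|_{L^2} \leq B^2$, while the H\"older-in-time part uses the bilinear identity $(fg)(t)-(fg)(s) = (f(t)-f(s))g(t) + f(s)(g(t)-g(s))$ together with the matching exponent $\sigma=\tfrac12$ for both $\vartheta$ and $w_x$ in $X_T^{1/2,2}$. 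Combining all source estimates,
\begin{align*}
\|\mathcal{M}(w,\vartheta)\|_{Z_T} \leq C_*\bigl(\|u_0\|_{L^2}+\|\bar\theta_0\|_{\dot W^{1/3,6/5}}\bigr) + C_*(\varepsilon_0+T^{\gamma_*})(B+B^2),
\end{align*}
for some $\gamma_*>0$. Choosing $B = 2C_*(\|u_0\|_{L^2} + \|\bar\theta_0\|_{\dot W^{1/3,6/5}}) + 1$ and then shrinking $\varepsilon_0$ and $T$ closes the self-mapping. The contraction estimate is obtained by running the identical argument on the system satisfied by $\mathcal{M}(w_1,\vartheta_1) - \mathcal{M}(w_2,\vartheta_2)$, whose source-term differences are bilinear in $(v_1 - v_2,\,w_1 - w_2,\,\vartheta_1 - \vartheta_2)$ times pre-existing $O(B)$ factors, so the same product estimates yield a Lipschitz constant $\leq 1/2$ after further shrinking $T$ and $\varepsilon_0$.

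The main obstacle will be checking the $X_T^{1,1}$ bound on $R_\theta$ cleanly: the scaling weight $s^{1+\alpha}$ in that norm must be exactly matched by the product of two $X_T^{1/2,2}$-weights, which is the very reason the definition of $Z_T$ enforces $\vartheta$ and $\partial_x w$ in $X_T^{1/2,2}$ with identical scaling exponent $1/2$. A secondary bookkeeping challenge is tracking the distribution of weights between the pressure's $X_T^{3/4,\infty}$ estimate (needed for the $u$-equation) and the fact that $Z_T$ only directly controls $\vartheta \in X_T^{1/2,2}$; this is handled by interpolating $\vartheta$ through the $W^{1,6/5}\hookrightarrow L^\infty$ embedding in 1D, using $\partial_x\vartheta \in X_T^{5/6,6/5}$.
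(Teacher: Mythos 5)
Your proposal is correct and follows essentially the same route as the paper: estimate $v$ from the time integral of $\partial_x w$, apply Lemma \ref{lemma} to the $\theta$- and $u$-equations with the $(1/v-1/b_{\varepsilon,\eta})$ terms carrying the $\varepsilon_0$ smallness, bound the quadratic source in $L^1_T$ and $X_T^{1,1}$ by products of $X_T^{1/2,2}$ norms, use the weighted interpolation $\|\vartheta\|_{X_T^{3/4,\infty}}\lesssim\|\vartheta\|_{X_T^{1/2,2}}+\|\partial_x\vartheta\|_{X_T^{5/6,6/5}}$ for the pressure, and then close the self-map and contraction by choosing $B$ from the data and shrinking $\varepsilon_0,T$. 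The only (inessential) deviation is that the paper's map $\mathcal{M}$ puts the newly solved $\theta$, not the input $\vartheta$, into the pressure of the $u$-equation, which is why the paper estimates $\theta$ before $u$; your variant with $p(v,\vartheta)$ yields the same bounds.
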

	\begin{proof}~\\
		1.\textit{Estimate $v$}\\
		We have 
		\begin{align*}
			&\|v(t)-b_{\varepsilon,\eta}\|_{L^\infty}\lesssim \|v_0-b_{\varepsilon,\eta}\|_{L^\infty}+\int_0^t \|w_x(\tau)\|_{L^\infty}d\tau,\\
			& \inf_{t\in[0,T]}\inf_x v(t,x)\geq \inf_{x}v_0(x)-\int_0^t \|w_x(\tau)\|_{L^\infty}d\tau.
		\end{align*}
		By H\"{o}lder's inequality,
		\begin{align*}
			\int_0^T \|w_x(\tau)\|_{L^\infty}d\tau \leq C T^\frac{1}{4}\|w_x\|_{X_T^{\frac{3}{4},\infty}}\leq C T^\frac{1}{4}B.
		\end{align*}
		We take $T$ small enough such that $CT^\frac{1}{4}B\leq \frac{1}{100}\min\{\lambda_0,\varepsilon_0\} $, and  
		\begin{align}\label{es2v}
			\|v-b_{\varepsilon,\eta}\|_{Y_T}\leq 2\varepsilon_0,\ \ \ \ \ \inf_{s\in[0,T]}	\inf_x v(s,x)\geq \frac{\lambda_0}{2}.
		\end{align}
		2.\textit{Estimate $\theta$}\\
		\begin{align*}
			\theta_{t}-\frac{\kappa}{\mathbf{c}}\left(\frac{\theta_x}{b_{\varepsilon,\eta}}\right)_x=-\frac{p(v,\vartheta)}{\mathbf{c}} w_{x}+\frac{\mu}{\mathbf{c} v}\left(w_{x}\right)^{2}+\frac{\kappa}{\mathbf{c}}((\frac{1}{v}-\frac{1}{b_{\varepsilon,\eta}})\partial_x \vartheta)_x:=R(v,w,\vartheta)+\partial_x \tilde F(v,\vartheta).
		\end{align*}
		Applying Lemma \ref{lemma} with the equation above for $\theta$ to obtain
		\begin{align*}
			&\sum_{\star\in\{L^2_{T},X_T^{\frac{1}{2},2}\}}\|\theta\|_{\star}+	\sum_{\star\in\{L^\frac{6}{5}_{T},X_T^{\frac{5}{6},\frac{6}{5}}\}}\|\theta_x\|_{\star}\lesssim \|\bar \theta_0\|_{\dot W^{\frac{1}{3},\frac{6}{5}}}+\sum_{\star\in\{L_{T}^\frac{6}{5},X_T^{\frac{5}{6},\frac{6}{5}}\}}\|\tilde F\|_{\star}+T^\frac{1}{4}\sum_{\star\in\{L^1_{T},X_T^{1,1}\}}\|R\|_{\star}.
		\end{align*}
		It is easy to check that 
		\begin{align*}
			&\sum_{\star\in\{L_{T}^\frac{6}{5},X_T^{\frac{5}{6},\frac{6}{5}}\}}\|\tilde F\|_{\star}\lesssim \|v-b_{\varepsilon,\eta}\|_{L^\infty_{T}}(\|\partial_x\vartheta\|_{L_{T}^\frac{6}{5}}+\|\partial_x\vartheta\|_{X_T^{\frac{5}{6},\frac{6}{5}}})\lesssim \varepsilon_0B,\\
			& \sum_{\star\in\{L^1_{T},X_T^{1,1}\}}\|R\|_{\star}\lesssim \|w_x\|_{L^2_T}^2+\|\vartheta\|_{L^2_T}^2+\|w_x\|_{X_T^{\frac{1}{2},2}}^2+\|\vartheta\|_{X_T^{\frac{1}{2},2}}^2\lesssim B^2.
		\end{align*}
		Hence we obtain 
		\begin{align}
			&\|\theta\|_{L^2_{T}}+	\|\theta_x\|_{L^\frac{6}{5}_{T}}+\|\theta\|_{X_T^{\frac{1}{2},2}}+\|\theta_x\|_{X_T^{\frac{5}{6},\frac{6}{5}}}\nonumber\\
			&\quad\quad\quad\ \ \ \ \leq \tilde C_0\|\bar \theta_0\|_{\dot W^{\frac{1}{3},\frac{6}{5}}}+\tilde C_0\varepsilon_0B+\tilde C_0T^\frac{1}{4}B^2\leq \frac{B}{3},\label{retheta}
		\end{align}
		provided $ \|\bar \theta_0\|_{\dot W^{\frac{1}{3},\frac{6}{5}}}\leq \frac{B}{\tilde C_0}$, $\varepsilon_0\leq \frac{1}{10(1+\tilde C_0)^{10}}$ and $T\leq \frac{1}{(1+\tilde C_0+B)^{10}}$.\\
		3.\textit{ Estimate $u$}\\
		Recall the equation
		\begin{align*}
			u_t-\mu\left(\frac{u_x}{b_{\varepsilon}}\right)_x=-(p(v,\theta))_x+\mu((\frac{1}{v}-\frac{1}{b_{\varepsilon}}) w_x)_x=:\partial_xF(v,w,\theta).
		\end{align*}
		Applying Lemma \ref{lemma} with the equation above for $u$, we obtain that 
		\begin{align*}
			\|u_x\|_{L^2_{T}}+	\|u_x\|_{X_T^{\frac{1}{2},2}}+\|u_x\|_{X_T^{\frac{3}{4},\infty}}\lesssim \|u_0\|_{L^2}+\|F\|_{L^2_{T}}+\|F\|_{X_T^{\frac{1}{2},2}}+\|F\|_{X_T^{\frac{3}{4},\infty}}.
		\end{align*}
		It is easy to check that 
		\begin{align*}
			&\|F\|_{L^2_{T}}\lesssim \sum_{\star\in\{L^2_{T},X_T^{\frac{1}{2},2},X_T^{\frac{1}{2},2}\}}\|\theta\|_{\star}+\|v-b_{\varepsilon}\|_{L^\infty}\sum_{\star\in\{L^2_{T},X_T^{\frac{1}{2},2}\}}\|w_x\|_{\star},\\
			&\|F\|_{X_T^{\frac{3}{4},\infty}}\lesssim \|\theta\|_{X_T^{\frac{3}{4},\infty}}+\|v-b_{\varepsilon}\|_{L^\infty}\|w_x\|_{X_T^{\frac{3}{4},\infty}}.
		\end{align*}
		By the interpolation inequality, we have $\|\theta\|_{X_T^{\frac{3}{4},\infty}}\lesssim \|\theta\|_{X_T^{\frac{1}{2},2}}+\|\theta_x\|_{X_T^{\frac{5}{6},\frac{6}{5}}}$.
		Hence 
		\begin{align*}
			&\sum_{\star\in\{L^2_{T},X_T^{\frac{1}{2},2},X_T^{\frac{3}{4},\infty}\}}\|u_x\|_{\star}\\
			&\ \ \leq \tilde C_1 (\|u_0\|_{L^2}+\|\theta\|_{L^2_{T}}+\|\theta\|_{X_T^{\frac{1}{2},2}}+\|\theta_x\|_{X_T^{\frac{5}{6},\frac{6}{5}}}+\|v-b_{\varepsilon,\eta}\|_{L^\infty}(\|w_x\|_{L^2_{T}}+\|w_x\|_{X_T^{\frac{1}{2},2}}))\\
			&\ \  \overset{\eqref{es2v}, \eqref{retheta}}\leq \tilde C_1(\|u_0\|_{L^2}+\tilde C_0\|\bar \theta_0\|_{\dot W^{\frac{1}{3},\frac{6}{5}}}+\tilde C_0\varepsilon_0B+\tilde C_0T^\frac{1}{4}B^2+\varepsilon_0 B)\\
			&\ \ \ \ \ \leq \frac{B}{3},
		\end{align*}
		provided $B\geq (10+\tilde C_0+\tilde C_1+\tilde C_2)(\|u_0\|_{L^2}+\|\bar \theta_0\|_{\dot W^{\frac{1}{3},\frac{6}{5}}})$, $\varepsilon_0\leq \frac{1}{100(1+\tilde C_0+\tilde C_1+\tilde C_2)^{10}}$ and $T\leq {\varepsilon_0^{10}}B^{-10}$.
		Combining this with \eqref{retheta}, we obtain 
		\begin{align*}
			\|(u,\theta)\|_{Z_T}\leq B.
		\end{align*}
		This implies that $\mathcal{M}$ maps $\mathcal{X}_{T,B}$ to itself. It remains to prove that $\mathcal{M}$ is contraction. Consider $(w_1,\vartheta_1), (w_2,\vartheta_2)\in \mathcal{X}_{T,B}$, let $v_m(t,x)=v_0(x)+\int_0^t \partial_x w_m(s,x)ds, m=1,2$. Then we have 
		\begin{equation}\label{v1v2}
			\begin{aligned}
				&\|v_m-b_{\varepsilon,\eta}\|_{L^\infty_{T}}\leq 2\varepsilon_0,\ \ \ \ \ \inf_{s\in[0,T]}	\inf_x v_m(s,x)\geq \frac{\lambda_0}{2},\\
				&\|v_1-v_2\|_{L^\infty_{T}}\leq \int_0^T \|\partial_x(w_1-w_2)(s)\|_{L^\infty}ds\lesssim T^\frac{1}{4} \|\partial_x (w_1-w_2)\|_{X_T^{\frac{3}{4},\infty}}.
			\end{aligned}
		\end{equation}
		Denote $(u_m,\theta_m)=\mathcal{M}(w_m,\vartheta_m), m=1,2$. We have 
		\begin{align*}
			&	\partial_t (u_1-u_2)-\kappa\left(\frac{\partial_x (u_1-u_2)}{b_{\varepsilon,\eta}}\right)_x=\partial_x(F(v_1,w_1,\theta_1)- F(v_2,w_2,\theta_2)),\\
			&	\partial_{t}(\theta_1-\theta_2)-\frac{\kappa}{\mathbf{c}}\left(\frac{\partial_x(\theta_1-\theta_2)}{b_{\varepsilon,\eta}}\right)_x=R(v_1,w_1,\vartheta_1)-R(v_2,w_2,\vartheta_2)+\partial_x(\tilde F(v_1,\vartheta_1)-\tilde F(v_2,\vartheta_2)).
		\end{align*}
		Applying Lemma \ref{lemma}  with $(f,\phi,C_\phi)=(\theta_1-\theta_2,b_{\varepsilon,\eta}^{-1},\eta^{-1}\|v_0\|_{L^\infty})$ to get
		\begin{align}
			&\sum_{\star\in\{L^2_{T},X_T^{\frac{1}{2},2}\}}\|\theta_1-\theta_2\|_{\star}+	\sum_{\star\in\{L^\frac{6}{5}_{T},X_T^{\frac{5}{6},\frac{6}{5}}\}}\|\partial_x(\theta_1-\theta_2)\|_{\star}\nonumber\\
			&\lesssim \sum_{\star\in\{L_{T}^\frac{6}{5},X_T^{\frac{5}{6},\frac{6}{5}}\}}\|\tilde F(v_1,\vartheta_1)-\tilde F(v_2,\vartheta_2)\|_{\star}+T^\frac{1}{4}\sum_{\star\in\{L^1_{T},X_T^{1,1}\}}\|R(v_1,w_1,\vartheta_1)-R(v_2,w_2,\vartheta_2)\|_{\star} .\label{diffthet}
		\end{align}
		Then \eqref{v1v2} yields
		\begin{align}
			&\sum_{\star\in\{L_{T}^\frac{6}{5},X_T^{\frac{5}{6},\frac{6}{5}}\}}\|(\tilde F(v_1,\vartheta_1)-\tilde F(v_2,\vartheta_2))\|_{\star}\nonumber\\
			&\lesssim \|{v_1}-{v_2}\|_{L^\infty}\sum_{\star\in\{L_{T}^\frac{6}{5},X_T^{\frac{5}{6},\frac{6}{5}}\}}\left\|\partial_x \vartheta_1\right\|_{\star} +\|v_1-b_{\varepsilon,\eta}\|_{L^\infty}\sum_{\star\in\{L_T^\frac{6}{5},X_T^{\frac{5}{6},\frac{6}{5}}\}}\left\|\partial_x (\vartheta_1-\vartheta_2)\right\|_{\star}\nonumber\\
			&\lesssim (T^\frac{1}{5}B+\varepsilon_0)\|(w_1,\vartheta_1)-(w_2,\vartheta_2)\|_{Z_T}.\label{diffF}
		\end{align}
		Moreover, we have 
		\begin{align*}
			|R(v_1,w_1,\vartheta_1)-R(v_2,w_2,\vartheta_2)|&\lesssim |\vartheta_1-\vartheta_2||\partial_xw_1|+|\vartheta_2||\partial_x(w_1-w_2)|+|\vartheta_2||\partial_xw_2||v_1-v_2|\\
			&\quad\quad+|\partial_x(w_1-w_2)|(|\partial_xw_1|+|\partial_x w_2|)+||\partial_xw_2|^2|v_1-v_2|.
		\end{align*}
		By H\"{o}lder's inequality, it is easy to check that 
		\begin{align}
			&\|R(v_1,w_1,\vartheta_1)-R(v_2,w_2,\vartheta_2)\|_{L^1_{T}}\nonumber\\
			&\lesssim \|\vartheta_1-\vartheta_2\|_{L^{2}_{T}}\|\partial_x w_1\|_{L^{2}_{T}}+(\|\vartheta_2\|_{L^{2}_{T}}+\|\partial_x w_1\|_{L^{2}_{T}}+\|\partial_x w_2\|_{L^{2}_{T}})\|\partial_x (w_1-w_2)\|_{L^{2}_{T}}\nonumber\\
			&\quad\quad\quad+(\|\vartheta_2\|_{L^{2}_{T}}^2+ \|\partial_x w_2\|_{L^{2}_{T}}^2)\|v_1-v_2\|_{L^\infty_{T}}\nonumber\\
			&\lesssim (B+B^2T^\frac{1}{5})\|(w_1,\vartheta_1)-(w_2,\vartheta_2)\|_{Z_T}.\label{diffR}
		\end{align}
		Similarly, one gets
		\begin{align*}
			&	\sum_{\star\in\{L^1_{T},X_T^{1,1}\}}\|R(v_1,w_1,\vartheta_1)-R(v_2,w_2,\vartheta_2)\|_{\star}\lesssim (B+B^2T^\frac{1}{5})\|(w_1,\vartheta_1)-(w_2,\vartheta_2)\|_{Z_T}.
		\end{align*}
		Combining this with \eqref{diffthet}, \eqref{diffF} and \eqref{diffR} to obtain 
		\begin{align}
			&\sum_{\star\in\{L^2_{T},X_T^{\frac{1}{2},2}\}}\|\theta_1-\theta_2\|_{\star}+	\sum_{\star\in\{L^\frac{6}{5}_{T},X_T^{\frac{5}{6},\frac{6}{5}}\}}\|\partial_x(\theta_1-\theta_2)\|_{\star}\nonumber\\
			&\leq \tilde C_3 (T^\frac{1}{5}B+\varepsilon_0+T^\frac{1}{4}(B+B^2T^\frac{1}{5}))\|(w_1,\vartheta_1)-(w_2,\vartheta_2)\|_{Z_T}\nonumber\\
			&\leq 2\tilde C_3\varepsilon_0\|(w_1,\vartheta_1)-(w_2,\vartheta_2)\|_{Z_T}.\label{diffrethe}
		\end{align}
		Then we estimate $u_1-u_2$. By Lemma \ref{lemma}, we get 
		\begin{align*}
			&\sum_{\star\in\{L^2_{T},\  X_T^{\frac{1}{2},2},\  X_T^{\frac{3}{4},\infty}\}}\|\partial_x (u_1-u_2)\|_{\star}\lesssim \sum_{\star\in\{L^2_{T},\  X_T^{\frac{1}{2},2},\ X_T^{\frac{3}{4},\infty}\}}\|F(v_1,w_1,\theta_1)- F(v_2,w_2,\theta_2)\|_{\star}.
		\end{align*}
		Note that 
		\begin{align*}
			|F(v_1,w_1,\theta_1)- F(v_2,w_2,\theta_2)|\lesssim |\theta_1-\theta_2|+|\theta_2||v_1-v_2|+|v_1-b_{\varepsilon,\eta}||\partial_x(w_1-w_2)|+|v_1-v_2||\partial_x w_2|.
		\end{align*}
		Using the interpolation $\|f\|_{X_T^{\frac{3}{4},\infty}}\lesssim \|f\|_{X_T^{\frac{1}{2},2}}\|\partial_x f\|_{ X_T^{\frac{5}{6},\frac{6}{5}}}$, we get
		\begin{align*}
			&	\sum_{\star\in\{L^2_{T},\  X_T^{\frac{1}{2},2},\ X_T^{\frac{3}{4},\infty}\}}\|F(v_1,w_1,\theta_1)- F(v_2,w_2,\theta_2)\|_{\star}\\
			&\ \ \ \lesssim \sum_{\star\in\{L^2_{T},\  X_T^{\frac{1}{2},2}\}}\|\theta_1-\theta_2\|_{\star}+\|\partial_x(\theta_1-\theta_2)\|_{ X_T^{\frac{5}{6},\frac{6}{5}}}+\|v_1-b_{\varepsilon,\eta}\|_{L^\infty}\sum_{\star\in\{L^2_{T},\  X_T^{\frac{1}{2},2},\ X_T^{\frac{3}{4},\infty}\}}\|\partial_x(w_1-w_2)\|_{\star}\\
			&\ \ \ \ \ \ +\Big(\sum_{\star\in\{L^2_{T},\  X_T^{\frac{1}{2},2}\}}\|\theta_2\|_{\star}+\|\partial_x\theta_2\|_{ X_T^{\frac{5}{6},\frac{6}{5}}}+\sum_{\star\in\{L^2_{T},\  X_T^{\frac{1}{2},2},\ X_T^{\frac{3}{4},\infty}\}}\|\partial_x w_2\|_{\star}\Big)\|v_1-v_2\|_{L^\infty}\\
			&\ \ \ \lesssim(\tilde C_3\varepsilon_0+BT^\frac{1}{5})\|(w_1,\vartheta_1)-(w_2,\vartheta_2)\|_{Z_T}.
		\end{align*}
		Then we get
		\begin{align*}
			&\sum_{\star\in\{L^2_{T},\  X_T^{\frac{1}{2},2},\  X_T^{\frac{3}{4},\infty}\}}\|\partial_x (u_1-u_2)\|_{\star}\\
			&\quad\quad\quad\leq \tilde C_4 (\tilde C_3\varepsilon_0+BT^\frac{1}{5})\|(w_1,\vartheta_1)-(w_2,\vartheta_2)\|_{Z_T}\leq \frac{1}{5}\|(w_1,\vartheta_1)-(w_2,\vartheta_2)\|_{Z_T},
		\end{align*}
		by taking  $B= (10+\sum_{m=1}^4\tilde C_m)(\|u_0\|_{L^2}+\|\bar \theta_0\|_{\dot W^{\frac{1}{3},\frac{6}{5}}})$, $\varepsilon_0= {100(1+\sum_{m=1}^4\tilde C_m)^{-10}}$ and $T= {\varepsilon_0^{10}}B^{-10}$. Combining this with \eqref{diffrethe} yields
		\begin{align*}
			\|\mathcal{M}(w_1,\vartheta_1)-\mathcal{M}(w_1,\vartheta_2)\|_{Z_T}\leq \frac{1}{2}\|(w_1-w_2,\vartheta_1-\vartheta_2)\|_{Z_T}.
		\end{align*}
		This completes the proof.
	\end{proof}\vspace{0.5cm}\\
	In the rest part of this section, we give a local version of  Theorem \ref{thmfull}. 
	Define the local  norms
	\begin{align*}
		\|h\|_{\tilde L^p}:=\sup_{z}\|h\|_{L^p([z,z+1])},\ \ \ 
		\|h\|_{\tilde W ^{s,p} }:=\sup_{z}\|h\chi_z\|_{\dot W ^{s,p}},\ \ \ \|f\|_{\tilde X^{\sigma,p}_T}=\sup_z\|f\chi_z\|_{\tilde X^{\sigma,p}_T},
	\end{align*}
	for $h:\mathbb{R}\to \mathbb{R}$, $f:\mathbb{R}^+\times\mathbb{R}\to\mathbb{R}$.  Here $\chi_z$ is a smooth cutoff function satisfying $\mathbf{1}_{[z-1,z+1]}\leq \chi_z\leq \mathbf{1}_{[z-2,z+2]}$. 
	Moreover, we denote $\|f\|_{\tilde L^p_{T}}=\|f\|_{L^p_t\tilde L^p_x}$.
	We introduce the following lemma.
	\begin{lemma}\label{lemlocal}
		Let $$
		g(t,x)=\int_0^t \int_{|y|\geq \rho} \partial_t K(t-\tau,y)F(\tau,x-y)dyd\tau,
		$$
		for some $\rho\in(0,1)$.	Then for any $0<T<1$, $p\geq 1$, $\sigma\in(0,1-\alpha)$,
		\begin{align*}
			\|g\|_{\tilde L^p_{T}}\lesssim_\rho \|F\|_{\tilde L^p_{T}},\ \ \ \ \|g\|_{\tilde X^{\sigma,p}_T}\lesssim _\rho \|F\|_{\tilde X^{\sigma,p}_T}.
		\end{align*}
	\end{lemma}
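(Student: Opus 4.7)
My plan is to mimic the proof of Lemma \ref{mainlem}, substituting an amalgam-type Young inequality for the classical $L^1 \ast L^p \to L^p$ version. Let $H(s, y) := \partial_t \mathbf{K}(s, y) \mathbf{1}_{|y| \geq \rho}$. Since $|\partial_t \mathbf{K}(s, y)| \lesssim (s^{1/2} + |y|)^{-3} \leq |y|^{-3}$ for $|y| \geq \rho$ by Lemma \ref{lemheat}, one has
\[
\|H(s, \cdot)\|_{L^1_y} \lesssim \rho^{-2}, \qquad \|H(s+a, \cdot) - H(s, \cdot)\|_{L^1_y} \lesssim s^{-1} \min(1, a/s),
\]
the latter inherited from the unrestricted heat kernel since truncation only shrinks the $L^1$ norm. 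I would then establish the amalgam Young inequality $\|G \ast_x F\|_{\tilde L^p_x} \lesssim \|G\|_{L^1_x} \|F\|_{\tilde L^p_x}$ for any $G \in L^1(\mathbb R)$, by decomposing $G = \sum_k G \mathbf{1}_{[k, k+1]}$ and applying classical Young piecewise while tracking supports, namely $\|(G \mathbf{1}_{[k, k+1]}) \ast F\|_{L^p([z-2, z+2])} \lesssim \|G \mathbf{1}_{[k, k+1]}\|_{L^1} \|F\|_{\tilde L^p_x}$, and then summing in $k$ to collapse the kernel norm to $\|G\|_{L^1}$.

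With this tool, the $\tilde L^p_T$ bound is immediate: applying the amalgam Young in space pointwise in time and then Young's inequality in time yields
\[
\|g\|_{\tilde L^p_T} \lesssim \Big(\int_0^T \|H(s)\|_{L^1_y}\, ds\Big) \|F\|_{\tilde L^p_T} \lesssim_\rho \|F\|_{\tilde L^p_T},
\]
since $\int_0^T \|H(s)\|_{L^1_y}\, ds \lesssim \min(\rho^{-2} T,\ 1 + \log(T/\rho^2))_+ \lesssim_\rho 1$ for $T < 1$.

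For the $\tilde X^{\sigma, p}_T$ estimate I would mirror the structure of Lemma \ref{mainlem}'s proof. Write $g = g_1 + g_2$ as there; for $g_2$, the identity $\int_0^t \partial_t \mathbf{K}(t-\tau, y)\, d\tau = \mathbf{K}(t, y) - \mathbf{Dirac}(y)$ collapses on $|y| \geq \rho$ to $g_2(t, x) = \int_{|y| \geq \rho} \mathbf{K}(t, y) F(t, x-y)\, dy$ (the Dirac term drops since $\rho > 0$), whence the amalgam Young gives $t^\sigma \|g_2(t)\|_{\tilde L^p_x} \lesssim \|F\|_{\tilde X^{\sigma, p}_T}$. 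The contribution $g_1$, together with the time-increment decomposition $g(t) - g(s) = I_1 + I_2 + I_3$, is treated exactly as in Lemma \ref{mainlem}: each spatial convolution is controlled by the amalgam Young, and the time integrals go through verbatim because $\|H(s)\|_{L^1_y}$ and $\|\delta_a H(s)\|_{L^1_y}$ obey the same bounds, up to $\rho$-dependent prefactors, as the unrestricted heat kernel, so that the usual factors $a^\alpha s^{-\sigma-\alpha}$ emerge unchanged.

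The hardest part will be essentially bookkeeping: checking that the amalgam Young replaces ordinary Young at every step of Lemma \ref{mainlem} without degrading the scaling, and that the mild logarithmic or $\rho^{-2}$ blow-up in the $L^1_y$ norm of the kernel is harmless because only $\rho$-dependent constants are required in the conclusion.
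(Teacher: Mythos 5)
Your proposal is correct, but it is organized differently from the paper's argument. The paper's proof is cruder and shorter: it bounds $g$ pointwise in $(t,x)$, splitting the region $|y|\geq\rho$ into unit annuli $|y|\in[\rho+n,\rho+n+1]$, using $|\partial_t\mathbf{K}(t-\tau,y)|\lesssim(\rho+n)^{-3}$ there and H\"older in space-time on each annulus (possible since $T<1$ and the annuli have unit size) to bring in $\min\{\|F\|_{\tilde L^p_T},\|F\|_{\tilde X_T^{\sigma,p}}\}$, then sums in $n$; the local $L^p$ and the sup-in-time parts of the $X$-norm follow from this uniform bound, and the H\"older-in-time seminorm is dispatched with a "similarly, one can check" using the kernel's time-increment bounds. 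You instead keep the convolution structure: you prove the amalgam Young inequality $\|G\ast F\|_{\tilde L^p}\lesssim\|G\|_{L^1}\|F\|_{\tilde L^p}$ (your piecewise decomposition of $G$ into unit blocks is the standard and correct way to do this), record that the truncated kernel $H(s,\cdot)=\partial_t\mathbf{K}(s,\cdot)\mathbf{1}_{|\cdot|\geq\rho}$ satisfies $\|H(s)\|_{L^1}\lesssim\min(\rho^{-2},s^{-1})$ and inherits the increment bound $s^{-1}\min(1,a/s)$, and then rerun the proof of Lemma \ref{mainlem} with ordinary Young replaced by amalgam Young. This buys a more systematic proof, and in particular it makes explicit the H\"older-in-time estimate that the paper only sketches; the paper's route buys brevity. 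One cosmetic remark: your claim that the $I_3$ step goes through "verbatim" is a slight overstatement, since for the truncated kernel the Dirac contributions disappear and one gets instead
\begin{align*}
I_3=f(s)\ast\big[(\mathbf{K}(t)-\mathbf{K}(s))\mathbf{1}_{|y|\geq\rho}\big]+(f(t)-f(s))\ast\big[\mathbf{K}(t-s)\mathbf{1}_{|y|\geq\rho}\big],
\end{align*}
but this is bounded even more directly by $\|(\mathbf{K}(t)-\mathbf{K}(s))\|_{L^1}\lesssim\big(\tfrac{t-s}{s}\big)^\alpha$ and $\|\mathbf{K}(t-s)\|_{L^1}\leq1$, so the required estimate $(t-s)^\alpha s^{-\sigma-\alpha}$ still emerges; no gap results.
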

	\begin{proof}
		By Lemma \ref{lemheat} and H\"{o}lder's inequality, 
		\begin{align*}
			|g(t,x)|\lesssim &\int_0^t \int_{|y|\geq \rho} \frac{1}{((t-\tau)^\frac{1}{2}+|y|)^3}|F(\tau,x-y)|dyd\tau
			\\
			\lesssim &\sum_{n=0}^\infty\int_0^t \int_{|y|\in[\rho+n,\rho+n+1]}\frac{1}{((t-\tau)^\frac{1}{2}+|y|)^3}|F(\tau,x-y)|dyd\tau\\
			\lesssim& \sum_{n=0}^\infty \frac{1}{(\rho+n)^3}\min\{\|F\|_{\tilde L^p_{T}},\|F\|_{\tilde X_T^{\sigma,p}}\}\lesssim \rho^{-3}\min\{\|F\|_{\tilde L^p_{T}},\|F\|_{\tilde X_T^{\sigma,p}}\},
		\end{align*}
		for any $ x\in\mathbb{R}, \ t\in(0,T)$.
		
		Hence for any $z\in\mathbb{R}$, 
		\begin{align*}
			\|g\|_{L^p_{T}([z,z+1])}+\|g\|_{X_T^{\sigma,p}([z,z+1])}\lesssim \|g\|_{L^\infty_{T}([z,z+1])}\lesssim \min\{\|F\|_{\tilde L^p_{T}},\|F\|_{\tilde X_T^{\sigma,p}}\}.
		\end{align*}Similarly, one can check that $\sup_{0<s<t<T}s^{\sigma+\alpha}\frac{\|g(t)-g(s)\|_{\tilde L^p}}{(t-s)^\alpha}\lesssim \|F\|_{\tilde X_T^{\sigma,p}}$.
		This completes the proof.
	\end{proof}
	
	The following is a local version of Lemma \ref{lemma}.
	\begin{lemma}
		Let $f$ be a solution to \eqref{eqpara} with initial data $f_0=\partial_x \bar f_0$. There exists $T>0$ such that 
		$$
		\sum_{\star\in\{\tilde L^2_{T},\tilde X_T^{\frac{1}{2},2}\}}\|f\|_{\star}+	\sum_{\star\in\{\tilde L^\frac{6}{5}_{T},\tilde X_T^{\frac{5}{6},\frac{6}{5}}\}}\|\partial_x f\|_{\star}\lesssim \|\bar f_0\|_{\tilde  W^{\frac{1}{3},\frac{6}{5}}}+ \sum_{\star\in\{\tilde L^\frac{6}{5}_{T},\tilde X_T^{\frac{5}{6},\frac{6}{5}}\}}\| F\|_{\star}+T^{\frac{1}{4}}\sum_{\star\in\{{\tilde L^1_{T}},{\tilde X_T^{1,1}}\}}\|R\|_{\star}.
		$$
		Moreover, if $R=0$, we have 
		$$\sum_{\star\in\{\tilde L^2_{T},\tilde X_T^{\frac{1}{2},2},\tilde X_T^{\frac{3}{4},\infty}\}}\|\partial_xf\|_{\star}\lesssim \|f_0\|_{\tilde L^2}+\sum_{\star\in\{\tilde L^2_{T},\tilde X_T^{\frac{1}{2},2},\tilde X_T^{\frac{3}{4},\infty}\}}\|F\|_{\star}.
		$$
	\end{lemma}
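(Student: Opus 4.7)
The plan is to reduce to the global Lemma~\ref{lemma} by smooth spatial localization. For each $z\in\mathbb{R}$, multiplying \eqref{eqpara} by $\chi_z$ shows that $g_z:=f\chi_z$ satisfies
\begin{equation*}
\partial_t g_z-\partial_x(\phi\,\partial_x g_z)=\partial_x\tilde F_z+\tilde R_z,\qquad g_z(0,\cdot)=\partial_x(\bar f_0\chi_z)-\bar f_0\chi_z',
\end{equation*}
with $\tilde F_z=F\chi_z-\phi f\chi_z'$ and $\tilde R_z=R\chi_z-F\chi_z'-\phi(\partial_x f)\chi_z'$. All correction terms containing $\chi_z'$ are supported on the shell $\{1\le|x-z|\le 2\}$, of length $O(1)$.

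I would then split $g_z=g_z^{(1)}+g_z^{(2)}$, where $g_z^{(1)}$ inherits the primitive-form datum $\partial_x(\bar f_0\chi_z)$ and the full source, and $g_z^{(2)}$ is the homogeneous evolution of $-\bar f_0\chi_z'$. Applying Lemma~\ref{lemma} to $g_z^{(1)}$ with primitive $\bar g_{1,0}=\bar f_0\chi_z$ (whose $\dot W^{1/3,6/5}$ norm is $\lesssim\|\bar f_0\|_{\tilde W^{1/3,6/5}}$), each correction contribution in $\tilde F_z$ and $\tilde R_z$ is controlled, by the compact support of $\chi_z'$ together with H\"{o}lder in time, by a positive power of $T$ times a tilded norm appearing on the left-hand side of the claim; for instance $\|\phi f\chi_z'\|_{L^{6/5}_T}\lesssim T^{1/3}\|f\|_{\tilde L^2_T}$ and $T^{1/4}\|\phi(\partial_xf)\chi_z'\|_{L^1_T}\lesssim T^{5/12}\|\partial_x f\|_{\tilde L^{6/5}_T}$. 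For $g_z^{(2)}$, the datum $-\bar f_0\chi_z'$ has compact support and lies in $L^2$ via the Sobolev embedding $\dot W^{1/3,6/5}(\mathbb{R})\hookrightarrow L^2(\mathbb{R})$; the heat-kernel bounds of Lemma~\ref{lemheat}, supplemented by Lemma~\ref{lemlocal} for the far-field contribution at points $|x-z|\gg 1$, control each required norm by $\|\bar f_0\|_{\tilde W^{1/3,6/5}}$.

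Taking the supremum in $z$ converts the $g_z$-bounds into the target tilded norms, and for $T$ small the $T^{\delta}$ gains allow the correction terms to be absorbed, yielding the first stated inequality. For the $R=0$ case the same scheme applies using the second bound of Lemma~\ref{lemma} with initial data $f_0\chi_z\in L^2$, but there is a genuine difficulty: even when $R=0$, the localized $\tilde R_z$ retains $-\phi(\partial_xf)\chi_z'$, whereas the second bound of Lemma~\ref{lemma} is only available when $R=0$. This is the main obstacle. I would handle it by representing the $\tilde R_z$-piece of $g_z$ as an explicit Duhamel integral against the localized parabolic kernel; since $\tilde R_z$ is supported in the annulus $\{1\le|x-z|\le 2\}$, this convolution splits into a long-range piece (evaluated outside $[z-2,z+2]$ and bounded by Lemma~\ref{lemlocal}) and a near-diagonal piece bounded via the pointwise $\partial_x^2\mathbf{K}$-estimates of Lemma~\ref{lemheat}, both of which can be accommodated in the $X^{3/4,\infty}$-scale.
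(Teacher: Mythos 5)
Your localization is structurally different from the paper's, and the difference is exactly where the gap sits. You multiply the \emph{solution} by $\chi_z$, which produces the commutator terms $-\phi f\chi_z'$ (inside $\tilde F_z$) and $-F\chi_z'-\phi(\partial_x f)\chi_z'$ (inside $\tilde R_z$). For the first inequality this is workable, since Lemma \ref{lemma} tolerates a nonzero $R$ in the $L^2_T/X_T^{1/2,2}$--$L^{6/5}_T/X_T^{5/6,6/5}$ scale and your $T^{1/3}$, $T^{5/12}$ gains permit absorption after taking the supremum in $z$. But for the second inequality ($R=0$) you correctly observe that $\tilde R_z\neq 0$, and the fix you propose does not go through as stated: there is no ``explicit Duhamel integral against the localized parabolic kernel,'' because the operator $\partial_t-\partial_x(\phi\,\partial_x\cdot)$ with piecewise-continuous $\phi$ has no explicit kernel. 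The paper only has approximate representations — formula \eqref{soforjum} near the jumps and \eqref{sofor1} in the interior — whose error terms again involve $\partial_x f$ on the whole line, so invoking Lemma \ref{lemheat} and Lemma \ref{lemlocal} directly against $\tilde R_z$ is not legitimate. Moreover, the annulus $\{1\le|x-z|\le2\}$ carrying $\tilde R_z$ touches the interval $[z-1,z+1]$ on which you need the estimate, so the ``near-diagonal piece'' is genuinely singular; handling a nonzero source term in the $\partial_x f$-norms $X_T^{1/2,2}$, $X_T^{3/4,\infty}$ is precisely what Lemma \ref{lemma} does not provide (it assumes $R=0$ there), and your proposal would in effect require reproving Lemmas \ref{lemXTjum}--\ref{lemLPint} with an extra inhomogeneity at that regularity level.

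The paper avoids the issue by localizing the \emph{data} rather than the solution: it writes $f=f_1+f_2$, where $f_1$ solves the same equation \eqref{eqpara} with $(\bar f_0\chi,\,F\chi,\,R\chi)$ and $f_2$ with $(\bar f_0(1-\chi),\,F(1-\chi),\,R(1-\chi))$, the cutoff being adapted to a compact window $\mathbb{K}$ (one large interval containing all jump points, or a unit interval away from them). No commutator terms appear; Lemma \ref{lemma} applies verbatim to $f_1$ (and $\|g\chi\|_{L^p}\lesssim\|g\|_{\tilde L^p}$), while for $f_2$ the data are supported at distance $\ge 1/2$ from $\mathbb{K}$, so the kernels in the representation formulas are nonsingular there and Lemma \ref{lemlocal} gives the local bounds — in particular the $R=0$ case needs no new estimate. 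I would recommend you switch to this data-splitting decomposition; with it, your absorption bookkeeping for the near part is unnecessary and the $R=0$ obstruction disappears.
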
 
	\begin{proof} Denote $\mathbb{I}_{\varepsilon}=\cup_{n=1}^\infty[a_n-\varepsilon,a_n+\varepsilon]$. 
		Fix $r>100$ such that $\mathbb{I}_{\varepsilon}\subset B_{r/10}=[-r/10,r/10]$. 
		We consider a compact interval $\mathbb{K}$, which is either $B_{r/2}$ or any $[z-\frac{1}{2},z+\frac{1}{2}]\subset \mathbb{R}\backslash B_{r/2}$.
		We divide the solution into two parts  $f=f_{1}+f_{2}$, such that 
		\begin{align*}
			\begin{cases}
				\partial_x f_1-\partial_x(\phi(x)\partial_x f_1)=\partial_x (F\chi)+R\chi,\\
				f_1(0,x)=\partial_x (\bar f_0(x)\chi(x)),
			\end{cases}	\ \ \ \ 
			\begin{cases}
				\partial_x f_2-\partial_x(\phi(x)\partial_x f_2)=\partial_x (F(1-\chi))+R(1-\chi),\\
				f_2(0,x)=\partial_x (\bar f_0(x)(1-\chi)(x)).
			\end{cases}
		\end{align*}
		Here $\chi$ is a smooth cutoff function satisfying $$\begin{cases}
			&\mathbf{1}_{B_r}\leq \chi\leq \mathbf{1}_{B_{2r}},\ \ \text{if}\ \mathbb{K}=B_{r/2},\\
			&\mathbf{1}_{[z-1,z+1]}\leq \chi\leq \mathbf{1}_{[z-2,z+2]},\ \ \text{if}\ \mathbb{K}=[z-\frac{1}{2},z+\frac{1}{2}]\subset \mathbb{R}\backslash B_{r/2}.
		\end{cases}$$ We first consider  the main term $f_1$. Applying  Lemma \ref{lemma} to  obtain that 
		\begin{align*}
			\sum_{\star\in\{ L^2_{T},X_T^{\frac{1}{2},2}\}}\|f_1\|_{\star}+	\sum_{\star\in\{ L^\frac{6}{5}_{T},X_T^{\frac{5}{6},\frac{6}{5}}\}}\|\partial_x f_1\|_{\star}\lesssim \|\bar  f_0\chi\|_{\dot  W^{\frac{1}{3},\frac{6}{5}}}+ \sum_{\star\in\{ L^\frac{6}{5}_{T},X_T^{\frac{5}{6},\frac{6}{5}}\}}\| F\chi\|_{\star}+T^{\frac{1}{4}}\sum_{\star\in\{ L^1_{T},X_T^{1,1}\}}\|R\chi\|_{\star}.
		\end{align*}
		Note that $\chi$ is compactly supported. Hence $\|g\chi\|_{L^p}\lesssim \|g\|_{\tilde L^p}$. Then we obtain 
		\begin{align}\label{f1local}	&		\
			\sum_{\star\in\{ L^2_{T},X_T^{\frac{1}{2},2}\}}\|f_1\|_{\star}+	\sum_{\star\in\{ L^\frac{6}{5}_{T},X_T^{\frac{5}{6},\frac{6}{5}}\}}\|\partial_x f_1\|_{\star}\lesssim \|\bar  f_0\|_{\tilde  W^{\frac{1}{3},\frac{6}{5}}}+ \sum_{\star\in\{ L^\frac{6}{5}_{T},\tilde X_T^{\frac{5}{6},\frac{6}{5}}\}}\| F\|_{\star}+T^{\frac{1}{4}}\sum_{\star\in\{\tilde  L^1_{T},\tilde X_T^{1,1}\}}\|R\|_{\star}.
		\end{align}
		Similarly, when $R=0$ we have 
		\begin{align*}
			\sum_{\star\in\{\tilde L^2_{T},\tilde X_T^{\frac{1}{2},2},\tilde X_T^{\frac{3}{4},\infty}\}}\|\partial_xf_1\|_{\star}\lesssim \|f_0\|_{\tilde L^2}+\sum_{\star\in\{\tilde L^2_{T},\tilde X_T^{\frac{1}{2},2},\tilde X_T^{\frac{3}{4},\infty}\}}\|F\|_{\star}.
		\end{align*}
		Then we estimate the perturbation part $f_2$. Note that for any $y\in\operatorname{supp}(1-\chi)$ and any  $x\in \mathbb{K}$, there holds $|x-y|\geq \frac{1}{2}$, which removes the singularity of kernel. The good decay property of the kernel helps us to control local norms (see Lemma \ref{lemlocal}). We consider $\partial_x f_{2,N}=\int_0^t \int \partial_1^2\mathbf{H} (t-\tau,x-y,x)F(\tau,y)(1-\chi(y))dyd\tau$ for an example. Other terms can be done similarly.
		Applying Lemma \ref{lemlocal} we obtain that 
		\begin{align*}
			\sum_{\star\in\{\tilde L^p_{T}(\mathbb{K}),\tilde X_T^{\sigma,p}\}}\|\partial_x f_{2,N}\|_{\star}
			\lesssim \sum_{\star\in\{\tilde L^p_{T},\tilde X_T^{\sigma,p}\}}\|F\|_{\star},\ \ \ p\geq 1.
		\end{align*}
		Similarly, we obtain that 
		\begin{align*}
			\sum_{\star\in\{\tilde{L}^2_{T},\tilde{X}_T^{\frac{1}{2},2}\}}\|f_2\|_{ \star}+\sum_{\star\in\{ \tilde{L}^\frac{6}{5}_{T},\tilde{X}_T^{\frac{5}{6},\frac{6}{5}}\}}	\|\partial_x f_2\|_{\star}\lesssim \|\bar  f_0\|_{\tilde  W^{\frac{1}{3},\frac{6}{5}}}+\sum_{\star\in\{\tilde  L^\frac{6}{5}_{T},\tilde X_T^{\frac{5}{6},\frac{6}{5}}\}} \| F\|_{\star}+T^{\frac{1}{4}}\sum_{\star\in\{\tilde  L^1_{T},\tilde X_T^{1,1}\}}\|R\|_{\star},
		\end{align*}
		and 
		\begin{align*}
			\sum_{\star\in\{\tilde L^2_{T},\tilde X_T^{\frac{1}{2},2},\tilde X_T^{\frac{3}{4},\infty}\}}\|\partial_xf_2\|_{\star}\lesssim \|f_0\|_{\tilde L^2}+\sum_{\star\in\{\tilde L^2_{T},\tilde X_T^{\frac{1}{2},2},\tilde X_T^{\frac{3}{4},\infty}\}}\|F\|_{\star},\ \ \ \text{if}\ R=0.
		\end{align*}
		Combining this with \eqref{f1local}, we obtain the result.
	\end{proof}\vspace{0.5cm}\\
	Denote 
	\begin{align*}	\|(w,\vartheta)\|_{\tilde Z_T}:=\sum_{\star\in\{\tilde{L}_T^2,\tilde{X}_T^{\frac{1}{2},2},\tilde{X}_T^{\frac{3}{4},\infty}\}}\|\partial_x w\|_{\star}+\sum_{\star\in\{\tilde{L}_T^2,\tilde{X}_T^{\frac{1}{2},2}\}}\|\vartheta\|_{\star}+\sum_{\star\in\{\tilde{L}_T^{\frac{6}{5}},\tilde{X}_T^{\frac{5}{6},\frac{6}{5}}\}}\|\partial_x \vartheta\|_{\star}.
	\end{align*}
	Following the proof of Theorem \ref{thmfull}, we obtain the following result.
	\begin{thm}\label{thmloc}There exists $\varepsilon_0>0$ such that if initial data $(v_0,u_0,\theta_0)$ satisfies \eqref{loccon} and \eqref{conccc}, then the system \eqref{cpns} admits a unique local solution $(v,u,\theta)$ in $[0,T]$ satisfying 
		\begin{align*}
			\inf_{t\in[0,T]}\inf_xv(t,x)\geq \frac{\lambda_0}{2},\ \ \ \|v\|_{Y_T}\leq 2\|v_0\|_{L^\infty},\ \ \ \ \|(u,\theta)\|_{\tilde Z_T}\leq B.	
		\end{align*}
		for some $T,B>0$.
	\end{thm}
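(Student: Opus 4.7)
The plan is to run the fixed-point scheme of Theorem \ref{thmfull} (carried out in Lemma \ref{lemre2}) verbatim, with the space $Z_T$ replaced by its localized version $\tilde Z_T$ and with the global Lemma \ref{lemma} replaced by the local lemma established just above. Define
\[
\tilde{\mathcal{X}}_{T,B}=\{(w,\vartheta):(w,\vartheta)|_{t=0}=(u_0,\theta_0),\ \|(w,\vartheta)\|_{\tilde Z_T}\le B\},
\]
and on $\tilde{\mathcal{X}}_{T,B}$ introduce the map $\mathcal{M}(w,\vartheta)=(u,\theta)$ defined by exactly the same two linear parabolic equations as in the proof of Lemma \ref{lemre2}, with $v(t,x)=v_0(x)+\int_0^t \partial_x w(s,x)\,ds$.

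First, the pointwise control of $v$ is identical: since $\|\partial_xw(t)\|_{L^\infty}\le t^{-3/4}\|\partial_xw\|_{\tilde X_T^{3/4,\infty}}$ and $L^\infty$ is itself a local norm, the same argument as in \eqref{es2v} yields $\|v-b_{\varepsilon,\eta}\|_{Y_T}\le 2\varepsilon_0$ and $\inf_{t,x}v(t,x)\ge \lambda_0/2$ for $T$ small. Second, we apply the local version of Lemma \ref{lemma} to the equations for $\theta$ and $u$ to bound the $\tilde Z_T$-norm of $(u,\theta)$ by the initial data in $\|u_0\|_{\tilde L^2}+\|\bar\theta_0\|_{\tilde W^{1/3,6/5}}$ plus the source terms measured in appropriate local norms.

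Third, the source-term estimates transfer line-by-line from the proof of Lemma \ref{lemre2} once we observe the localized H\"older inequalities
\[
\|\vartheta\,w_x\|_{\tilde L^1_T}\le \|\vartheta\|_{\tilde L^2_T}\|w_x\|_{\tilde L^2_T},\qquad \|(w_x)^2\|_{\tilde L^1_T}\le \|w_x\|_{\tilde L^2_T}^2,
\]
\[
\|(v-b_{\varepsilon,\eta})\partial_x\vartheta\|_{\tilde L^{6/5}_T}\le \|v-b_{\varepsilon,\eta}\|_{L^\infty_T}\|\partial_x\vartheta\|_{\tilde L^{6/5}_T},
\]
which are immediate from $\|fg\|_{L^p([z,z+1])}\le \|f\|_{L^{p_1}([z,z+2])}\|g\|_{L^{p_2}([z,z+2])}$ applied uniformly in $z$, together with the local interpolation $\|\vartheta\|_{\tilde X_T^{3/4,\infty}}\lesssim \|\vartheta\|_{\tilde X_T^{1/2,2}}+\|\partial_x\vartheta\|_{\tilde X_T^{5/6,6/5}}$. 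With these, one reproduces in the $\tilde Z_T$-setting the self-mapping bound $\|\mathcal{M}(w,\vartheta)\|_{\tilde Z_T}\le B$ and the contraction estimate $\|\mathcal{M}(w_1,\vartheta_1)-\mathcal{M}(w_2,\vartheta_2)\|_{\tilde Z_T}\le \tfrac12\|(w_1-w_2,\vartheta_1-\vartheta_2)\|_{\tilde Z_T}$, by the same choice of parameters $B\sim \|u_0\|_{\tilde L^2}+\|\bar\theta_0\|_{\tilde W^{1/3,6/5}}$, $\varepsilon_0$ small, and $T\ll \varepsilon_0^{10}B^{-10}$.

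The expected main obstacle is checking that each multiplicative and interpolation step in the proof of Lemma \ref{lemre2} admits a localized analog uniform in the cutoff center $z$; the nontrivial point is the interpolation $\|\cdot\|_{\tilde X_T^{3/4,\infty}}\lesssim \|\cdot\|_{\tilde X_T^{1/2,2}}+\|\partial_x\cdot\|_{\tilde X_T^{5/6,6/5}}$, which however reduces to the standard one on each window $[z-2,z+2]$ after applying $\chi_z$ and absorbing commutator terms exactly as in the proof of the local lemma above. Once these pointwise-in-$z$ estimates are in hand, the Banach fixed-point argument concludes the proof.
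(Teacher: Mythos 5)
Your proposal is correct and follows essentially the same route as the paper, whose proof of Theorem \ref{thmloc} consists precisely of repeating the fixed-point argument of Theorem \ref{thmfull} (Lemma \ref{lemre2}) with $Z_T$ replaced by $\tilde Z_T$ and the global Lemma \ref{lemma} replaced by its local version established just before the statement. Your sketch is in fact more explicit than the paper's one-line proof about the points that need checking (localized H\"older and interpolation inequalities uniform in the window center $z$), and these checks are straightforward as you indicate.
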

	
	\section{Appendix}
	Consider the heat equation
	\begin{align}\label{eqparacons}
		\partial_t f-\partial_x^2 f=\partial_x F+R,\ \ \ \ \ \ \ f(0,x)=\partial_x \bar f_0(x)+\tilde f_0(x).
	\end{align}
	We introduce the following lemma.
	\begin{lemma}\label{lemap}
		Let $f$ be a solution to \eqref{eqparacons}, for any $T\in(0,1)$, there holds 
		\begin{align*}
			&	\|f\|_{L^2_{T}}+\|\partial_x f\|_{L^\frac{6}{5}_{T}}\lesssim \|\bar f_0\|_{\dot W^{\frac{1}{3},\frac{6}{5}}}+T^\frac{1}{3}\|\tilde f_0\|_{L^\frac{6}{5}}+\|F\|_{L^\frac{6}{5}_{T}}+T^\frac{1}{4}\|R\|_{L^1_{T}},\\
			&	\|f\|_{L^6_{T}}+\|\partial_x f\|_{L^2_{T}}\lesssim\|\partial_x \bar f_0\|_{L^2}+\|\tilde f_0\|_{L^2}+\|F\|_{L^2_{T}}+T^\frac{1}{2}\|R\|_{L^2_{T}}.
		\end{align*}
	\end{lemma}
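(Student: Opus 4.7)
By Duhamel's formula the solution decomposes as $f=f^{(1)}+f^{(2)}+f^{(3)}+f^{(4)}$, with
$$
f^{(1)}=\partial_x\mathbf{K}(t)*\bar f_0,\quad f^{(2)}=\mathbf{K}(t)*\tilde f_0,\quad f^{(3)}=\int_0^t\partial_x\mathbf{K}(t-\tau)*F(\tau)\,d\tau,\quad f^{(4)}=\int_0^t\mathbf{K}(t-\tau)*R(\tau)\,d\tau.
$$
I would bound each piece in the target mixed norms separately and sum. The central auxiliary estimate, which is also the ``aplinearLp'' inequality invoked in the body of the paper, is
$$
\|\partial_x^l(\mathbf{K}*h)\|_{L^p_T(\mathbb{R})}^p\lesssim\int_{\mathbb{R}}\frac{\|h(\cdot)-h(\cdot-z)\|_{L^p_x}^p}{|z|^{(1+l)p-1}}\,dz,\qquad l\in\{0,1\},\ p\in(1,\infty).
$$
For $l\ge 1$ I would exploit the cancellation $\int\partial_x^l\mathbf{K}(t,z)\,dz=0$ to rewrite $\partial_x^l\mathbf{K}(t)*h(x)=\int\partial_x^l\mathbf{K}(t,z)(h(x-z)-h(x))\,dz$, then apply Minkowski in $x$, Fubini in $(t,z)$, and the time-integrated heat-kernel bound $\int_0^\infty|\partial_x^l\mathbf{K}(t,z)|^p\,dt\lesssim|z|^{1-(1+l)p}$ from Lemma~\ref{lemheat}; the $l=0$ version is obtained by a parallel $TT^*$ argument on the heat semigroup.

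\textbf{Piecewise bounds.} For $f^{(1)}$, specializing the auxiliary inequality to $(l,p)=(1,\frac{6}{5})$ yields $\|\partial_x f^{(1)}\|_{L^{6/5}_T}\lesssim\|\bar f_0\|_{\dot W^{1/3,6/5}}$; the Plancherel identity gives $\|f^{(1)}\|_{L^2_T}\lesssim\|\bar f_0\|_{L^2}$, which is absorbed into the same quantity via the one-dimensional Sobolev embedding $\dot W^{1/3,6/5}(\mathbb R)\hookrightarrow L^2(\mathbb R)$ (exponents match: $\frac12=\frac56-\frac13$). For $f^{(2)}$, Young's inequality with $\|\mathbf{K}(t)\|_{L^{3/2}_x}\lesssim t^{-1/6}$ produces $\|\mathbf{K}(t)*\tilde f_0\|_{L^2_x}\lesssim t^{-1/6}\|\tilde f_0\|_{L^{6/5}}$, and its $L^2_t$-integration on $[0,T]$ yields the stated $T^{1/3}$ factor; the $L^{6/5}_T$ bound on $\partial_x f^{(2)}$ is analogous, using $\|\partial_x\mathbf{K}(t)\|_{L^1_x}\lesssim t^{-1/2}$. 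For $f^{(3)}$, Minkowski in $\tau$ reduces the task to parabolic $L^p_T$ maximal regularity (Calderon--Zygmund for $p=\frac65$, Plancherel for $p=2$), which gives the bound with the $F$-norm on the right. For $f^{(4)}$, plain Young with $\|\mathbf{K}(t)\|_{L^2_x}\lesssim t^{-1/4}$ plus a time integration gives the claimed $T^{1/4}\|R\|_{L^1_T}$. The second estimate of the lemma is proved by the same template: the $\bar f_0$-contribution via Plancherel alone (since now the datum is $\partial_x\bar f_0\in L^2$), the $L^6_T$ norms via the classical Strichartz-type heat bound $\|\mathbf{K}*g\|_{L^6_T}\lesssim\|g\|_{L^2}$, the $L^2$ maximal regularity for the $F$-forcing, and the $R$-contribution gaining $T^{1/2}$ from $\|\mathbf{K}(t)\|_{L^1_x}=1$ together with one time integration.

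\textbf{Main obstacle.} The substantive technical inputs are the ``aplinearLp'' inequality above, the Strichartz-type estimate $\|\mathbf{K}*g\|_{L^6_T}\lesssim\|g\|_{L^2}$, and the parabolic $L^{6/5}$-maximal regularity for the $F$-forcing. All three are classical heat-equation facts but require going beyond pointwise kernel bounds (via $TT^*$ combined with Hardy--Littlewood--Sobolev for the first two, and via Calderon--Zygmund singular-integral theory for the third). Once they are in hand, assembling the four pieces is a routine H\"older-and-Young bookkeeping exercise, and the time factors $T^{1/3}$, $T^{1/4}$, $T^{1/2}$ drop out of elementary integrations using the scaling $\|\mathbf{K}(t)\|_{L^p_x}\lesssim t^{-(1-1/p)/2}$.
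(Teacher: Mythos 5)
Your decomposition and the per-piece targets coincide with the paper's own proof, which splits $f$ into the same four Duhamel terms and treats them with Parseval, Young/Hardy--Littlewood--Sobolev, parabolic Calder\'on--Zygmund, and space--time Young. The genuine gap is in your justification of the central auxiliary estimate \eqref{aplinearLp}. After Minkowski in $x$ you are left with $\int_0^T\bigl(\int_{\mathbb R}|\partial_x^{1+l}\mathbf K(t,z)|\,\|\Delta_z\bar f_0\|_{L^p}\,dz\bigr)^p dt$, and ``Fubini in $(t,z)$'' does not apply because of the outer $p$-th power. The natural repair---Minkowski's integral inequality in $t$ combined with the time-integrated bound $\|\partial_x^{2}\mathbf K(\cdot,z)\|_{L^{6/5}_t}\lesssim|z|^{-4/3}$---only yields $\int_{\mathbb R}\|\Delta_z\bar f_0\|_{L^{6/5}}|z|^{-4/3}\,dz$, a $\dot B^{1/3}_{6/5,1}$-type quantity that is \emph{not} dominated by the $\dot W^{1/3,6/5}=\dot B^{1/3}_{6/5,6/5}$ seminorm in the lemma: the H\"older step needed to convert the $\ell^1$-in-$z$ expression into the $\ell^{6/5}$-in-$z$ one produces the divergent factor $\bigl(\int_{|z|\le 1}|z|^{-1}dz\bigr)^{1/6}$, and restricting to $T<1$ does not help since the obstruction sits at small $|z|$. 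The paper closes exactly this point by a weighted H\"older inequality in $z$ \emph{before} integrating in time, with the $t$-dependent weight $(1+|z|^2t^{-1})^{\gamma p}$, $\gamma=\frac{1+l}{2}$, so that the subsequent $t$-integration produces the homogeneous weight $|z|^{-(1+l)p+1}$; equivalently one may invoke a Littlewood--Paley/thermic characterization of $\dot B^{\sigma}_{p,p}$. Without such an argument the key bound $\|\partial_x f^{(1)}\|_{L^{6/5}_T}\lesssim\|\bar f_0\|_{\dot W^{1/3,6/5}}$ is not established. (A related slip: you state the auxiliary inequality for $\partial_x^{l}(\mathbf K\ast h)$ while using the exponent $(1+l)p-1$, which by scaling matches the kernel $\partial_x^{1+l}\mathbf K$; as written it is dimensionally inconsistent, although your application to $\partial_xf^{(1)}=\partial_x^{2}\mathbf K\ast\bar f_0$ is the intended, correct one.)

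A secondary gap: for the forcing term you invoke only same-exponent maximal regularity, but the lemma also requires the off-diagonal smoothing bounds $\|f^{(3)}\|_{L^2_T}\lesssim\|F\|_{L^{6/5}_T}$ and $\|f^{(3)}\|_{L^6_T}\lesssim\|F\|_{L^2_T}$, which are not maximal regularity; as in the paper they follow from Young in time with $\|\partial_x\mathbf K(\cdot,x)\|_{L^{3/2}_t}\lesssim|x|^{-2/3}$ followed by Hardy--Littlewood--Sobolev in $x$, and this should be said explicitly. The remaining pieces (initial data via Plancherel and the Strichartz-type bound, the $R$-term via space--time Young giving the factors $T^{1/4}$ and $T^{1/2}$) are handled essentially as in the paper.
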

	\begin{proof}
		The solution has formula 
		\begin{align*}
			f(t,x)=&\int_{\mathbb{R}} \partial_x\mathbf{K} (t,x-y) \bar f_0(y) dy+\int_{\mathbb{R}} \mathbf{K} (t,x-y) \tilde  f_0(y) dy\\
			&+\int_0^t \int_{\mathbb{R}} \partial_x \mathbf{K} (t-\tau,x-y)F(\tau,y) dyd\tau+ \int_0^t \int_{\mathbb{R}}  \mathbf{K} (t-\tau,x-y)R(\tau,y) dyd\tau\\
			:=&f_{L1}(t,x)+f_{L2}(t,x)+f_N(t,x)+f_R(t,x).
		\end{align*}
		We first estimate $f_{L1}$. By  Parseval’s identity  and Sobolev inequality, it is easy to check that  
		\begin{equation}\label{paraL2}
			\begin{aligned}
				&	\|f_{L1} \|_{L^2_{\infty}}^2=\int_0^\infty \int_{\mathbb{R}} |\xi|^2|e^{-|\xi|^2\tau}\mathcal{F}(\bar f_0)(\xi)|^2d\xi d\tau=\frac{1}{2}\int_{\mathbb{R}}| \mathcal{F}(\bar f_0)(\xi)|^2d\xi=\frac{1}{2}\|\bar f_0\|_{L^2}^2\lesssim \|\bar f_0\|_{\dot W^{\frac{1}{3},\frac{6}{5}}},\\
				&	\|\partial_x f_{L1} \|_{L^2_{\infty}}^2=\int_0^\infty \int_{\mathbb{R}} |\xi|^4|e^{-|\xi|^2\tau}\mathcal{F}(\bar f_0)(\xi)|^2d\xi d\tau=\frac{1}{2}\int_{\mathbb{R}}|\xi|^2| \mathcal{F}(\bar f_0)(\xi)|^2d\xi=\frac{1}{2}\|\partial_x\bar f_0\|_{L^2}^2.
			\end{aligned}
		\end{equation}
		Let $(l,p)=(1,\frac{6}{5})\ \text{or}\ (0,6)$.
		Note that 
		\begin{align*}
			\partial_x^lf_{L1}(t,x)=\int_{\mathbb{R}} \partial_x^{1+l}\mathbf{K} (t,x-y) (\bar f_0(y)-\bar f_0(x)) dy.
		\end{align*}
		Set $K_0(t,x)=|\partial_x^{1+l}\mathbf{K}(t,x)|$. Denote $\Delta_z g(x)=g(x)-g(x-z)$. Then 
		by a change of variable,
		\begin{align*}
			|\partial_x^lf_{L1}(t,x)|\lesssim \int_{\mathbb{R}} K_0(t,z) |\Delta_z\bar f_0(x)|dz.
		\end{align*}
		Then we have 
		\begin{align*}
			\|	\partial_x^lf_{L1}\|_{L^p_{T}}^p&=\int _0^\infty \int_{\mathbb{R}} \left(\int_{\mathbb{R}} K_0(t,z) |\Delta_z\bar f_0(x)|dz\right)^pdxdt\\
			&\lesssim \int _0^\infty \left(\int_{\mathbb{R}} K_0(t,z) \|\Delta_z \bar f_0\|_{L^p_x}dz\right)^pdt\\
			&\lesssim \int_0^\infty\left(\int_{\mathbb{R}} K_0(t,z)^\frac{p}{p-1}(1+|z|^2t^{-1})^\frac{\gamma p}{p-1}dz\right)^{p-1}\left(\int_{\mathbb{R}} \frac{\|\Delta_z\bar f_0\|_{L^p}^p}{(1+|z|^2t^{-1})^{\gamma p}}dz\right)dt,
		\end{align*}
		where we applied Minkowski's inequality in the first inequality, and H\"{o}lder's inequality in the second inequality.
		It is easy to check that 
		\begin{align*}
			\left(\int_{\mathbb{R}} K_0(t,z)^\frac{p}{p-1}(1+|z|^2t^{-1})^\frac{\gamma p}{p-1}dz\right)^{p-1}\lesssim t^{\frac{-(l+1)p-1}{2}},
		\end{align*}
		where we fix $\gamma=\frac{1+l}{2}$.
		Hence,
		\begin{align}
			\|	\partial_x ^lf_L(t,x)\|_{L^p_{T}} ^p&\lesssim \int_0^\infty t^{\frac{-(l+1)p-1}{2}}\left(\int_{\mathbb{R}} \frac{\|\Delta_z \bar f_0(x)\|_{L^p}^p}{(1+|z|^2t^{-1})^{\gamma p}}dz\right)dt\nonumber\\&\lesssim \int_{\mathbb{R}} \frac{\|\Delta_z \bar f_0(x)\|_{L^p}^p}{|z|^{(1+l)p-1}}dz=c \|\bar f_0\|_{\dot W^{\frac{p-2}{p}+l,p}}^p. \label{aplinearLp}
		\end{align}
		Hence we obtain that 
		\begin{equation}\label{e222}
			\begin{aligned}
				\| f_{L1}\|_{L^6_{T}}\lesssim  \|\bar f_0\|_{\dot W^{\frac{2}{3},6}}\lesssim \|\partial_x \bar f_0\|_{L^2},\ \ \ \ \ \ 
				\| \partial_xf_{L1}\|_{L^\frac{6}{5}_{\infty}}\lesssim & \|\bar f_0\|_{\dot W^{\frac{1}{3},\frac{6}{5}}}.
			\end{aligned}
		\end{equation}
		Combining this with \eqref{paraL2} and \eqref{e222}, one has 
		\begin{equation}\label{fL1}
			\begin{aligned}
				&\|f_{L1}\|_{L^2_{T}}+\|\partial_x f_{L1}\|_{L^\frac{6}{5}_{T}}\lesssim \|\bar f_0\|_{\dot W^{\frac{1}{3},\frac{6}{5}}},\\
				&	\|f_{L1}\|_{L^6_{T}}+\|\partial_x f_{L1}\|_{L^2_{T}}\lesssim \|\partial_x \bar f_0\|_{L^2}.
			\end{aligned}
		\end{equation}
		Then we estimate  $f_{L2}$. By Parseval’s identity, we obtain that 
		\begin{align*}
			&	\|\partial_x f_{L2}\|_{L^2_{T}}\lesssim \int_0^\infty \int_{\mathbb{R}} |\xi|^2|e^{-|\xi|^2\tau}\mathcal{F}(\tilde  f_0)(\xi)|^2d\xi d\tau=\frac{1}{2}\int_{\mathbb{R}}| \mathcal{F}(\tilde  f_0)(\xi)|^2d\xi=\frac{1}{2}\|\tilde  f_0\|_{L^2}^2.
		\end{align*}
		Moreover, by H\"{o}lder's inequality and Young's inequality,
		\begin{align*}
			&\|f_{L2}\|_{L^{2}_{T}}\lesssim \|\mathbf{K}\|_{L^2_{t,T}L^\frac{3}{2}_x}\|\tilde f_0\|_{L^\frac{6}{5}}\lesssim T^\frac{1}{3}\|\tilde f_0\|_{L^\frac{6}{5}},\\
			&\|f_{L2}\|_{L^{6}_{T}}\lesssim\left\|\int_{\mathbb{R}} \|\mathbf{K}(x-y)\|_{L^6_{t,T}}|\tilde f_0(y)|dy\right\|_{L^6_x}\lesssim \left\|\int_{\mathbb{R}} |\tilde f_0(y)|\frac{dy}{|x-y|^\frac{2}{3}}\right\|_{L^6_x}\lesssim  \|\tilde f_0\|_{L^2},\\
			&\|\partial_x f_{L2}\|_{L^\frac{6}{5}_{T}}\lesssim\|\partial_x \mathbf{K}\|_{L^\frac{6}{5}_{t,T}L^1_x}\|\tilde f_0\|_{L^\frac{6}{5}}\lesssim T^\frac{1}{3}\|\tilde f_0\|_{L^\frac{6}{5}}.
		\end{align*}
		Hence we obtain that 
		\begin{equation}\label{fL2}
			\begin{aligned}
				&\|f_{L2}\|_{L^2_{T}}+\|\partial_x f_{L2}\|_{L^\frac{6}{5}_{T}}\lesssim T^\frac{1}{3}\|\tilde  f_0\|_{L^\frac{6}{5}},\\	&\|f_{L2}\|_{L^6_{T}}+\|\partial_x f_{L2}\|_{L^2_{T}}\lesssim \|\tilde  f_0\|_{L^2}.
			\end{aligned}
		\end{equation}
		Then we estimate the forced term $f_N$. We can write
		$$
		\partial_t f_N-\partial_x^2 f_N =\partial_x F_0,\ \ \ \text{in}\ \mathbb{R}\times \mathbb{R}, \ \ \ \ \text{where}\ \ F_0=F\mathbf{1}_{t>0}.
		$$
		We take Fourier transform in $(t,x)$ and obtain that 
		\begin{align*}
			f_N=\mathcal{F}^{-1}_{t,x}\left(\frac{\xi\mathcal{F}_{T} ( F_0)(\tau,\xi)}{i\tau+\xi^2}\right),
		\end{align*}
		where we denote $\mathcal{F}_{t,x}$ the Fourier transform with respect to $(t,x)$, and $\mathcal{F}^{-1}_{T}$ its inverse. It  follows from the parabolic Calderon-Zygmund theory (see \cite{Stein}) that 
		$$
		\left\|\partial_x f_N\right\|_{L^p_{T}}\lesssim \|F_0\|_{L^p_{T}}\lesssim \|F\|_{L^p_{T}},\ \ \ \ p\in(1,\infty).
		$$
		On the other hand, we have 
		$
		\left|\partial_x\mathbf{K}( t,x)\right|\lesssim {(t^\frac{1}{2}+|x|)^{-2}}.
		$
		Then by Young's inequality,
		\begin{align*}
			&\left\|	\int_0^t \int_{\mathbb{R}}\partial_x\mathbf{K}( t-\tau,x-y)F(\tau,y) dyd\tau\right\|_{L^p_{T}}\\
			&\lesssim \left\|\int_{\mathbb{R}} \|\partial_x\mathbf{K}(\cdot ,x-y)\|_{L^\frac{3}{2}_{t,T}}\| F(y)\|_{L^\frac{3p}{3+p}_{t,T}}dy\right\|_{L^p_x}\lesssim \left\|\int_{\mathbb{R}}\| F(y)\|_{L^\frac{3p}{3+p}_{t,T}}\frac{dy}{|x-y|^\frac{2}{3}}\right\|_{L^p_x}\\
			&\lesssim \| F\|_{L^\frac{3p}{3+p}_{T}}.
		\end{align*} 
		Hence we obtain that for $l=0,1$,
		\begin{align}\label{N}
			\|\partial_x^l f_N\|_{L^p_{T}}=\|\int_0^t \int_{\mathbb{R}}\partial_x^{1+l}\mathbf{K}( t-\tau,x-y)F(\tau,y) dyd\tau\|_{L^p_{T}}\lesssim \| F\|_{L^\frac{3p}{3+(1-l)p}_{T}}.
		\end{align}
		Finally, we estimate $f_R$. By H\"{o}lder's inequality we have 
		$$
		\|\partial_x^l f_R\|_{L^p_{T}}\lesssim \|\partial_x ^l \mathbf{K}\|_{L^p_{T}}\|R\|_{L^1_{T}}\lesssim T^\frac{1}{4}\|R\|_{L^1_{T}},\ \ (l,p)\in\{(0,2),(1,6/5)\},$$
		and $$	\|\partial_x^l f_R\|_{L^p_{T}}\lesssim \|\partial_x ^l \mathbf{K}\|_{L^\frac{2p}{2+p}_{T}}\|R\|_{L^1_{T}}\lesssim T^\frac{1}{2} \|R\|_{L^2_{T}},\ \ (l,p)\in\{(0,6),(1,2)\}.
		$$
		Combining this with \eqref{fL1}, \eqref{fL2} and \eqref{N}, we finish the proof.
	\end{proof}\vspace{0.5cm}\\
	Consider the parabolic equation with jump coefficients,
	\begin{equation}\label{jueqle}
		\begin{aligned}
			&\partial_t f^\pm - c_\pm f_{xx}^\pm=F ,\ \ \ \text{in}\ (t,x)\in(0,T)\times \mathbb{R}^\pm,\\
			& f^+(t,0)=f^-(t,0),\quad\quad \ \ c_+\partial_x f^+(t,0)-c_-\partial_x f^-(t,0)=0,\ \ t\in(0,T),\\
			&f(0,x)=f_0(x),\ \ \ \ x\in\mathbb{R}.
		\end{aligned}
	\end{equation}
	\begin{lemma}\label{lemformula}
		Solution to \eqref{jueqle} has formula 
		$f=f^+\mathbf{1}_{x\geq 0}+f^-\mathbf{1}_{x< 0}$ , where  
		$$		f^\pm (t,x)=f_M^\pm(t,x)+f_B^\pm(t,x).
		$$
		with 
		\begin{align}
			f_M^\pm(t,x)=&\int_{\mathbb{R}^\pm}( \mathbf{K}(c_\pm t,x-y)-\mathbf{K}(c_\pm t,x+y))f_0(y) dy\nonumber\\
			&+\int_0^t \int_{\mathbb{R}^\pm}( \mathbf{K}(c_\pm (t-\tau),x-y)-\mathbf{K}(c_\pm (t-\tau),x+y)) F(\tau,y) dyd\tau,\label{forM}\\
			f_{B}^\pm(t,x)=&\frac{-2\sqrt{c_\pm}}{\sqrt{c_+}+\sqrt{c_-}}\int_0^t\mathbf{K}(c_\pm(t-\tau),x)(c_+\partial_xf_M^+(\tau,0)-c_-\partial_xf_M^-(\tau,0))d\tau.\label{forB}
		\end{align}
	\end{lemma}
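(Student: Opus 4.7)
The plan is a direct verification of the ansatz. I would split $f^\pm = f_M^\pm + f_B^\pm$, where $f_M^\pm$ is the Dirichlet half-line representation built by odd reflection across $x=0$ and $f_B^\pm$ is a single-layer heat potential whose sole role is to restore the flux-matching condition at $x=0$. Once both pieces are understood, the four conditions (PDE on each half-line, initial data, continuity, flux matching) are checked one at a time.

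First I would check that $f_M^\pm$ satisfies $\partial_t f_M^\pm - c_\pm\partial_x^2 f_M^\pm = F$ on $\mathbb{R}^\pm$, attains the initial datum $f_0$ on $\mathbb{R}^\pm$, and obeys $f_M^\pm(t,0)=0$. This is classical: the kernel $\mathbf{K}(c_\pm t,x-y)-\mathbf{K}(c_\pm t,x+y)$ is, up to the time rescaling by $c_\pm$, the Dirichlet Green's function of the heat operator on the half-line, and its vanishing at $x=0$ follows from the spatial evenness of $\mathbf{K}$. The initial datum is recovered since $\mathbf{K}(c_\pm t,x-y)\to\mathbf{Dirac}(x-y)$ as $t\to 0^+$ while the reflected copy drops out for $x,y\in\mathbb{R}^\pm$, and the forcing $F$ comes out by differentiation under the integral together with the identity $\int_0^t\partial_t\mathbf{K}(c_\pm(t-\tau),\cdot)d\tau = \mathbf{K}(c_\pm t,\cdot)-\mathbf{Dirac}$.

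Next, setting $\Phi(\tau):=c_+\partial_xf_M^+(\tau,0)-c_-\partial_xf_M^-(\tau,0)$, we have $f_B^\pm(t,x)=C_\pm\int_0^t \mathbf{K}(c_\pm(t-\tau),x)\Phi(\tau)d\tau$. For $x\in\mathbb{R}^\pm\setminus\{0\}$ the integrand is smooth in $\tau$ on $[0,t)$ and the singular end $\tau=t$ is harmless because $\mathbf{K}(0^+,x)=0$ whenever $x\neq 0$; differentiation under the integral therefore gives $\partial_t f_B^\pm = c_\pm\partial_x^2 f_B^\pm$ on $\mathbb{R}^\pm$, with $f_B^\pm(0,x)=0$. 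Continuity at $x=0$ reduces to $f_B^+(t,0)=f_B^-(t,0)$: using $\mathbf{K}(c_\pm(t-\tau),0)=(4\pi c_\pm(t-\tau))^{-1/2}$, one gets $f_B^\pm(t,0) = \tfrac{C_\pm}{2\sqrt{\pi c_\pm}}\int_0^t\Phi(\tau)(t-\tau)^{-1/2}d\tau$, and the definition of $C_\pm$ makes the prefactor $C_\pm/\sqrt{c_\pm}$ symmetric in the $\pm$ choice, so the two boundary values agree.

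The main obstacle is the flux-matching condition, which rests on the classical jump relation for the single-layer heat potential:
\[
\lim_{x\to 0^\pm}\partial_x\int_0^t\mathbf{K}(c(t-\tau),x)\mu(\tau)d\tau = \mp\frac{\mu(t)}{2c}.
\]
This cannot be read off by commuting the limit with the time integral, since $\partial_x\mathbf{K}(cs,0)=0$ pointwise in $s$; instead, the change of variables $u=x^2/(4c(t-\tau))$ concentrates the integrand near $\tau=t$ and turns the limit into the Gamma integral $\mu(t)\int_0^\infty u^{-1/2}e^{-u}du/(2c\sqrt{\pi}) = \mu(t)/(2c)$. Applying this jump relation to $f_B^\pm$ and combining with the definition of $\Phi$, the specific algebraic value of $C_\pm$ is precisely the one that cancels the contribution of $\Phi$ and yields $c_+\partial_xf^+(t,0)-c_-\partial_xf^-(t,0)=0$, completing the verification.
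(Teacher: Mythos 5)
Your overall plan coincides with the paper's proof: split $f^\pm=f_M^\pm+f_B^\pm$, recognize $f_M^\pm$ as the half-line Dirichlet solution built by reflection, treat $f_B^\pm$ as a single-layer heat potential, and reduce everything to the jump relation of that potential, proved by the change of variables that concentrates the time integral at $\tau=t$ (the paper declares the conditions you verify for $f_B^\pm$ ``trivial'' and only checks the Neumann matching). So there is no difference in method; the issue is in your final step.

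That last step is asserted rather than computed, and with your (correct) jump relation it does not close for the constants as printed in \eqref{forB}. Write $f_B^\pm=A_\pm\int_0^t\mathbf{K}(c_\pm(t-\tau),x)\Phi(\tau)\,d\tau$ with $\Phi(\tau)=c_+\partial_xf_M^+(\tau,0)-c_-\partial_xf_M^-(\tau,0)$. Your relation $\lim_{x\to0^\pm}\partial_x\int_0^t\mathbf{K}(c(t-\tau),x)\mu(\tau)\,d\tau=\mp\mu(t)/(2c)$ gives $c_+\partial_xf_B^+(t,0)-c_-\partial_xf_B^-(t,0)=-\tfrac12(A_++A_-)\Phi(t)$, so vanishing of the total flux forces $A_++A_-=2$; but \eqref{forB} has $A_\pm=-2\sqrt{c_\pm}/(\sqrt{c_+}+\sqrt{c_-})$, i.e.\ $A_++A_-=-2$, and the total flux jump comes out as $2\Phi(t)$, not $0$. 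A sanity check confirms the mismatch: for $c_+=c_-=c$, $F=0$, $f_0\equiv1$ one has $\Phi(\tau)=2\sqrt{c}/\sqrt{\pi\tau}$ and the printed formula gives $f(t,0)=f_B^+(t,0)=-1$ instead of $1$; likewise for $f_0=\mathbf{1}_{x<0}$ it gives a negative interface value, contradicting the maximum principle. The verification you describe succeeds only with the opposite prefactor $+2\sqrt{c_\pm}/(\sqrt{c_+}+\sqrt{c_-})$, which is also the sign implicitly used from \eqref{formubd3} onward and the one that reproduces the whole-line kernel when $c_+=c_-$; the paper's own check closes only because a minus sign is dropped in its change of variables, yielding $\lim_{x\to0^+}\int_0^t\partial_x\mathbf{K}(c_+(t-\tau),x)h(\tau)\,d\tau=h(t)/(2c_+)$, the sign-flipped version of your relation. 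So carry out the final algebra explicitly and state clearly that what you prove is the formula with the corrected sign (or flag the sign in \eqref{forB}); as written, your proposal claims a cancellation that the stated constants do not produce.
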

	\begin{proof}
		By superposition principle, we have $	f^\pm (t,x)=f_M^\pm(t,x)+f_B^\pm(t,x)$, where 
		$f_M^\pm$ solves the following equation on half space 
		\begin{align*}
			\begin{cases}
				&\partial_t f_M^+-c_+\partial_{x}^2f_M^+=F,\ \ \ \text{in}\ (0,T)\times \mathbb{R}^+,\\
				&f_M^+(0,x)=f_0(x),\ \ \ x\in\mathbb{R}^+,\\
				&f_M^+(t,0)=0,\ \ \ t\in(0,T).
			\end{cases}
			\begin{cases}
				&\partial_t f_M^--c_-\partial_{x}^2f_M^-=F,\ \ \ \text{in}\ (0,T)\times \mathbb{R}^-,\\
				&f_M^-(0,x)=f_0(x),\ \ \  x\in\mathbb{R}^-,\\
				&f_M^-(t,0)=0,\ \ \ t\in(0,T).
			\end{cases}
		\end{align*}
		And $f_B^\pm$ solves the system
		\begin{equation}\label{eqB}
			\begin{aligned}
				&\partial_t f_B^\pm-c_\pm\partial_{x}^2f_B^\pm=0,\ \ \ \text{in}\ (0,T)\times \mathbb{R}^\pm,\\
				&f_B^+(0,x)=0\ \text{in}\ \mathbb{R}^+,\ \ f_B^-(0,x)=0\ \text{in}\ \mathbb{R}^-,\\
				&f_B^+(t,0)=f_B^-(t,0),\ \ \ t\in(0,T),\\
				& c_+\partial_x f_B^+(t,0)-c_-\partial_x f_B^-(t,0)=-(c_+\partial_x f_M^+(t,0)-c_-\partial_x f_M^-(t,0)),\ \ \ t\in(0,T).
			\end{aligned}
		\end{equation}
		Indeed, integrate by parts we have 
		\begin{align*}
			&\int_0^t \int_{\mathbb{R}^+}(\mathbf{K}(c_+(t-\tau),x-y)-\mathbf{K}(c_+(t-\tau),x+y))F(\tau,y) dyd\tau\\
			&=\int_0^t \int_{\mathbb{R}^+}(\mathbf{K}(c_+(t-\tau),x-y)-\mathbf{K}(c_+(t-\tau),x+y))(\partial_t f_M^+-c_+\partial_x ^2 f_M^+)(\tau,y) dyd\tau\\
			&=f_M^+(t,x)-\int_{\mathbb{R}^+}(\mathbf{K}(c_+t,x-y)-\mathbf{K}(c_+t,x+y))f_0(y)dy\\
			&\quad\quad+\int_0^t \int_{\mathbb{R}^+}\partial_t(\mathbf{K}(c_+(t-\tau),x-y)-\mathbf{K}(c_+(t-\tau),x+y))f_M^+(\tau,y)dyd\tau\\
			&\quad\quad-c_+\int_0^t \int_{\mathbb{R}^+}\partial_x^2(\mathbf{K}(c_+(t-\tau),x-y)-\mathbf{K}(c_+(t-\tau),x+y))f_M^+(\tau,y)dyd\tau\\
			&=f_M^+(t,x)-\int_{\mathbb{R}^+}(\mathbf{K}(c_+t,x-y)-\mathbf{K}(c_+t,x+y))f_0(y)dy,
		\end{align*}
		where we use the fact that 
		$$
		\partial_t \mathbf{K}(c_+t,x)-c_+\partial_x^2\mathbf{K}(c_+t,x)=0,\ \ \forall \ t>0,x\in\mathbb{R}.
		$$
		We also get similar estimate for $f^-_M$. Hence we obtain formula \eqref{forM}. \\
		It remains to check that  formula \eqref{forB} satisfies equation \eqref{eqB}. For simplicity, we only check the Neumann boundary condition 
		\begin{align}\label{neubdhh}
			c_+\partial_x f_B^+(t,0)-c_-\partial_x f_B^-(t,0)=-(c_+\partial_x f_M^+(t,0)-c_-\partial_x f_M^-(t,0)):=h(t),\ \ \ t\in(0,T).	
		\end{align}
		And other conditions are trivial.
		Note that 
		\begin{equation}\label{neubd}
			\begin{aligned}
				c_+\partial_x f_B^+(t,0)-c_-\partial_x f_B^-(t,0)=&\frac{2c_+^\frac{3}{2}}{\sqrt{c_+}+\sqrt{c_-}}\lim_{x\to 0^+}\int_0^t\partial_x\mathbf{K}(c_+(t-\tau),x)h(\tau)d\tau\\
				&-\frac{2c_-^\frac{3}{2}}{\sqrt{c_+}+\sqrt{c_-}}\lim_{x\to 0^-}\int_0^t\partial_x\mathbf{K}(c_-(t-\tau),x)h(\tau)d\tau.
			\end{aligned}
		\end{equation}
		Note that the heat kernel has scaling $\mathbf{K}(t,x)=t^{-\frac{1}{2}}\mathbf{K}(1,\frac{x}{\sqrt{t}})$.
		\begin{align*}
			\int_0^t \partial_x  \mathbf{K}(c_+(t-\tau),x)h(\tau)d\tau&=-\int_0^t \frac{x}{2c_+\tau}K(c_+\tau,x)h(t-\tau)d\tau\\
			&=-\int_0^t \frac{x}{2(c_+\tau)^\frac{3}{2}}K(1,\frac{x}{\sqrt{c_+\tau}})h(t-\tau)d\tau.
		\end{align*}
		Let $\tilde \tau=\frac{x}{\sqrt{c_+\tau}}$, we obtain that 
		$$
		\int_0^t \partial_x  \mathbf{K}(c_+(t-\tau),x)h(\tau)d\tau=c_+^{-1}\int_{\frac{x}{\sqrt{c_+t}}}^\infty\mathbf{K}(1,\tilde \tau)h(t-\frac{x^2}{c_+\tilde\tau^2})d\tilde \tau.
		$$
		We can write 
		\begin{align*}
			\int_{\tilde t}^\infty\mathbf{K}(1,\tilde \tau)h(t-\frac{x^2}{c_+\tilde\tau^2})d\tilde \tau=\left(\int_{\tilde t^{1/2}}^\infty+\int_{\tilde t}^{\tilde t^\frac{1}{2}}\right)\mathbf{K}(1,\tilde \tau)h(t-\frac{x^2}{c_+\tilde\tau^2})d\tilde \tau,
		\end{align*}
		where we denote $\tilde t=\frac{x}{\sqrt{c_+t}}$.
		We have  $\tilde t\to 0$ when $x\to 0$, and the integral on $[\tilde t,\tilde t^\frac{1}{2}]$ tends to 0. Hence,
		$$
		\lim_{x\to 0^+}\int_0^t\partial_x\mathbf{K}(c_+(t-\tau),x)h(\tau)d\tau=c_+^{-1}\int_0^\infty \mathbf{K}(1,\tilde \tau)d\tilde \tau h(t)=\frac{1}{2c_+}h(t).
		$$
		Similarly, one has 
		\begin{align}\label{neubd2}
			\lim_{x\to 0^-}\int_0^t\partial_x\mathbf{K}(c_-(t-\tau),x)h(\tau)d\tau=-\frac{1}{2c_-}h(t).
		\end{align}
		Then we obtain \eqref{neubdhh} from \eqref{neubd}-\eqref{neubd2}.
		This completes the proof.
	\end{proof} \vspace{0.5cm}\\
	Consider the following parabolic equation with variable coefficient,
	\begin{equation}\label{conticoeff}
		\begin{aligned}
			&\partial_t f-\partial_x (\phi(x)f_x)=F,\quad\quad \ \ (t,x)\in (0,T)\times \mathbb{R},\\
			&	f(0,x)=f_0(x).
		\end{aligned}
	\end{equation}
	\begin{lemma}\label{forcont} the
		Solution $f$ to \eqref{conticoeff} satisfies
		\begin{align*}
			f(t,x)=&\int_{\mathbb{R}}\mathbf{H} (t,x-y,x)f_0(y)dy+\int_0^t\int_{\mathbb{R}} \mathbf{H} (t-\tau,x-y,x)F(\tau,y)dyd\tau\\
			&-\int_0^t \int_{\mathbb{R}} \partial_1 \mathbf{H}(t-\tau,x-y,x)(\phi(x)-\phi(y)) f_x(\tau,y)dyd\tau,
		\end{align*}
		where we denote $\mathbf{H} (t,x,y)=\mathbf{K}({ \phi(y)}{t},x)$,  $\partial_1\mathbf{H}(t,x,y)=\partial_x\mathbf{H}(t,x,y)$.
	\end{lemma}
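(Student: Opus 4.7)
\textit{Proof proposal.}
The plan is to freeze the coefficient at the evaluation point and apply the Duhamel formula for the resulting constant-coefficient heat equation, treating the mismatch as a source term.

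Fix an arbitrary point $x\in\mathbb{R}$ and set $c=\phi(x)$. Writing the equation in the $y$-variable and adding and subtracting $c\partial_y^2 f$, we decompose
\begin{equation*}
\partial_\tau f(\tau,y)-c\,\partial_y^2 f(\tau,y)=F(\tau,y)+\partial_y\bigl((\phi(y)-c)\partial_y f(\tau,y)\bigr),
\end{equation*}
since $\partial_y(\phi(y)\partial_y f)=c\,\partial_y^2 f+\partial_y((\phi(y)-c)\partial_y f)$. This is a constant-coefficient heat equation in $y$ with diffusivity $c$ and forcing $F+\partial_y((\phi(y)-c)\partial_y f)$.

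Next, I would apply Duhamel's formula with kernel $\mathbf{K}(c(t-\tau),\cdot)$ and evaluate the resulting integral at the frozen point $y\mapsto x$. Since $c=\phi(x)$, we have $\mathbf{K}(c(t-\tau),x-y)=\mathbf{K}(\phi(x)(t-\tau),x-y)=\mathbf{H}(t-\tau,x-y,x)$ by the definition of $\mathbf{H}$. This yields
\begin{equation*}
f(t,x)=\int_{\mathbb{R}}\mathbf{H}(t,x-y,x)f_0(y)\,dy+\int_0^t\int_{\mathbb{R}}\mathbf{H}(t-\tau,x-y,x)\bigl[F(\tau,y)+\partial_y((\phi(y)-c)\partial_y f(\tau,y))\bigr]dy\,d\tau.
\end{equation*}

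The last step is to integrate by parts in $y$ on the term involving $\partial_y((\phi(y)-c)\partial_yf)$. Assuming sufficient decay of $f$ and its derivatives at infinity (which can be justified by a standard approximation argument), the boundary terms vanish, and one obtains
\begin{equation*}
\int_{\mathbb{R}}\mathbf{H}(t-\tau,x-y,x)\partial_y\bigl((\phi(y)-c)\partial_y f\bigr)dy=-\int_{\mathbb{R}}\partial_y\mathbf{H}(t-\tau,x-y,x)(\phi(y)-\phi(x))\partial_y f(\tau,y)\,dy.
\end{equation*}
Using $\partial_y\mathbf{H}(t-\tau,x-y,x)=-\partial_1\mathbf{H}(t-\tau,x-y,x)$ (differentiating through the first slot picks up a sign) and $\phi(y)-\phi(x)=-(\phi(x)-\phi(y))$, the two signs cancel and the term becomes
\begin{equation*}
-\int_{\mathbb{R}}\partial_1\mathbf{H}(t-\tau,x-y,x)(\phi(x)-\phi(y))\partial_y f(\tau,y)\,dy,
\end{equation*}
which is exactly the last term in the stated formula.

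The only technical obstacle is justifying the integration by parts, since $\phi$ is only piecewise smooth (with jumps at the points $a_n$) and $\partial_y f$ can be singular at these points. I would handle this by first deriving the formula for smooth approximations $\phi^\epsilon$, smooth $f_0,F$ and smooth solutions $f^\epsilon$ on bounded intervals (splitting the $y$-integral into pieces separated by the jumps of $\phi$, integrating by parts on each piece, and observing that the interior jump contributions telescope because $\phi(y)-\phi(x)$ and $\partial_y f$ are the precisely the quantities whose jumps match through the transmission conditions), then passing to the limit. Since the statement is used only as a representation formula to set up the interior estimates in Lemma \ref{lemXTinte} and Lemma \ref{lemLPint}, it suffices to establish it for classical solutions in the smooth regime, which is standard.
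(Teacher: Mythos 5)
Your proposal is correct and follows essentially the same route as the paper: both freeze the coefficient at the evaluation point $x$ and exploit that $\mathbf{H}(\cdot,\cdot,x)=\mathbf{K}(\phi(x)\,\cdot,\cdot)$ solves the constant-coefficient heat equation, the paper verifying the formula by pairing the PDE with this kernel and integrating by parts, which is exactly the computation underlying your Duhamel step. Your sign bookkeeping for the commutator term and the remark on justifying the integration by parts are consistent with the paper's (formal) argument.
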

	\begin{proof}
		Integrate by parts we obtain 
		\begin{align*}
			&\int_0^t\int_{\mathbb{R}} \mathbf{H} (t-\tau,x-y,x)F(\tau,y)dyd\tau\\
			&=	\int_0^t\int_{\mathbb{R}} \mathbf{H} (t-\tau,x-y,x)(\partial_t f-\partial_x(\phi f_x))(\tau,y)dyd\tau\\
			&=f(t,x)-\int_{\mathbb{R}} \mathbf{H}(t,x-y,x)f_0(y) dy +\int_0^t \int_{\mathbb{R}} \partial_t \mathbf{H} (t-\tau,x-y,x) f(\tau,y)dyd\tau\\
			&\ \ \ +\int_0^t \int_{\mathbb{R}} \partial_1 \mathbf{H}(t-\tau,x-y,x)(\phi(x)-\phi(y)) f_x(\tau,y)dyd\tau\\
			&\ \ \ -\phi(x)\int_0^t \int_{\mathbb{R}} \partial_1^2 \mathbf{H} (t-\tau,x-y,x) f(\tau,y)dyd\tau.
		\end{align*}
		It is easy to check that 
		$$
		\partial_t \mathbf{H} (t-\tau,x-y,x)-\phi(x)\partial_1^2 \mathbf{H} (t-\tau,x-y,x)=0.
		$$
		Then we obtain the result.
	\end{proof}
	\vspace{0.5cm}\\
	\textbf{Acknowledgements:} 
	Q.H.N.  is supported by the Academy of Mathematics and Systems Science, Chinese Academy of Sciences startup fund, and the National Natural Science Foundation of China (No. 12050410257 and No. 12288201) and  the National Key R$\&$D Program of China under grant 2021YFA1000800. 

\end{document}